\numberwithin{equation}{section}
\newcommand{\define}{\textbf}
\newcommand{\mc}{\mathcal}
\newcommand{\ra}{\Rightarrow}
\newcommand{\vp}{\varphi}
\newcommand{\ab}[1]{\langle#1\rangle} 
\newcommand{\lrb}[1]{\left[#1\right]}
\newcommand{\lrp}[1]{\left(#1\right)}
\newcommand{\norm}[1]{\|#1\|}
\newcommand{\set}[1]{\{#1\}}
\newcommand{\suppressthis}[1]{}
\newcommand{\reason}[2]{\stackrel{\text{#1}}{#2}}
\newcommand{\R}{\mathbb R}
\newcommand{\N}{\mathbb N}
\newcommand{\Q}{\mathbb Q}
\newcommand{\T}{\mathbb T}
\newcommand{\Z}{\mathbb Z}
\newcommand{\C}{\mathbb C}
\newtheorem{theorem}{Theorem}[section]
\newtheorem{lemma}[theorem]{Lemma}
\newtheorem{corollary}[theorem]{Corollary}
\theoremstyle{definition}
\newtheorem{definition}{Definition}[section]
\newtheorem{remark}[theorem]{Remark}
\theoremstyle{definition}
\newtheorem{conjecture}{Conjecture}
\DeclareMathOperator{\supp}{supp}
\newcommand{\one}{\mathbf 1}
\begin{document}

\title{A Periodicity Result for Tilings of $\Z^3$ by Clusters of Prime-Squared Cardinality}
\author{Abhishek Khetan}
\address{Visiting Professor, Department of Computer Science, Ashoka University, Sonipat, Haryana, India}

\email{abhishek.khetan@ashoka.edu.in}

\subjclass[2010]{37A15, 37B10, 52C20}

\keywords{tilings, $1$-periodic, $2$-periodic, weakly periodic, $2$-weakly periodic}

\date{\today}

\begin{abstract}
	We show that if $\Z^3$ can be tiled by translated copies of a set $F\subseteq\Z^3$ of cardinality the square of a prime then there is a weakly periodic $F$-tiling of $\Z^3$, that is, there is a tiling $T$ of $\Z^3$ by translates of $F$ such that $T$ can be partitioned into finitely many $1$-periodic sets.
\end{abstract}

\maketitle

\tableofcontents


\section{Introduction}

Let $F$ be a finite subset of $\Z^n$, which we will refer to as a \emph{cluster}.
A \emph{tiling} of $\Z^n$ by translates of $F$, or an $F$-\emph{tiling}, is, roughly, a covering of $\Z^n$ by non-overlapping translates of $F$.
The \emph{periodic tiling conjecture} (See \cite{lagarias_wang_tiling}) states that if there is an $F$-tiling of $\Z^n$ for some cluster $F$, then there is an $F$-tiling which is invariant under translations by a finite index subgroup of $\Z^n$, that is, there is a \emph{fully periodic} $F$-tiling.
It was proved in \cite{beauquier_nivat_1991} that if $F$ is a \emph{connected} cluster (which roughly means that there are no `gaps' in $F$) in $\Z^2$ then every $F$-tiling is periodic in at least one direction.
This is sufficient to establish the existence of a fully periodic tiling by a simple pigeon-hole argument.
Important progress towards the periodic tiling conjecture was made in \cite{szegedy_algorithms_to_tile} in which it was proved that the periodic tiling conjecture holds (in any dimension) for clusters of prime cardinality.
Further major progress was made in \cite{bhattacharya_tilings} where it was established that the periodic tiling conjecture holds in $\Z^2$, without any constraint on the cardinality or the geometry of the cluster.
The main result of \cite{bhattacharya_tilings} shows that the \emph{orbit closure} of any $F$-tiling, where $F$ is a cluster in $\Z^2$, has a tiling which satisfies a certain weak notion of periodicity, which was shown to be sufficient to guarantee the existence of a fully periodic tiling.
To make this precise, first we define a subset $S$ of $\Z^2$ as \emph{$1$-periodic} if there is a nonzero vector $g$ in $\Z^2$ such that $g+S=S$.
We say that a subset $S$ of $\Z^2$ is \emph{weakly periodic} if $S$ can be partitioned in to finitely many $1$-periodic subsets.
Bhattacharya \cite{bhattacharya_tilings} shows that the orbit closure of any $F$-tiling has a weakly periodic tiling in it.
A stronger version of this result was obtained in \cite{greenfeld_tao_2020} which says that \emph{every} $F$-tiling of a cluster in $\Z^2$ is weakly periodic, thereby removing the need to pass to the orbit closure.

It was recently shown in \cite{greenfel_tao_counterexample_ptc} that the periodic tiling conjecture is false in $\Z^n$ if $n$ is sufficiently large.
However, we conjecture that the following weaker statement might still hold.

\begin{conjecture}
	\textit{\textbf{Weakly Periodic Tiling Conjecture.}}
	Let $F\subseteq\Z^n$ be an exact cluster.
	Then there is a weakly periodic $F$-tiling.
	More precisely, there exists an $F$-tiling $T\subseteq\Z^n$ such that there is a finite partition $T = T_1\sqcup \cdots \sqcup T_k$ such that each $T_i$ is $1$-periodic.
\end{conjecture}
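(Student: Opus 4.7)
The conjecture is stated in full generality and is currently known only in special cases: $d=2$ by \cite{bhattacharya_tilings, greenfeld_tao_2020}, prime cardinality in arbitrary dimension by \cite{szegedy_algorithms_to_tile}, and the case $d=3$ with $|F|=p^2$ established in the present paper. The plan is to pursue the dynamical--Fourier strategy of \cite{bhattacharya_tilings}. First, fix any $F$-tiling $T$ and encode it by a translation set $A\subseteq\Z^d$ satisfying $\one_F*\one_A=\one_{\Z^d}$. Form the orbit closure $X\subseteq\{0,1\}^{\Z^d}$ of $A$ under $\Z^d$-translations; this is a compact shift-invariant system, and the task becomes to exhibit a point $A'\in X$ such that the tiling it encodes admits a partition into finitely many $1$-periodic subsets.

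Second, I would pass to the Pontryagin dual. For any translation-invariant probability measure $\mu$ on $X$, the Fourier transform $\hat\mu$ is supported on the union of the origin with the zero set $Z(\hat{\one_F})\subseteq\T^d$ of the mask polynomial $\hat{\one_F}$. Rational characters in $\supp\hat\mu$ would contribute genuine $1$-periodic components of the tiling in the corresponding direction, whereas irrational characters obstruct periodicity. The problem therefore reduces to showing that, for some $\mu$ arising from a point of $X$, the spectral support consists of finitely many rational points of $\T^d$; the weakly periodic decomposition of $A'$ then follows by expressing $\one_{A'}$ as a finite sum of indicators of $1$-periodic sets.

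Third, the main obstacle is classifying the zero set $Z(\hat{\one_F})$. When $|F|$ is prime, Galois-theoretic rigidity of the $p$-th roots of unity forces the characters in $\supp\hat\mu$ to lie on rational subtori. For $|F|=p^2$ the divisibility relations only constrain the relevant character orders to divide $p^2$, so the list of candidate subtori is still finite, and the restriction $d=3$ is useful because sublattices of $\Z^3$ of index $p$ or $p^2$ admit an explicit classification, which permits a case analysis on the periodicity directions and their interactions. In full generality, however, $Z(\hat{\one_F})$ can contain irrational algebraic components with no rigidity from below; ruling out their contribution to the spectral measure of some tiling in $X$ is where I expect the argument to break, and this step will likely require input beyond Fourier analysis---perhaps ergodic-theoretic rigidity of $\Z^d$-actions or diophantine properties of the mask's zero set.
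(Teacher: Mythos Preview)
This statement is a conjecture, not a theorem; the paper does not prove it and establishes only the special case $d=3$, $|F|=p^2$ (Theorem~\ref{theorem:prime square theorem}). So there is no proof to compare against, and your write-up is correctly framed as a strategy. That said, two of your reductions misidentify the mechanism that actually drives the known cases, and getting these right is essential to seeing where the real obstruction lies.

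First, the target for the spectral support is not ``finitely many rational points of $\T^d$.'' A rational point corresponds to a finite-order character and would force full $d$-periodicity---stronger than needed and not what the dilation--averaging argument produces. What Lemma~\ref{lemma:first spectral support lemma} gives is support in a finite union of \emph{codimension-one} subtori $\ker\chi_h$; the hard step in $d=3$ is to push this down to a finite union of \emph{one-dimensional} affine subtori. Weak periodicity then follows not from rationality of points but from the polynomial-sequence and Weyl-equidistribution machinery of Appendix~\ref{section:bhattachayas proof in three dimension}. Second, the $|F|=p^2$ case is not handled by classifying sublattices of $\Z^3$ of index $p$ or $p^2$. The operative dichotomy (Lemma~\ref{lemma:second spectral support lemma}) is geometric: for each $h\in\Delta$, either every line parallel to $h$ meets $F$ in a multiple of $p$ points, or $\ker\chi_h\cap Z$ already sits on finitely many lines. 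If the first alternative holds for two independent directions, $F$ is a prism with prime-size foundation (Lemma~\ref{lemma:first combinatorial lemma}) and a $2$-periodic tiling exists directly; otherwise the spectral support drops to lines. The prime-power hypothesis enters only through the cyclotomic divisibility Lemma~\ref{lemma:cyclotomy divisibility lemma}. For general $d$ and $|F|$ the obstruction sits one step earlier than the ``irrational algebraic components'' you flag: there is currently no mechanism to force the spectral support from codimension-one subtori down to one-dimensional ones without arithmetic leverage of the kind prime-power cardinality provides.
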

In this paper we show that if $F\subseteq \Z^3$ is a cluster of cardinality the square of a prime such that there exists an $F$-tiling of $\Z^3$, then there exists a weakly periodic $F$-tiling of $\Z^3$ (See Theorem \ref{theorem:prime square theorem}).
This makes some progress towards the above conjecture beyond the already known result of Szegedy \cite{szegedy_algorithms_to_tile}.
Unlike in two dimensions, the existence of a weakly periodic tiling alone does not seem (to the author) to imply the existence of a fully periodic tiling.
In fact, to upgrade from a $1$-periodic tiling of $\Z^3$ to a fully-periodic $F$-tiling also seems hard.
The difficulty is genuine, since the problem of this upgradation is a close cousin of the periodic tiling conjecture for the group $\Z^2\times (\Z/N\Z)$ which, as noted in \cite{greenfeld_tao_undecidable_trans}, is as yet an unresolved problem.


%
%
%

%
\subsection{Overview of the Proof}

In \cite{bhattacharya_tilings} the problem of finding a fully periodic $F$-tiling for a cluster $F\subseteq \Z^2$ was first reduced to showing the existence of a weakly periodic $F$-tiling by an elementary combinatorial argument.
The existence of a weakly periodic $F$-tiling was then shown to follow from the following dynamical statement: If $\Z^2$ acts ergodically on a probability space $(X, \mu)$ and $A$ is a subset of $X$ such that finitely many $\Z^2$-translates of $A$ partition $X$, then $A$ itself is weakly periodic.
To approach this problem, Bhattacharya uses the spectral theorem to transfer the problem to $L^2(\T^2, \nu)$, where $\nu$ is the spectral measure associated with the characteristic function of $A$.
Then using a dilation lemma and an averaging argument, it was shown that the spectral measure is supported on finitely many $1$-dimensional affine subtori of $\T^2$.
Then the averages of $1_A$ along the `directions' of these subtori (when $\T^2$ is thought of as $\R^2/\Z^2$, each $1$-dimensional subtorus can be thought of as a `line' and hence has a direction) were shown to behave polynomially in $\R/\Z$.
Since a polynomial in $\R/\Z$ is either periodic or equidistributes, the space $(X, \mu)$ gets partitioned into ergodic components of a finite index subgroup of $\Z^2$, such that on each ergodic component, the averages either `equidistribute' or are constant.
The rest of the proof goes by carefully dealing with the equidistribution case.

Thus a key step in the argument was to show that the spectral measure is supported on a `thin' subset of $\T^2$.
We take this as a cue and study the problem for a cluster $F\subseteq \Z^3$.
If the corresponding spectral measure again happens to be supported on finitely many lines then Bhattacharya's proof goes through \emph{mutatis mutandis}.
In the adverse case the spectral measure could be supported on `planes.'
To deal with these adverse cases we use the hypothesis that the size of the cluster is the square of a prime.
This is done by Lemma \ref{lemma:spectral support lemma used in case two} which places a geometric constraint on a cluster of prime-power cardinality in the adverse cases.

\subsection{Organization of the Paper}

In Section \ref{section:preliminaries} we discuss the basic definitions and the dynamical formulation of the problem.
Section \ref{section:preparatory results} collects lemmas needed for the proof of the periodicity result.
The results of this section may be read only when needed.
In Section \ref{section:the periodicity result} we prove the main result.
The part of the proof where Bhattacharya's ideas go through by appropriate modification is collected in Section \ref{section:bhattachayas proof in three dimension}.

\subsection{Acknowledgements}
The author would like to thank Siddhartha Bhattacharya, Ankit Rai, Mohit Upmanyu, Nishant Chandgotia, Etienne Mutot, and Pierre Guillon for helpful conversations and their encouragement.
Thanks are due to Prof. Jaikumar Radhakrishnan for helpful suggestions and comments.
This work was carried out while the author was a postdoctoral researcher at the Centre for Applicable Mathematics, Tata Institute of Fundamental Research (TIFR CAM), Bangalore, whose support and hospitality are gratefully acknowledged.

%
%
%
%

%
\section{Preliminaries}
\label{section:preliminaries}

\subsection{Tilings and Periodicity}

\begin{definition}
	\label{definition:tiling}
	A finite subset of $\Z^n$ will be referred to as a \define{cluster}.
	Given a cluster $F\subseteq\Z^n$ we say that $T\subseteq \Z^n$ is an $F$-tiling if for each $p\in \Z^n$ there exist unique $a\in F$ and $t\in T$ such that $p=a+t$.
	In other words, $T$ is an $F$-tiling if and only if $\Z^n$ can be partitioned by $T$-translates of $F$.
	We say that $F$ is \define{exact} if there exists an $F$-tiling.
\end{definition}

\begin{definition}
	\label{definition:periodcity}
	Let $S$ be a subset of $\Z^n$. 
	We say that $S$ is \define{$i$-periodic} if there exists a rank-$i$ subgroup $\Lambda$ of $\Z^n$ such that $S$ is invariant under $\Lambda$, that is, $g+S=S$ for all $g\in \Lambda$.
\end{definition}

\begin{definition}
	\label{definition:weak periodicity}
	A subset $S$ of $\Z^n$ is called \define{$1$-weakly periodic}, or simply \define{weakly periodic} if there exist finitely many $1$-periodic subsets $S_1, \ldots, S_k$ such that $S=S_1\sqcup \cdots \sqcup S_k$.

	A subset $S$ of $\Z^n$ is called \define{$2$-weakly periodic} if there exist finitely many $2$-periodic subsets $S_1, \ldots, S_k$ such that $S=S_1\sqcup \cdots \sqcup S_k$.
\end{definition}

We will use the following results about tilings of $\Z$ and $\Z^2$.

\begin{theorem}
	\label{theorem:uniform periodicity of one dimensional tilings}
	Let $F\subseteq \Z$ be an exact cluster.
	Then there is a positive integer $n$ such that $n+T=T$ for all $F$-tilings $T$.
	In other words, every tiling of $\Z$ is $1$-periodic.
\end{theorem}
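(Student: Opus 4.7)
The plan is to reduce the problem to a pigeonhole argument on bounded-size local configurations of the tiling, producing a period bounded purely in terms of $F$; the uniform $n$ is then obtained by taking a suitable $\lcm$. After translating $F$ I may assume $F\subseteq\{0,1,\dots,d-1\}$ with $0,d-1\in F$, where $d:=\max F-\min F+1$. For a tiling $T$ and each $m\in\Z$, I would consider the window $W_m(T):=(T-m)\cap\{0,1,\dots,d-1\}$, a subset of a $d$-element set, so $W_m(T)$ takes at most $2^d$ values as $m$ varies.

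The crux is the claim that $W_m(T)$ determines $T$ entirely. For forward determination: the point $m+d$ is covered by a unique pair $(t,f)\in T\times F$ with $t+f=m+d$, and since $F\subseteq[0,d-1]$ this forces $t\in[m+1,m+d]$. Using $0\in F$, I would show that $m+d\in T$ if and only if no $t\in T\cap[m+1,m+d-1]$ satisfies $m+d-t\in F$; induction then recovers $T\cap[m,\infty)$ from $W_m(T)$. For backward determination the trick is to use the ``other endpoint'' $d-1\in F$: examine the covering of $m+d-2$, which lies in the window, and observe that $m-1\in T$ (which would cover $m+d-2$ via $f=d-1$) if and only if no $t\in T\cap[m,m+d-2]$ satisfies $m+d-2-t\in F$. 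Induction then recovers $T\cap(-\infty,m+d-1]$, and hence all of $T$.

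Once reconstructibility is established, the rest is immediate. Among the $2^d+1$ windows $W_0(T),W_1(T),\dots,W_{2^d}(T)$, two must coincide by pigeonhole, say $W_{m_1}(T)=W_{m_2}(T)$ with $0\le m_1<m_2\le 2^d$. Reconstructibility of $T$ from either window then forces $T-m_1=T-m_2$, i.e.\ $T+(m_2-m_1)=T$, so $T$ has some period $p_T\le 2^d$. Setting $n:=\lcm(1,2,\dots,2^d)$, every such $p_T$ divides $n$, so $n+T=T$ holds for every $F$-tiling $T$, yielding the desired uniform period.

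The main obstacle, such as it is, is the backward-determination step: forward reconstruction using $0\in F$ is essentially transparent, but running the same idea in reverse requires one to exploit the endpoint $d-1\in F$ and to look at the covering of the point $m+d-2$ rather than of $m-1$ itself. Everything else in the argument is routine pigeonhole.
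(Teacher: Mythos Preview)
Your proof is correct and follows essentially the same approach as the paper's: the key observation that knowledge of $T$ on an interval of length about $\operatorname{diam}(F)$ determines $T$ entirely, followed by a pigeonhole argument on windows. Your write-up is simply a fleshed-out version of the paper's two-line sketch, including the explicit two-sided reconstruction (using $0\in F$ forward and $d-1\in F$ backward) and the final $\lcm$ to obtain a uniform period.
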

\begin{proof}
	Note that if $T|_{[a, b]}$ is known for some interval $[a, b]$ of length exceeding $\text{diam}(F)$, then $T$ is determined entirely.
	Now the assertion follows by a pigeonhole argument.
\end{proof}

\begin{theorem}
	\label{theorem:uniform periodicity of prime cardinality clusters in two dimensions}
	\emph{\cite{szegedy_algorithms_to_tile}, \cite[Example 4]{kari_szabados_alg_geom}}
	Let $F\subseteq\Z^2$ be an exact cluster of prime cardinality such that the affine span of $F$ has rank $2$.
	Then there is a finite index subgroup $\Lambda$ of $\Z^2$ such that every $F$-tiling is $\Lambda$-invariant.
\end{theorem}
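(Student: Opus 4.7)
The natural approach is via the mask polynomial. Associate to $F$ the Laurent polynomial $M_F(x,y) = \sum_{(a,b)\in F} x^a y^b$, and to an $F$-tiling $T$ the formal Laurent series $M_T(x,y) = \sum_{(a,b)\in T} x^a y^b$. The condition $F\oplus T = \Z^2$ is equivalent to the formal identity $M_F(x,y)\,M_T(x,y) = \sum_{(a,b)\in\Z^2} x^a y^b$. Reducing modulo the prime $p=|F|$, the Frobenius endomorphism yields
\[
M_F(x,y)^p \equiv M_F(x^p,y^p) \pmod{p},
\]
which is the engine behind Szegedy's periodic tiling result for clusters of prime cardinality.

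From here I would follow the strategy of \cite{szegedy_algorithms_to_tile}: iterate the Frobenius identity together with the tiling relation to produce, mod $p$, an annihilator of $M_T$ that is built entirely out of $M_F$ and $p$. Unpacking this annihilator shows that $M_T$ must be invariant under translation by every vector in some sublattice $\Lambda_0\subseteq\Z^2$ that is determined solely by $F$. A short lifting argument (passing from $\mathbb{F}_p$ back to $\Z$, or equivalently eliminating the $p$-torsion in the coefficient ring) upgrades this to an honest integer lattice $\Lambda$ whose translations fix every $F$-tiling.

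The rank-$2$ hypothesis on the affine span of $F$ enters at the last step to guarantee that $\Lambda$ has rank $2$ and hence finite index. If the affine span had rank $1$, the mask would be essentially a single-variable Laurent polynomial and the argument would only produce a rank-$1$ sublattice, matching the well-known fact that in a tiling by a one-dimensional cluster the transverse ``phase'' of each line can be chosen freely. Under the rank-$2$ assumption, both variables of $M_F$ participate nontrivially in the Frobenius computation, so $\Lambda$ must itself have rank $2$.

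The main obstacle is the uniformity: we need a single $\Lambda$ that works simultaneously for every $F$-tiling, not a $\Lambda_T$ that a priori depends on $T$. This is exactly where the alternative algebraic-geometric approach of Kari and Szabados \cite{kari_szabados_alg_geom} is most transparent; in their language the tiling condition is the vanishing of $M_T$ along the zero locus of $M_F$ over $\overline{\mathbb{F}_p}$, and the primary decomposition of the ideal $(M_F)$ gives a lattice of periods extracted directly from $F$. In either framework, the output lattice $\Lambda$ depends only on $F$ and $p$, so uniform periodicity follows.
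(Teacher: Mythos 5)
The paper does not prove this theorem; it cites Szegedy and Kari--Szabados and uses it as a black box (it feeds into Lemma \ref{lemma:tiling by a prism with prime base}). So there is no in-paper argument for you to match. Judged on its own, your write-up identifies the right toolkit --- the mask polynomial $M_F$, the Frobenius congruence $M_F(x,y)^p \equiv M_F(x^p,y^p) \pmod p$, and the algebraic-geometric alternative of Kari--Szabados --- which is exactly the circle of ideas behind the cited proofs and behind the paper's own Dilation Lemma. To that extent the plan of attack is correct.

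However, this is a roadmap and not a proof. Every step beyond writing the Frobenius identity is asserted rather than carried out: ``iterate the Frobenius identity together with the tiling relation to produce, mod $p$, an annihilator of $M_T$,'' ``unpacking this annihilator shows that $M_T$ must be invariant under translation by every vector in some sublattice $\Lambda_0$,'' and ``a short lifting argument upgrades this to an honest integer lattice.'' Those three sentences \emph{are} the theorem; you never exhibit the annihilator, never exhibit $\Lambda$, never show it has rank $2$, and never show it is independent of the particular tiling $T$. The rank-$2$ hypothesis on the affine span of $F$ is the one place you claim a conceptual explanation, but the explanation is only a heuristic (``both variables of $M_F$ participate nontrivially''); what actually has to be shown is that the Frobenius computation yields two linearly independent period vectors, and that step requires a concrete calculation you have not done. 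There is also a technical point glossed over: $M_T$ is an infinite formal sum, and the ``formal identity'' $M_F \cdot M_T = \sum_{(a,b)\in\Z^2} x^a y^b$ is not a statement in any ring of Laurent series; one must either work locally (each monomial's coefficient on the left is $1$), or work with finite truncations, or pass to a completion, and the Frobenius argument has to be run compatibly with whatever formalism you choose. As written, the proposal confirms you know where to look but does not constitute a proof of the statement.
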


\begin{theorem}
	\emph{\cite{bhattacharya_tilings,greenfeld_tao_2020}}
	\label{theorem:existence of biperiodic tilings in two dimensions}
	Let $F\subseteq\Z^2$ be an exact cluster, then there is a $2$-periodic $F$-tiling.
\end{theorem}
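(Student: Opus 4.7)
The plan is to prove the theorem in two stages: first, reduce the statement to the existence of a weakly periodic $F$-tiling; second, establish the existence of such a tiling by dynamical and spectral methods.

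For the reduction, suppose $T = T_1 \sqcup \cdots \sqcup T_k$ is a weakly periodic $F$-tiling, where $T_i$ is invariant under a nonzero $g_i \in \Z^2$. If the vectors $g_1, \ldots, g_k$ span a rank-$2$ sublattice of $\Z^2$, then $\Lambda = \bigcap_{i} \Z g_i$ is a finite-index subgroup of $\Z^2$ preserving every $T_i$, so $T$ itself is $2$-periodic. Otherwise all $g_i$ lie on a single rational line; letting $g$ be a primitive generator of this line intersected with $\Z^2$, the tiling $T$ is already $g$-periodic. Projecting modulo $\Z g$ reduces the problem to a one-dimensional tiling problem, which is $1$-periodic by Theorem \ref{theorem:uniform periodicity of one dimensional tilings}; pulling back produces a $2$-periodic $F$-tiling of $\Z^2$.

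For the existence of a weakly periodic tiling, let $X \subseteq \{0,1\}^{\Z^2}$ be the orbit closure of an $F$-tiling under shifts, and fix an ergodic $\Z^2$-invariant probability measure $\mu$ on $X$. Define $A = \{T' \in X : 0 \in T'\}$; the tiling identity $\one_T * \one_F = \one_{\Z^2}$ translates to $\sum_{a \in F} \sigma_{-a}\one_A = 1$ $\mu$-almost everywhere. By the spectral theorem, the spectral measure $\nu$ of $\one_A$ on $\widehat{\Z^2} \cong \T^2$ is supported on $\{\xi \in \T^2 : \widehat{\one_F}(\xi) = 0\} \cup \{0\}$. A dilation lemma --- exploiting multiplicative identities for $\widehat{\one_F}$ along rational directions --- then shows that $\nu$ is in fact concentrated on finitely many $1$-dimensional affine subtori of $\T^2$.

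The final step analyzes the conditional averages of $\one_A$ along the directions of these subtori. Each such average is a polynomial-type function on $\R/\Z$; by the dichotomy that such a function is either purely periodic or equidistributes, the space $X$ decomposes into finitely many ergodic components of a finite-index sublattice, on each of which $\one_A$ admits a tractable structure. Patching these components yields weak periodicity of $A$, and therefore of a $\mu$-generic tiling $T' \in X$. The main obstacle is the spectral support step: transforming the algebraic tiling equation into the geometric statement that $\supp \nu$ is one-dimensional requires carefully extracting rational structure from the vanishing of character sums over $F$. This step exploits that $\Z^2$ has rank two in an essential way, which is precisely why the three-dimensional case treated later in this paper requires a new geometric input.
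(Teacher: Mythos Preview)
The paper does not prove this theorem; it is stated as a known result with citations to \cite{bhattacharya_tilings} and \cite{greenfeld_tao_2020}. Your sketch attempts to outline the argument from those references, and the dynamical/spectral portion is a reasonable high-level summary of Bhattacharya's method---much of which the present paper reworks in detail for $\Z^3$ in Appendix~\ref{section:bhattachayas proof in three dimension}.

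Your reduction step, however, contains a genuine error. In the case where the periods $g_1, \ldots, g_k$ span a rank-$2$ sublattice, you assert that $\Lambda = \bigcap_i \Z g_i$ is a finite-index subgroup of $\Z^2$ preserving every $T_i$. But each $\Z g_i$ has rank~$1$, so as soon as two of the $g_i$ are linearly independent the intersection $\bigcap_i \Z g_i$ is $\{0\}$, not a finite-index subgroup. A weakly periodic set whose pieces have linearly independent directions of periodicity is not automatically $2$-periodic, and there is no common nonzero period as you claim. The actual upgrade from weakly periodic to fully periodic in $\Z^2$ uses that $T$ is a \emph{tiling}, not merely an abstract disjoint union of $1$-periodic sets, and proceeds by a different (still elementary) combinatorial argument. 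Your Case~2 and the spectral outline are fine in spirit, but Case~1 as written does not go through.
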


\subsection{Dynamical Formulation}

In \cite{bhattacharya_tilings} Bhattacharya proved the periodic tiling conjecture (See \cite{lagarias_wang_tiling}) in two dimensions by developing a dynamical statement which allows one to transfer the problem to an ergodic theoretic setting.
We discuss the relevant definitions and state the dynamical formulation here which we will use later.
The formulation will be given for $\Z^3$.

Let $X$ be a subshift\footnote{There is a natural action of $\Z^3$ on $\set{0, 1}^{\Z^3}$ by translations.
	A \define{subshift} of $\set{0, 1}^{\Z^3}$ is any closed $\Z^3$-invariant subset.} of $\set{0, 1}^{\Z^3}$ and $\mu$ be a $\Z^3$-invariant probability measure on $X$.
The action of $\Z^3$ on $X$ naturally leads to a unitary action of $\Z^3$ on $L^2(X, \mu)$ as follows.
For $\vp\in L^2(X, \mu)$ and $g\in \Z^3$ we define $g\cdot \vp$ by
\begin{equation}
	(g\cdot \vp)(x) = \vp(g^{-1}\cdot x) = \vp((-g)\cdot x)
\end{equation}
The action of $\Z^3$ on $L^2(X, \mu)$ can be extended to an action of $\Z[u_1^{\pm}, u_2^{\pm}, u_3^{\pm}]$--- the ring of Laurent polynomials in three variables with integer coefficients.
To describe this action, first let us set up a convenient notation.
For a vector $v\in \Z^3$, we denote the monomial $u_1^{v_1}u_2^{v_2}u_3^{v_3}$ as $U^v$, and denote the ring $\Z[u_1^\pm, u_2^\pm, u_3^\pm]$ as $\Z[U^\pm]$.
Now any element of $\Z[U^\pm]$ can be written as $\sum_{v\in \Z^3}a_vU^v$, where only finitely many $a_v$'s are nonzero.
We define, for a Laurent polynomial $p=\sum_{v\in \Z^3}a_vU^v$ and $\vp\in L^2(X, \mu)$, the function $p\cdot \vp$ in $L^2(X, \mu)$ as
\begin{equation}
	p\cdot \vp = (\sum_v a_vU^v)\cdot \vp = \sum_{v\in \Z^3} a_v(v\cdot \vp)
\end{equation}
We say that $p$ \define{annihilates} $\vp$ if $p\cdot \vp$ is $0$.

For $i\in \set{1, 2, 3}$, we say that $\vp\in L^2(X, \mu)$ is \define{$i$-periodic} if there is a rank-$i$ subgroup $\Lambda$ of $\Z^3$ such that $v\cdot \vp=\vp$ for all $v\in \Lambda$.\footnote{We emphasize that this equation is written in $L^2$ and hence, when $f$ is an actual function, it only says that $f$ and $v\cdot f$ agree almost everywhere and not necessarily everywhere.}
A measurable subset $B$ of $X$ will be called $i$-periodic if $1_B$ is $i$-periodic. 

Finally, we define a measurable subset $B$ of $X$ to be \define{$2$-weakly periodic} if there exists a partition $B=B_1\sqcup \cdots \sqcup B_k$ of $B$ into finitely many measurable subsets $B_1, \ldots, B_k$ such that each $B_i$ is $2$-periodic.
Similarly, we say that $B$ is $1$-\define{weakly periodic}, or simply \define{weakly periodic} if $B$ can be partitioned into finitely many $1$-periodic subsets.

The following lemma is the $3$-dimensional analog of \cite[Section 2]{bhattacharya_tilings}.

\begin{lemma}
	\emph{\cite[Section 2]{bhattacharya_tilings}}
	\label{label:bhattacharya correspondence principle}
	Write $A=\set{x\in X:\ x(0) = 1}$.
	If $A$ is $i$-periodic then $\mu$-almost every point in $X$ is $i$-periodic.
	If $A$ is $i$-weakly periodic, then $\mu$-almost every point in $X$ is $i$-weakly periodic.
\end{lemma}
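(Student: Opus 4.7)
The plan is to unravel the $L^2$-statement that $1_A$ is $i$-periodic into a pointwise almost-everywhere condition on $x \in X$, and then combine countably many such conditions using the countability of $\Z^3$.

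The key translation is a direct computation. With the shift action written as $(g \cdot x)(w) = x(w-g)$, one finds $(g \cdot 1_A)(x) = 1_A((-g) \cdot x) = x(g)$, while $1_A(x) = x(0)$. Hence the $L^2$-equality $g \cdot 1_A = 1_A$ amounts to the assertion that $x(g) = x(0)$ for $\mu$-almost every $x$. Translating this identity by an arbitrary $w \in \Z^3$ and using the $\Z^3$-invariance of $\mu$, the identity $x(g+w) = x(w)$ likewise holds for $\mu$-almost every $x$.

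If $A$ is $i$-periodic with witnessing rank-$i$ subgroup $\Lambda$, then for each pair $(\lambda, w) \in \Lambda \times \Z^3$ the set $N_{\lambda, w} := \lrset{x \in X : x(\lambda+w) \neq x(w)}$ is $\mu$-null. Since $\Lambda \times \Z^3$ is countable, the union $N := \bigcup_{\lambda, w} N_{\lambda, w}$ is $\mu$-null, and for every $x \notin N$ the subset $\lrset{v \in \Z^3 : x(v) = 1}$ of $\Z^3$ is $\Lambda$-invariant, i.e., $x$ is $i$-periodic. For the weakly periodic case, decompose $A = A_1 \sqcup \cdots \sqcup A_k$ with each $A_j$ being $i$-periodic via some rank-$i$ subgroup $\Lambda_j$, and define $S_j(x) := \lrset{v \in \Z^3 : (-v) \cdot x \in A_j}$. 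A countable-intersection argument, using $\Z^3$-invariance of $\mu$ and the fact that the $A_j$ form a partition of $A$ up to a null set, shows that for $\mu$-almost every $x$ the support of $x$ partitions as $S_1(x) \sqcup \cdots \sqcup S_k(x)$. Applying the argument of the first part to each $A_j$ in parallel---taking the union of the null sets over the countable family of triples $(j, \lambda, w)$ with $j \in \lrset{1, \ldots, k}$, $\lambda \in \Lambda_j$, and $w \in \Z^3$---shows that outside a single null set each $S_j(x)$ is $\Lambda_j$-invariant. This exhibits the support of $x$ as a disjoint union of finitely many $i$-periodic subsets of $\Z^3$.

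The only real work is bookkeeping: keeping track of the countable families of null sets to be discarded so that a single exceptional null set suffices for the geometric conclusion to hold everywhere. There is no genuine analytical difficulty, and the argument is essentially parallel to the one in \cite{bhattacharya_tilings} transplanted to $\Z^3$.
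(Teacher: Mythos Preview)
Your argument is correct. The paper does not supply its own proof of this lemma; it simply records the statement with a citation to \cite[Section 2]{bhattacharya_tilings}, so there is nothing to compare against beyond noting that your proof is precisely the standard one from that reference, transplanted to $\Z^3$: the identity $(g\cdot 1_A)(x)=x(g)$ converts the $L^2$-periodicity of $1_A$ (or of each $1_{A_j}$) into a pointwise almost-everywhere statement, and the countability of $\Z^3$ lets you discard a single null set outside of which the desired $\Lambda$-invariance (respectively, $\Lambda_j$-invariance of each $S_j(x)$) holds everywhere.
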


\subsection{Spectral Theorem}
\label{section:spectral theorem}

Let $\T^n$ be the $n$-dimensional torus.
Let $\nu$ be a probability measure on $\T^n$.
There is a canonical unitary representation of $\Z^n$ on $L^2(\T^n, \nu)$ which we now describe.
Recall that the characters on $\T^n$ are in bijection with $\Z^n$.
Let us write $\chi_g:\T^n\to \C$ to denote the character corresponding to $g\in \Z^n$.
Then we define a map $\sigma_g:L^2(\T^n, \nu)\to L^2(\T^n, \nu)$ by writing $\sigma_g(\phi) = \chi_g\phi$, where the latter is the pointwise product of $\chi_g$ and $\phi$.
It can be easily checked that each $\sigma_g$ is in fact a unitary linear map.
Thus we get a map $\sigma:\Z^n\to \mc U(L^2(\T^n, \nu))$ which takes $g$ to $\sigma_g$.

By the Stone-Weierstrass theorem we have the $\C$-span of the characters are dense in $C(\T^n)$, where $C(\T^n)$ is the set of all the complex valued continuous functions on $\T^n$ equipped with the sup-norm topology.
Also, since $\T^n$ is a compact metric space, we have $C(\T^n)$ is dense in $L^2(\T^n, \nu)$.
Therefore the $\C$-span of the characters are dense in $L^2(\T^n, \nu)$.
From this we see that, if $\one$ denotes the constant map which takes the value $1$ everywhere, $\text{Span}\set{\sigma_g\one:\ g\in \Z^n}$ is dense in $L^2(\T^n, \nu)$.
In other words, $\one$ is a \emph{cyclic vector} for this representation.
We now want to state a theorem which dictates that this is a defining property of unitary representations of $\Z^n$.

\begin{theorem}
	\label{theorem:spectral theorem}
	\textit{\textbf{Spectral Theorem.}}
	Let $H$ be a Hilbert space and $\tau:\Z^n\to \mc U(H)$ be a unitary representation of $\Z^n$.
	Suppose $v\in H$ is a cyclic vector, that is, $\text{Span}\set{\tau_gv:\ g\in \Z^n}$ is dense in $H$, and assume that $v$ has unit norm.
	Then there is a unique probability measure $\nu$ on $\T^n$ and a unitary isomorphism $\theta: H \to L^2(\T^n, \nu)$ with $\theta(v)= \one$ such that the following diagram commutes for all $g\in \Z^n$
	\begin{figure}[H]
		\centering
		\begin{tikzcd}
			H\ar[r, "\theta"]\ar[d, "\tau_g"] &	L^2(\T^n, \nu)\ar[d, "\sigma_g"]\\
			H\ar[r, "\theta"]\ar[r, "\theta"] & L^2(\T^n, \nu)
		\end{tikzcd}
	\end{figure}
	\noindent
\end{theorem}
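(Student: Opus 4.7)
The plan is to extract the measure $\nu$ from the matrix coefficients of the cyclic vector $v$ and then build $\theta$ by the natural assignment $\tau_g v \mapsto \chi_g$. First I would set $c(g) := \langle \tau_g v, v\rangle_H$ and observe that the unitarity of $\tau$ makes $c$ a positive-definite function on $\Z^n$: for any finite collection of pairs $(g_i, a_i)$,
\begin{equation*}
\sum_{i,j} a_i\overline{a_j}\, c(g_i - g_j) = \Bigl\|\sum_i a_i\, \tau_{g_i} v\Bigr\|_H^2 \ge 0.
\end{equation*}
Bochner's theorem for the discrete LCA group $\Z^n$ (equivalently, the multidimensional Herglotz theorem) then produces a unique positive Borel measure $\nu$ on the Pontryagin dual $\widehat{\Z^n} = \T^n$ satisfying $c(g) = \int_{\T^n}\chi_g\, d\nu$ for every $g\in \Z^n$. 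Since $\|v\|=1$ forces $c(0)=1$, the measure $\nu$ is automatically a probability measure. This invocation of Bochner-Herglotz is the only non-elementary input in the argument, and it is what I expect to carry the weight of the theorem; everything else is a formal computation.

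Next I would define $\theta$ on the dense subspace $V = \mathrm{Span}\{\tau_g v : g\in \Z^n\}$ by $\theta(\tau_g v) = \chi_g$ and extend by $\C$-linearity. Writing any $w\in V$ as $w = \sum_g a_g \tau_g v$, the defining property of $\nu$ gives
\begin{equation*}
\|w\|_H^2 = \sum_{g,h} a_g\overline{a_h}\, c(g-h) = \int_{\T^n}\Bigl|\sum_g a_g \chi_g\Bigr|^2\, d\nu = \|\theta(w)\|_{L^2(\T^n,\nu)}^2,
\end{equation*}
which shows simultaneously that $\theta$ is well-defined on $V$ (the zero vector in $H$ is sent to the zero vector in $L^2(\nu)$) and that $\theta$ is an isometry. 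Extending by continuity produces an isometric embedding $\theta: H \to L^2(\T^n, \nu)$; the image contains every character, and since the $\C$-span of the characters is dense in $L^2(\T^n, \nu)$ by the Stone-Weierstrass argument recalled in this subsection, $\theta$ is surjective and hence unitary.

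It remains to verify the intertwining relation and uniqueness. Taking $g=0$ gives $\theta(v) = \chi_0 = \one$ directly from the definition. For the commuting square, on the dense subspace $V$ one has $\theta(\tau_g \tau_h v) = \chi_{g+h} = \chi_g\, \theta(\tau_h v) = \sigma_g\, \theta(\tau_h v)$, and since both $\theta\circ\tau_g$ and $\sigma_g\circ\theta$ are continuous, the identity extends to all of $H$. For uniqueness, any other pair $(\nu', \theta')$ with the stated properties must satisfy
\begin{equation*}
\int_{\T^n}\chi_g\, d\nu' = \langle \sigma_g \one, \one\rangle_{L^2(\nu')} = \langle \theta' \tau_g v, \theta' v\rangle_H = \langle \tau_g v, v\rangle_H = \int_{\T^n}\chi_g\, d\nu
\end{equation*}
for every $g$, which forces $\nu' = \nu$ by Fourier uniqueness; $\theta'$ and $\theta$ then agree on $V$ and therefore on $H$.
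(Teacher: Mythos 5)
Your proof is correct and is the standard construction via the Bochner--Herglotz theorem: extract the measure from the positive-definite matrix-coefficient $c(g)=\langle\tau_g v, v\rangle$, send $\tau_g v\mapsto\chi_g$, and verify isometry, density, intertwining, and uniqueness. The paper states this spectral theorem as known background and does not give a proof, so there is no argument in the text to compare against; your write-up supplies exactly the classical argument one would cite.
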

So the above theorem says that the abstract representation $\tau$ can be thought of as the canonical concrete representation $\sigma$, at the cost of a probability measure $\nu$.
Thus understanding the measure $\nu$ is equivalent to understanding $\tau$.

\section{Preparatory Lemmas}
\label{section:elementary results}
\label{section:preparatory results}

\subsection{Some Combinatorial Lemmas}

Before stating the lemmas, we fix the meaning of a phrase that recurs throughout this section.
Given a nonzero vector $g\in \R^3$, we say that a plane $\pi\subseteq \R^3$ is \define{parallel} to $g$ if $\pi$ contains a line in the direction of $g$.
Equivalently, the two-dimensional linear subspace obtained by translating $\pi$ to the origin contains $g$.
Equivalently still, $\pi$ is invariant under translation by $g$, that is, $\pi + g = \pi$.
Writing $\pi = \set{x\in \R^3:\ \ab{x, v} = c}$ for some nonzero normal vector $v$ and some scalar $c$, the condition is simply $\ab{g, v} = 0$.
In the same way, a line $\ell\subseteq \R^3$ is parallel to $g$ if it lies in the direction of $g$, i.e.\ $\ell + g = \ell$.

\begin{lemma}
	\label{lemma:first combonatorial lemma precursor}
	Let $F$ be a finite subset of $\Z^3$ and $p$ be a prime.
	Let $g$ be a nonzero vector in $\Z^3$.
	Assume that whenever $\pi$ is a plane in $\R^3$ parallel to $g$ we have $|\pi\cap F|$ is divisible by $p$.
	Then whenever $\ell$ is a line parallel to $g$ we have $|\ell\cap F|$ is also divisible by $p$. (The converse is also true.)
\end{lemma}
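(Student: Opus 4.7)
The plan is a short genericity argument, with the converse direction following from a trivial partitioning. Fix a line $\ell$ in $\R^3$ parallel to $g$. I claim there exists a plane $\pi$ that contains $\ell$ (so $\pi$ is automatically parallel to $g$) and satisfies $\pi \cap F = \ell \cap F$. Once such a $\pi$ is produced, the hypothesis applied to $\pi$ yields $p \mid |\pi \cap F| = |\ell \cap F|$, which is the desired conclusion.

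To exhibit such a $\pi$, I would work inside the pencil of planes containing $\ell$. Every plane through $\ell$ is determined by a $2$-dimensional linear subspace of $\R^3$ containing the direction $\R g$, and such subspaces form a one-parameter family (a projective line in $\R^3 / \R g$). For each point $q \in F \setminus \ell$, the affine span of $\ell \cup \{q\}$ is the unique plane in the pencil that contains $q$. Since $F \setminus \ell$ is finite, only finitely many planes in the pencil meet $F \setminus \ell$, and choosing any other plane $\pi$ in this (infinite) pencil gives $\pi \cap F = \ell \cap F$, as required.

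The converse direction asserted parenthetically is immediate: any plane $\pi$ parallel to $g$ is a disjoint union of the lines parallel to $g$ that it contains, so $|\pi \cap F|$ is a sum of terms of the form $|\ell \cap F|$, each divisible by $p$.

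There is no real obstacle here; the argument rests only on the observation that a finite set in $\R^3$ admits generic hyperplane sections, and the primality of $p$ plays no role in the proof. The lemma presumably gets used later by first detecting divisibility by $p$ along all planes parallel to $g$ (perhaps via a character-theoretic or spectral-support argument), and then transferring this information down to lines parallel to $g$ as asserted.
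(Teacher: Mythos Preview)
Your argument is correct, and it is genuinely different from the paper's. The paper normalizes $g=(0,0,1)$, projects $F$ to the $xy$-plane, picks an \emph{extreme point} $(a,b)$ of the convex hull of the projection, and uses a supporting line there to find a plane $\pi$ parallel to $g$ whose intersection with $F$ is exactly the single fiber over $(a,b)$; this shows that fiber has size divisible by $p$, and the remaining set $F\setminus(\pi\cap F)$ still satisfies the hypothesis, so the proof finishes by induction on $|F|$.

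Your approach is more direct: rather than peeling off fibers one at a time via convex geometry, you fix the line $\ell$ of interest from the outset and use the infinite pencil of planes through $\ell$ to find, by a finite-exclusion (genericity) argument, a plane $\pi\supseteq\ell$ with $\pi\cap F=\ell\cap F$. This avoids both the $GL_3(\Z)$ reduction and the induction. The paper's route has the mild advantage of being slightly more ``hands-on'' (it exhibits the planes explicitly via extreme points), while yours is shorter and makes it more transparent that the divisor $p$ need not be prime. Both approaches are elementary and either would serve equally well in the paper's later applications.
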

\begin{proof}
	We may assume that $g$ is primitive.\footnote{See Appendix \ref{section:algebra appendix} for the definition of a primitive vector.} 
	Using Lemma \ref{lemma:glnz acts transitively on the set of all the primitive vectors}, after applying a suitable $GL_3(\Z)$ transformation, we may assume that $g=(0, 0, 1)$.
	Let $S$ be the image of $F$ under the map $f:\R^3\to \R^2$ defined by
	\begin{equation}
		f(x, y, z) = (x, y)
	\end{equation}
	for all $(x, y, z)\in \Z^3$, that is, $f$ is the orthogonal projection on the $x, y$-plane.
	Let $(a, b)$ be an extreme point of the convex hull of $S$ in $\R^2$ and let $\ell$ be a line in $\R^2$ such that $\ell\cap S = \set{(a, b)}$.
	Let $\pi$ be a plane parallel to $g$ in $\R^3$ such that the image of $\pi$ under $f$ is $\ell$.
	Then $\pi\cap F = \set{(a, b, z)\in \Z^3:\ (a, b, z)\in F}$.
	By hypothesis, $|\pi\cap F|$ has size divisible by $p$.
	Now the set $F\setminus \pi\cap F$ satisfies the same hypothesis as $F$ and now we can finish inductively.

	For the converse, suppose that $|\ell\cap F|$ is divisible by $p$ for every line $\ell$ parallel to $g$.
	Any plane $\pi$ parallel to $g$ is partitioned by the lines parallel to $g$ contained in it, so $\pi\cap F$ is the disjoint union of the finitely many nonempty sets $\ell\cap F$ with $\ell\subseteq \pi$.
	Each such set has size divisible by $p$, and hence so does $|\pi\cap F|$.
\end{proof}

\begin{definition}
	Let $S$ be a subset of $\Z^3$.
	Let $\Lambda$ be a rank-$2$ subgroup of $\Z^3$ and $g$ be a nonzero vector such that $\Z g + \Lambda$ is a rank-$3$ subgroup of $\Z^3$ .
	We say that $S$ is a \define{prism} with \define{base} $\Lambda$ and \define{axis} $g$ if there is a vector $h\in \Z^3$ and a finite set $\set{0=n_0, n_1, \ldots, n_k}$ of integers such that 
	\begin{equation}
		S = \bigcup_{i=0}^k ( n_ig\ +\ (h+\Lambda) \cap S )
	\end{equation}
	We refer to $(h+\Lambda)\cap S$ as a \define{foundation} of $S$.
\end{definition}

An example of a prism would be any set of the form $A\times B$ where $A\subseteq\Z^2$ and $B\subseteq \Z$.

\begin{lemma}
	\label{lemma:first combinatorial lemma}
	Let $F\subseteq \Z^3$ be a set of size $p^2$, where $p$ is a prime.
	Suppose that there is a nonzero vector $g$ in $\Z^3$ such that whenever $\pi$ is a plane parallel to $g$ in $\R^3$ we have $|\pi\cap F|$ is divisible by $p$.
	Also, assume that there is a plane $\pi'$, not parallel to $g$ and parallel to a rank-$2$ subgroup of $\Z^3$, such that $|\pi''\cap F|$ is divisible by $p$ whenever $\pi''$ is a plane parallel to $\pi'$.
	Then $F$ is a prism with foundation of size $p$.
\end{lemma}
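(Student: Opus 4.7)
The plan is to combine both hypotheses via a double-counting argument on the vertical fibers of $F$ in the $g$-direction, then extract a prism decomposition.

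First, by Lemma \ref{lemma:first combonatorial lemma precursor} every line parallel to $g$ meets $F$ in a number of points divisible by $p$. Applying a suitable $GL_3(\Z)$ change of coordinates, I may assume $g = (0,0,1)$, and write the vertical fibers as $F_{(X,Y)} := F \cap (\{(X,Y)\} \times \Z)$. Each nonempty fiber then has size at least $p$, so the number of nonempty fibers is at most $|F|/p = p$. Writing any plane parallel to $\pi'$ as $\pi''_d := \{ax+by+cz = d\}$ for $(a,b,c)$ a normal of $\pi'$ (necessarily with $c \neq 0$ since $\pi'$ is not parallel to $g$), every such plane meets each vertical line in at most one point, and so
\[
|F \cap \pi''_d| \;\leq\; \#\{\text{nonempty fibers}\} \;\leq\; p.
\]
On the other hand, every nonempty $|F \cap \pi''_d|$ is a positive multiple of $p$, so it equals exactly $p$. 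Both inequalities therefore saturate: there are exactly $p$ nonempty fibers, each of size $p$, and each nonempty slice $F \cap \pi''_d$ picks out exactly one point from each fiber.

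Next, I would deduce that the $p$ fibers are integer translates of one another along $g$. Label the fibers by $j = 1, \ldots, p$ with projections $(X_j, Y_j)$ and height sets $Z_j \subset \Z$, and let $d_1 < \cdots < d_p$ enumerate the values of $d$ for which $\pi''_d \cap F \neq \emptyset$. On $\pi''_{d_i}$ the point contributed by the $j$-th fiber has height $(d_i - aX_j - bY_j)/c$, so
\[
Z_j \;=\; \bigl\{(d_i - aX_j - bY_j)/c : i = 1, \ldots, p\bigr\}.
\]
Subtracting the $i=1$ term shows that $Z_j - (d_1 - aX_j - bY_j)/c$ equals the fixed set $S := \{(d_i - d_1)/c : i = 1, \ldots, p\}$, which is a subset of $\Z$ (integrality follows from $Z_j \subset \Z$). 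Setting $F_0 := F \cap \pi''_{d_1}$, a set of $p$ points with one from each fiber, this yields $F = \bigsqcup_{s \in S}(F_0 + sg)$ with $|F_0| = |S| = p$.

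Finally, I would realize $F_0$ as $(h + \Lambda) \cap F$ for a rank-$2$ sublattice $\Lambda \subset \Z^3$ with $\Z g + \Lambda$ of rank $3$. If the points of $F_0$ are not collinear, they span a unique affine plane, which since it contains three affinely independent lattice points has a primitive integer normal $(a', b', c')$; here $c' \neq 0$ because no two points of $F_0$ differ by a multiple of $g$. I would then take $\Lambda := \{(x,y,z) \in \Z^3 : a'x+b'y+c'z = 0\}$ and any $h \in F_0$. If instead the points of $F_0$ are collinear on a line $\ell$ (automatically a lattice line), I would choose any rank-$2$ sublattice $\Lambda \subset \Z^3$ containing the primitive direction of $\ell$ with $\Z g \cap \Lambda = \{0\}$ (such $\Lambda$ exists because $\ell$ is not parallel to $g$). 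In both cases the parallel planes $h + \Lambda + sg$ for $s \in S \setminus \{0\}$ are disjoint from $h + \Lambda$, so $(h + \Lambda) \cap F = F_0$ and $F$ is a prism with foundation of size $p$. The most delicate step is this final realization, where the collinear sub-case of $F_0$ (arising for instance at $p = 2$) requires separate treatment since the plane through $F_0$ is not uniquely determined.
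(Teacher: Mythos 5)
Your proof takes essentially the same counting approach as the paper, but dualized: you normalize $g$ to $(0,0,1)$ via Lemma \ref{lemma:glnz acts transitively on the set of all the primitive vectors} and count vertical \emph{fibers}, whereas the paper normalizes $\pi'$ to $\Z^2\times\set{0}$ via Lemma \ref{lemma:flattening subgroup lemma} and counts \emph{slices}. Both show there are exactly $p$ of each, each of size $p$, and both deduce the prism structure. The paper's choice has a concrete advantage: after flattening $\pi'$ to $\Z^2\times\set{0}$, the base lattice $\Lambda$ is handed to you for free, whereas with your normalization you must reconstruct it at the end. That reconstruction step is where your argument becomes unnecessarily elaborate: you could simply take $\Lambda := \set{(x,y,z)\in\Z^3 : ax+by+cz=0}$ (the rank-$2$ sublattice normal to $\pi'$) and $h$ any point of $F_0$; then $(h+\Lambda)\cap F = F\cap\pi''_{d_1} = F_0$ automatically, and $\Z g + \Lambda$ has rank $3$ since $c\neq 0$. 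This avoids the collinear/non-collinear case split entirely.

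There is also a flaw in the justification you give in the non-collinear case: you assert $c'\neq 0$ "because no two points of $F_0$ differ by a multiple of $g$," but this inference is not valid. The affine span $P$ of $F_0$ could in principle contain the direction $g$ (i.e.\ $c' = 0$) even though no two points of $F_0$ are vertically aligned — containing a direction is a property of the plane, not of the finite point set inside it. The correct reason $c'\neq 0$ holds is that a non-collinear $F_0$ is contained in the $2$-dimensional plane $\pi''_{d_1}$ and hence spans exactly $\pi''_{d_1}$, whose normal has nonzero third coordinate by hypothesis. Since the conclusion $c'\neq 0$ is correct and easily repaired (or bypassed by the simpler choice of $\Lambda$ above), this is a patchable gap rather than a fatal one, but as written the step does not follow from the stated reason.
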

\begin{proof} 
	Since $\pi'$ is parallel to a rank-$2$ subgroup of $\Z^3$, the two-dimensional subspace parallel to $\pi'$ is the real span of a rank-$2$ (primitive) sublattice of $\Z^3$.
	Using Lemma \ref{lemma:flattening subgroup lemma}, after a suitable $GL_3(\Z)$ transformation, we may assume that $\pi'$ is the plane $\Lambda = \Z^2\times \set{0}$, so that the planes parallel to $\pi'$ are exactly the $\Z^2\times\set{n}$, $n\in\Z$.
	Since $\pi'$ is not parallel to $g$ by hypothesis, we see that $\Z g + \Lambda$ is a subgroup of rank $3$ in $\Z^3$.
	Also, without loss of generality assume that $g$ is a primitive vector.
	By translating $F$ if necessary, we may assume that $\Lambda$ intersects $F$ but $\Z^2\times \set{k}$ does not intersect $F$ whenever $k$ is a negative integer.

	Define $B = \set{n\geq 0:\ (\Z^2\times\set{n})\cap F\neq \emptyset }$ and say $B = \set{0=b_0, b_1, \ldots, b_k}$.
	Since, by hypothesis, each translate of $\Lambda$ intersects $F$ in a set of size divisible by $p$, we see that $(k+1)p = |B|p\leq |F| = p^2$ and hence $k+1\leq p$.
	By Lemma \ref{lemma:first combonatorial lemma precursor} we know that whenever $\ell$ is a line in $\R^3$ parallel to $g$, we have that $\ell\cap F$ has size divisible by $p$.
	Thus there are at least $p$ translates of $\Lambda$ in the direction of $g$ which intersect $F$.
	Thus $k+1 = |B|\geq p$.
	So we have $k+1 = p$.
	This forces that $(\Z^2\times \set{b_i})\cap F$ has size $p$ for each $i\in \set{0, 1, \ldots, k}$.

	Since $\pi'=\Lambda$ is not parallel to $g$, every line parallel to $g$ meets each level $\Z^2\times\set{b_i}$ at most once, so a line parallel to $g$ that meets $F$ contains at most $k+1=p$ points.
	Being a nonzero multiple of $p$ by Lemma \ref{lemma:first combonatorial lemma precursor}, it contains exactly $p$ points, one in $F\cap(\Z^2\times\set{b_i})$ for each $i$.
	In particular, the line parallel to $g$ through any point of $\Lambda\cap F$ meets $F$ at each level $b_i$.
	Let $p_0, p_1, \ldots, p_k\in \Z^2$ be such that $(p_i, b_i) - (p_0, b_0)$ are parallel to $g$ for each $i\in \set{1, \ldots, k}$.
	Since $g$ is a primitive vector, we can find integers $n_1, \ldots, n_k$ such that $n_ig = (p_i, b_i) - (p_0, b_0)$.
	It follows that
	\begin{equation}
		F = \bigcup_{i=0}^k (n_ig + (\Lambda\cap F))
	\end{equation}
	and hence $F$ is a prism with base $\Lambda$ and axis $g$, and foundation of size $p$.
\end{proof}

A sublattice $L$ of $\Z^n$ is called \define{saturated} if it contains every $z\in\Z^n$ some positive integer multiple of which lies in $L$.
Equivalently, $L=\Z^n\cap\text{span}_\R(L)$, or again, the quotient $\Z^n/L$ is torsion-free.
A saturated sublattice is a direct summand of $\Z^n$, so it admits a complementary subgroup $V$ with $\Z^n=L+V$ and $L\cap V=\set{0}$.

\begin{lemma}
	\label{lemma:tiling by a prism with prime base}
	Let $F\subseteq\Z^3$ be a prism with foundation of prime size.
	Assume that $F$ is an exact cluster.
	Then there is a $3$-periodic $F$-tiling.
\end{lemma}
\begin{proof}
	Suppose first that the affine span of $F$ has dimension at most $2$.
	After a translation we may assume $0\in F$, so that $F$ is contained in the sublattice $L=\Z^3\cap\operatorname{span}_\R(F)$, of rank $d\le 2$.
	This $L$ is saturated.
	We identify it with $\Z^d$.
	Since $F\subseteq L$, a tile $F+t$ can meet $L$ only when $t\in L$, in which case $F+t\subseteq L$.
	Hence the tiles of any $F$-tiling that meet $L$ already tile $L$, so $F$ is an exact cluster of $L$.
	By Theorem \ref{theorem:existence of biperiodic tilings in two dimensions} (if $d=2$), Theorem \ref{theorem:uniform periodicity of one dimensional tilings} (if $d=1$), or trivially (if $d=0$), there is an $F$-tiling $W_0$ of $L$ invariant under a finite-index subgroup of $L$.
	Choose a complementary subgroup $V$ of $\Z^3$ so that $\Z^3=L\oplus V$ (possible since $L$ is saturated), so that every element of $\Z^3$ is uniquely a sum $\ell+v$ with $\ell\in L$ and $v\in V$.
	Since $W_0$ is an $F$-tiling of $L$, each such $\ell$ is in turn uniquely $a+w$ with $a\in F$ and $w\in W_0$.
	Hence every element of $\Z^3$ is uniquely $a+(w+v)$ with $a\in F$ and $w+v\in W_0+V$, so $W_0+V$ is an $F$-tiling of $\Z^3$.
	It is invariant under $V$ together with the finite-index subgroup of $L$ fixing $W_0$, hence under a finite-index subgroup of $\Z^3$, so it is $3$-periodic.
	From now on we assume that the affine span of $F$ has dimension $3$.

	Let $\Lambda$ and $g$ be the base and axis of $F$ respectively, and let $A=(h+\Lambda)\cap F$ be a foundation, so that $|A|$ is the prime in question and $F=\bigcup_{i=0}^k\bigl(n_ig+A\bigr)$ with $n_0=0$.
	After translating $F$ we may take $h=0$, so that $A=\Lambda\cap F$.

	Let $M=\Lambda + \Z g$, a rank-$3$ (hence finite-index) subgroup of $\Z^3$.
	The sum is direct since $g\notin\text{span}_{\R}(\Lambda)$.
	Fixing a basis $\set{\lambda_1,\lambda_2}$ of $\Lambda$, the triple $\set{\lambda_1,\lambda_2,g}$ is a basis of $M$.
	In the resulting coordinates $M\cong\Z^3$ we have $\Lambda=\Z^2\times\set{0}$, $g=(0,0,1)$, and, since $A\subseteq\Lambda$,
	\[
		F=\bigcup_{i=0}^k\bigl(n_ig+A\bigr)=A\times N,\qquad N=\set{n_0,\ldots,n_k}\subseteq\Z .
	\]
	Thus, inside $M$, the prism $F$ is the product $A\times N$.
	Since $F\subseteq M$ and the finitely many cosets of $M$ partition $\Z^3$, a point of a coset $M+r$ can be covered only by tiles $F+t$ with $t\in M+r$. Hence any $F$-tiling of $\Z^3$ restricts to an $F$-tiling of each coset, and conversely, if $T^\ast$ is an $F$-tiling of $M$ and $R$ is a set of coset representatives, then $\bigcup_{r\in R}(T^\ast+r)$ is an $F$-tiling of $\Z^3$ invariant under every subgroup of $M$ that leaves $T^\ast$ invariant.
	It therefore suffices to find a $3$-periodic $F$-tiling of $M$: a rank-$3$ subgroup of $M$ has finite index in $\Z^3$, so the resulting tiling of $\Z^3$ is again $3$-periodic.
	Renaming $M$ as $\Z^3$, we may assume $F=A\times N$ with $A\subseteq\Z^2$ and $N\subseteq\Z$ finite.

	\emph{$A$ and $N$ are exact.}
	Fix any $F$-tiling $T$ of $\Z^3$ and write $t=(t_{xy},t_z)$ for $t\in T$.
	A tile $(A\times N)+t$ meets the plane $\Z^2\times\set{c}$ in $(A+t_{xy})\times\set{c}$ when $c-t_z\in N$, and is disjoint from it otherwise.
	As these intersections partition $\Z^2\times\set{c}$, the set $\set{t_{xy}:\ c-t_z\in N}$ is an $A$-tiling of $\Z^2$.
	In particular $A$ is exact.
	Likewise, fixing $(x,y)\in\Z^2$, the tiles meeting the column $\set{(x,y)}\times\Z$ are exactly those with $(x,y)\in A+t_{xy}$, and such a tile covers the levels $N+t_z$.
	As these partition $\Z$, the set $\set{t_z:\ (x,y)\in A+t_{xy}}$ is an $N$-tiling of $\Z$, so $N$ is exact.

	\emph{Construction.}
	Since $F=A\times N$ has $3$-dimensional affine span and the affine dimension of a product is the sum of those of its factors, $A$ has $2$-dimensional affine span (and $N$ has $1$-dimensional affine span).
	Thus $A$ is an exact cluster of prime cardinality with rank-$2$ affine span, and Theorem \ref{theorem:uniform periodicity of prime cardinality clusters in two dimensions} provides an $A$-tiling $S$ of $\Z^2$ invariant under a rank-$2$ subgroup $\Gamma$ of $\Z^2$.
	By Theorem \ref{theorem:uniform periodicity of one dimensional tilings}, the exact cluster $N\subseteq\Z$ admits a ($1$-periodic) $N$-tiling $W$ of $\Z$.
	Since $S$ is an $A$-tiling of $\Z^2$ and $W$ is an $N$-tiling of $\Z$, the product $S\times W$ is an $F$-tiling of $\Z^3$: each $(x,z)\in\Z^2\times\Z$ has $x=a+s$ and $z=n+w$ for unique $a\in A$, $s\in S$, $n\in N$, $w\in W$, that is, $(x,z)=(a,n)+(s,w)$ for a unique $(a,n)\in F=A\times N$ and a unique $(s,w)\in S\times W$.
	Now $S$ is invariant under the rank-$2$ subgroup $\Gamma$ of $\Z^2$ and, being $1$-periodic, $W$ is invariant under $n_0\Z$ for some positive integer $n_0$.
	Hence $S\times W$ is invariant under $\Gamma\times n_0\Z$, a rank-$3$ (finite-index) subgroup of $\Z^3$, so it is a $3$-periodic $F$-tiling, completing the proof.
\end{proof}

\begin{lemma}
	\label{lemma:dilation lemma for higher level tilings}
	\textit{\textbf{Dilation Lemma.}}
	\emph{\cite[Corollary 11]{horak_kim_algebraic_method}}
	Let $F\subseteq \Z^3$.
	If $T$ is an $F$-tiling then for all $\alpha$ relatively prime to $|F|$ we have $T$ is also an $\alpha F$-tiling, where $\alpha F=\set{\alpha a:\ a\in F}$.
\end{lemma}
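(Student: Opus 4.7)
The plan is to use the mask polynomial of $F$ and exploit the Frobenius identity in characteristic $p$, following the algebraic strategy of Horak and Kim. First I would reduce to the case $\alpha=p$ prime with $\gcd(p,|F|)=1$, since any such $\alpha$ factors as a product of primes coprime to $|F|$ and dilation composes as $(\alpha\beta)F=\alpha(\beta F)$. Fix such a prime $p$ and let $F(U):=\sum_{a\in F}U^a\in\Z[U^\pm]$ be the mask polynomial. The tiling condition becomes the formal identity $F(U)\cdot\chi_T(U)=\sigma$, where $\chi_T(U)=\sum_{t\in T}U^t$ and $\sigma=\sum_{q\in\Z^3}U^q$; the product is well-defined coefficient-wise because $F$ is finite, and $\sigma$ is an eigenvector for Laurent-polynomial multiplication, with eigenvalue the augmentation $P(1,1,1)$.

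The key algebraic input is the Frobenius identity in $\Z[U^\pm]$: grouping the expansion of $F(U)^p$ by multisets of exponents shows every multinomial coefficient is divisible by $p$ except those arising from $p$-fold repetition of a single monomial, so $F(U)^p=F(U^p)+p\cdot S(U)$ with $S\in\Z_{\geq 0}[U^\pm]$. Multiplying the tiling identity by $F(U)^{p-1}$ and using the eigenvector property gives $F(U)^p\chi_T(U)=|F|^{p-1}\sigma$; substituting the Frobenius identity and reading off the coefficient of $U^q$ yields
\[ n(q) + p\cdot m(q) = |F|^{p-1}, \]
where $n(q):=|\{a\in F:\ q-pa\in T\}|$ is the count we wish to show equals $1$ and $m(q)\geq 0$ is a non-negative integer (being the $q$-th coefficient of the formal product $S(U)\chi_T(U)$). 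By Fermat's little theorem $|F|^{p-1}\equiv 1\pmod p$, so $n(q)\equiv 1\pmod p$. Combined with the count bounds $0\leq n(q)\leq |F|$ this forces $n(q)\geq 1$ for every $q\in\Z^3$.

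The main obstacle is upgrading the congruence $n(q)\equiv 1\pmod p$ to the equality $n(q)=1$. When $p>|F|$ the congruence together with $n(q)\leq |F|$ forces $n(q)=1$ immediately. For smaller primes I would combine two further inputs. The first is a density estimate: since $T$ has density $1/|F|$ in $\Z^3$, a boundary count gives $\sum_{q\in B_N}(n(q)-1)=O(|\partial B_N|)$ for cubes $B_N$, and since $n(q)-1\in p\cdot\Z_{\geq 0}$, the exceptional set $\{q:n(q)>1\}$ already has density zero. The second is a dynamical argument on the orbit closure $X=\overline{\{g\cdot\chi_T:g\in\Z^3\}}\subseteq\{0,1\}^{\Z^3}$: every point of $X$ is an $F$-tiling by closedness of the tiling condition, the continuous integer-valued function $\psi_q(x):=\sum_{a\in F}x(q-pa)$ satisfies $\psi_q\equiv 1\pmod p$ and $\psi_q\geq 1$ everywhere on $X$, and $\int\psi_q\,d\mu=1$ for every shift-invariant probability measure $\mu$ on $X$, forcing $\psi_q=1$ $\mu$-almost everywhere. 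Removing the "almost everywhere" in favor of "everywhere"---so that the identity $\psi_q(\chi_T)=1$ holds for the specific tiling $T$ and every $q$---is the technical crux of the proof; Horak and Kim bypass this difficulty via a direct cyclotomic analysis of the mask polynomial, exploiting Galois invariance of the zero locus of $F$ on roots of unity when $\gcd(p,|F|)=1$.
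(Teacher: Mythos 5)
The paper does not prove this lemma; it cites it from \cite[Corollary 11]{horak_kim_algebraic_method}, so there is no internal proof to compare against. Evaluating your sketch on its own terms: the mask-polynomial setup, the Frobenius identity $F(U)^p = F(U^p) + pS(U)$ with $S$ having non-negative integer coefficients, the relation $F(U)^p\chi_T(U) = |F|^{p-1}\sigma$, the coefficient equation $n(q) + p\,m(q) = |F|^{p-1}$, and the Fermat step giving $n(q)\equiv 1 \pmod{p}$ (hence $n(q)\geq 1$, i.e.\ $pF + T$ covers $\Z^3$) are all correct, as is the observation that for $p > |F|$ the crude bound $1 \leq n(q) \leq |F|$ then forces $n(q) = 1$.

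The gap is where you flag it, and it is a real gap: for primes $p < |F|$ you never establish $n(q)=1$, and neither of the proposed fallbacks closes it. The boundary count $\sum_{q\in B_N}(n(q)-1) = O(N^2)$ only shows that $\{q : n(q) > 1\}$ has density zero, which is entirely compatible with the covering failing to be a tiling at infinitely many sparse sites; density zero is not empty. The dynamical argument gives $\psi_q = 1$ only $\mu$-almost everywhere for each invariant $\mu$ on the orbit closure of $\chi_T$; in general a subshift can contain a clopen nonempty set that is null for every invariant measure (e.g.\ the orbit closure of a configuration with a single isolated symbol), so no abstract a.e.-to-everywhere upgrade exists, and one would need to exploit the tiling structure itself to rule out such behaviour --- precisely the step you do not supply and instead attribute to a cyclotomic analysis you do not reproduce. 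Since the paper invokes the lemma for every $\alpha$ coprime to $p$ with $|F|=p^2$, primes smaller than $|F|$ are exactly the cases that occur, so the missing case is the one that matters. A smaller issue: the reduction ``$\alpha$ factors as a product of primes coprime to $|F|$'' does not cover negative $\alpha$; the unit $-1$ is coprime to $|F|$ and is not a product of primes, so it needs a separate argument.
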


It should be noted that various authors (\cite{tijdeman_decomposition_of}, \cite{szegedy_algorithms_to_tile}, \cite{kari_szabados_alg_geom}, \cite{bhattacharya_tilings}, \cite{greenfeld_tao_2020}) had discovered the above lemma in one form or another.

%
%
%

\subsection{An Analytical Lemma}

\begin{definition}
	An element $\gamma\in S^1$ is said to be \define{irrational} if it is equal to $e^{i\theta}$ for some $\theta$ which is an irrational multiple of $2\pi$ .
	Equivalently, $\gamma\in S^1$ is irrational if there is no non-trivial character of $S^1$ in whose kernel $\gamma$ lies.
	More generally, elements $\gamma_1, \ldots, \gamma_m$ in $S^1$ are called \define{rationally independent} if there is no non-trivial character $\chi$ of $(S^1)^m$ such that $\chi(\gamma_1, \ldots, \gamma_m) = 1$.
\end{definition}

\begin{lemma}
	\label{lemma:non measure preparatory lemma}
	Let $\gamma_1, \ldots, \gamma_n$ be irrational elements in $S^1$ and $x_1, \ldots, x_n$ be complex numbers.
	Assume that
	\begin{equation}
		(\gamma_1^k - 1) x_1 + \cdots + (\gamma_n^k - 1) x_n \in \Z
	\end{equation}
	for all non-negative integers $k$.
	Then the above expression is $0$ for all $k$.
\end{lemma}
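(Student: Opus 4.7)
The plan is to study the continuous function $F\colon\T^n\to\C$ defined by $F(z_1,\ldots,z_n)=\sum_{j=1}^n x_j(z_j-1)$ along the sequence of points $g^k$, where $g=(\gamma_1,\ldots,\gamma_n)\in\T^n$, so that the hypothesis reads $F(g^k)\in\Z$ for every $k\geq 0$. Let $V$ be the closure in $\T^n$ of the cyclic group generated by $g$; it is a closed subgroup of $\T^n$, and the forward orbit $\{g^k:k\geq 0\}$ is already dense in $V$ (a standard fact for orbits in compact abelian groups, provable by either equidistribution or a direct pigeon-hole argument producing $g^{k_i}\to (1,\ldots,1)$ with $k_i\geq 1$). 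Since $\Z$ is closed in $\C$, continuity forces $F|_V$ to be $\Z$-valued, hence constant on each connected component of $V$.

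Let $V^0$ be the identity component of $V$, a subtorus of $\T^n$. Since $V^0$ is connected and $(1,\ldots,1)\in V^0$, the constant value of $F$ on $V^0$ equals $F(1,\ldots,1)=0$. The key structural step, where the irrationality hypothesis enters, is to show that for every $j\in\{1,\ldots,n\}$ the coordinate character $z\mapsto z_j$ is non-trivial on $V^0$. Indeed, otherwise $z_j=1$ for all $z\in V^0$; specializing to $z=g^m\in V^0$ where $m=|V/V^0|$ would give $\gamma_j^m=1$, making $\gamma_j$ a root of unity, contradicting its irrationality.

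It then remains to rule out $F$ taking a non-zero integer value on any other component $g^r V^0$, $1\leq r\leq m-1$. On such a coset one has $F(g^r z)=n_r\in\Z$ for all $z\in V^0$, and rearranging gives the identity of continuous functions on $V^0$
\[
\sum_{j=1}^n x_j\,\gamma_j^r\,z_j \;=\; n_r+\sum_{j=1}^n x_j.
\]
Both sides are finite linear combinations of characters of the torus $V^0$, so I would match the coefficient of the trivial character: on the left this is $\sum_{j:\,z\mapsto z_j\text{ trivial on }V^0} x_j\gamma_j^r$, which is an empty sum (hence zero) by the key step, and on the right it is $n_r+\sum_j x_j$. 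Thus $n_r=-\sum_j x_j$ for every $r$; specializing $r=0$ together with $n_0=0$ forces $\sum_j x_j=0$, and therefore $n_r=0$ for all $r$. This yields $f(k)=F(g^k)=0$ for every $k\geq 0$.

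The principal obstacle is the possible disconnectedness of $V$: from $F(1,\ldots,1)=0$ alone one can only conclude $F\equiv 0$ on the identity component, and $F$ could a priori be a non-zero integer constant on any of the other components. The irrationality of each $\gamma_j$ is precisely what excludes this pathology, via the projection-character argument of the key step.
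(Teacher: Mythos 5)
Your proof is correct, and it takes a genuinely different route from the paper's. The paper chooses a maximal rationally independent subset $\{\gamma_1,\ldots,\gamma_m\}$, asserts that every $\gamma_i$ is an integer monomial in these, pulls the expression back to a Laurent polynomial on $(S^1)^m$ which is continuous and integer-valued on a dense subset, hence constant by connectedness of $(S^1)^m$, and finally zero by approximating the identity. You instead stay inside $\T^n$, pass to the orbit closure $V$ (which you correctly allow to have several components), obtain the constant $0$ on $V^0$ from $F(1,\ldots,1)=0$, and deal with the remaining cosets by a Fourier-coefficient comparison on $V^0$, invoking irrationality of each $\gamma_j$ only to guarantee that the coordinate characters restrict non-trivially to $V^0$. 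Your route is not merely a valid alternative; it is somewhat more careful. When the multiplicative group generated by $\gamma_1,\ldots,\gamma_n$ in $S^1$ has non-trivial torsion — take $\gamma_2=-\gamma_1$ with $\gamma_1$ irrational, so that $\{\gamma_1\}$ is already a maximal rationally independent subset — the remaining $\gamma_i$ need not be integer monomials in the chosen independent ones, and the paper's monomial parameterization by $(S^1)^m$ does not go through verbatim (one must pass to roots or to a finite cover). Disconnectedness of $V$ is precisely where this torsion shows up, and your coset-by-coset character argument absorbs it cleanly. Both arguments share the same engine — density of the forward orbit, continuity, and discreteness of $\Z$ — but resolve the resulting constants differently: the paper by connectedness of an auxiliary torus, you by matching the trivial-character coefficient on $V^0$.
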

\begin{proof}
	Write
	\[
		S_k=\sum_{i=1}^n(\gamma_i^k-1)x_i.
	\]
	we must show that $S_k=0$ for every $k\geq 0$.
	Let $G=\ab{\gamma_1,\ldots,\gamma_n}$ be the subgroup of $S^1$ generated by the $\gamma_i$.
	It is finitely generated and abelian, and its torsion subgroup $T$ is finite.
	Since every finite subgroup of $S^1$ is cyclic, we may write $T=\ab{\zeta}$ for a primitive $q$-th root of unity $\zeta$ (with $q=1$ when $T$ is trivial).
	The quotient $G/T$ is finitely generated and torsion-free, hence free.
	Choosing $\beta_1,\ldots,\beta_d\in G$ whose images form a basis of $G/T$, every element of $G$, and in particular each $\gamma_i$, can be written as
	\[
		\gamma_i=\zeta^{c_{i0}}\beta_1^{c_{i1}}\cdots\beta_d^{c_{id}},
		\qquad c_{ij}\in\Z .
	\]
	The elements $\beta_1,\ldots,\beta_d$ are rationally independent.
	This is because if $\beta_1^{a_1}\cdots\beta_d^{a_d}=1$ for some integers $a_1, \ldots, a_d$, then that fact that $\beta_1, \ldots, \beta_d$ form a basis of $G/T$ yields $a_1=\cdots=a_d=0$.
	Set $c_i=(c_{i1},\ldots,c_{id})\in\Z^d$.
	Since each $\gamma_i$ is irrational, it is not a root of unity, so $c_i\neq 0$ for every $i$ (otherwise $\gamma_i=\zeta^{c_{i0}}$ would lie in $T$).

	Fix a residue $\rho\in\set{0,1,\ldots,q-1}$ and define a Laurent polynomial on $(S^1)^d$ by
	\[
		f_\rho(U_1,\ldots,U_d)
		=
		\sum_{i=1}^n\lrp{\zeta^{\rho c_{i0}}\beta_1^{\rho c_{i1}}\cdots\beta_d^{\rho c_{id}}\,U_1^{c_{i1}}\cdots U_d^{c_{id}}-1}x_i .
	\]
	Put $\delta_j=\beta_j^{\,q}$ for $1\leq j\leq d$.
	The $\delta_j$ are again rationally independent, since $\delta_1^{a_1}\cdots\delta_d^{a_d}=1$ gives $\beta_1^{qa_1}\cdots\beta_d^{qa_d}=1$, whence $qa_j=0$ and so $a_j=0$ for all $j$.
	For $k=\rho+qt$ with $t\geq 0$ we have $\zeta^{kc_{i0}}=\zeta^{\rho c_{i0}}$ and $\beta_j^{kc_{ij}}=\beta_j^{\rho c_{ij}}\delta_j^{tc_{ij}}$, so that
	\[
		f_\rho(\delta_1^t,\ldots,\delta_d^t)=S_{\rho+qt}\in\Z
		\qquad\text{for all }t\geq 0 ,
	\]
	where the integrality comes from the hypothesis.
	By the rational independence of $\delta_1,\ldots,\delta_d$, the set $\set{(\delta_1^t,\ldots,\delta_d^t):\ t\geq 0}$ is dense in $(S^1)^d$.\footnote{See Theorem 4.14 in \cite{einsiedler_ward_ergodic_theory}.}
	As $f_\rho$ is continuous and integer-valued on this dense set, its image lies in the closed set $\Z$.
	Further, as $(S^1)^d$ is connected while $\Z$ is discrete, $f_\rho$ is constant.
	Denote its value by $a_\rho$.
	Evaluating at $t=0$ gives $a_\rho=S_\rho$.

	It remains to show that $a_\rho=0$ for every $\rho$.
	Recall that distinct characters of $(S^1)^d$ are linearly independent, so a Laurent polynomial is determined by its monomial coefficients.
	In particular a constant Laurent polynomial equals its constant coefficient, and one vanishing on $(S^1)^d$ has all its coefficients zero.

	Consider first $\rho=0$. Here
	\[
		f_0(U)=\sum_{i=1}^n\lrp{U_1^{c_{i1}}\cdots U_d^{c_{id}}-1}x_i ,
	\]
	which is constant with value $f_0(1,\ldots,1)=0$, so $f_0$ vanishes identically and its constant coefficient is $0$.
	Because every $c_i\neq 0$, each monomial $U_1^{c_{i1}}\cdots U_d^{c_{id}}$ is nonconstant, so the only constant contribution to $f_0$ comes from the terms $-x_i$.
	Hence its constant coefficient equals $-\sum_{i=1}^n x_i$, and we conclude that
	\[
		\sum_{i=1}^n x_i=0 .
	\]

	Now take an arbitrary $\rho$. Since every $c_i\neq 0$, every monomial appearing in $f_\rho$ is likewise nonconstant, so the only constant contribution to $f_\rho$ again comes from the terms $-x_i$. Thus the constant coefficient of $f_\rho$ is $-\sum_{i=1}^n x_i=0$. As $f_\rho$ is constant, it equals its constant coefficient, and therefore $a_\rho=0$.

	Finally, for any $k\geq 0$, writing $k=\rho+qt$ yields $S_k=f_\rho(\delta_1^t,\ldots,\delta_d^t)=a_\rho=0$, as claimed.
\end{proof}


%
%
%

%
\subsection{An Elementary Fact about the Kernel of Characters}

\begin{lemma}
	\label{lemma:intersection of two kernels}
	Let $g$ and $h$ be nonzero vectors in $\Z^3$ which are linearly independent.
	Then there is $v\in \Z^3\setminus \set{0}$ orthogonal to both $g$ and $h$ and a positive integer $n$ such that 
	\begin{equation}
		\ker\chi_g\cap \ker\chi_h
		\subseteq
		\bigcup_{0\leq i, j, k< n} \lrb{ \lrp{ \frac{i}{n}, \frac{j}{n}, \frac{k}{n}} + \set{tv\in \R^3/\Z^3:\ t\in \R}}
	\end{equation}
\end{lemma}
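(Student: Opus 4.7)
The plan is to analyze $K := \ker\chi_g \cap \ker\chi_h$ as a closed subgroup of $\T^3 = \R^3/\Z^3$. Identifying the character as $\chi_g(\xi) = e^{2\pi i g\cdot \xi}$, one has $K = \{\xi \in \T^3 : g\cdot \xi,\, h\cdot \xi \in \Z\}$. Since $g$ and $h$ are linearly independent, the map $\pi : \T^3 \to \T^2$ defined by $\pi(\xi) = (g\cdot \xi,\, h\cdot \xi) \bmod \Z^2$ is a continuous surjective homomorphism (well defined because $g, h \in \Z^3$) with kernel exactly $K$; hence $K$ is a closed subgroup of $\T^3$ whose identity component has dimension $3 - 2 = 1$.

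First I would identify the connected identity component $K_0$ explicitly. Let $v \in \Z^3 \setminus \{0\}$ be the primitive integer vector in the direction of the cross product $g \times h$. Then $g \cdot v = h \cdot v = 0$, so the subtorus $C := \{tv \bmod \Z^3 : t \in \R\}$ is contained in $K$; and since $v$ is primitive, $C$ is a one-dimensional closed connected subgroup of $\T^3$. As $K_0$ also has dimension $1$ and contains $0$, we must have $K_0 = C$. This is the vector $v$ claimed in the statement, and it is orthogonal to both $g$ and $h$ by construction.

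Since $K$ is a compact abelian Lie group and $K_0$ is open in $K$, the quotient $K/K_0$ is a finite abelian group; let $n$ denote its exponent, so that $n\xi \in K_0$ for every $\xi \in K$. Because $K_0 \cong S^1$ is divisible, for each $\xi \in K$ there exists $\eta \in K_0$ with $n\eta = n\xi$; then $\zeta := \xi - \eta$ is $n$-torsion in $\T^3$, so $\zeta$ admits a representative of the form $\lrp{\frac{i}{n},\, \frac{j}{n},\, \frac{k}{n}}$ with $0 \le i, j, k < n$. Writing $\xi = \zeta + \eta \in \lrp{\frac{i}{n},\, \frac{j}{n},\, \frac{k}{n}} + K_0$ yields the required inclusion.

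The only substantive step is the identification of $K_0$ with the rational subtorus through a primitive vector orthogonal to $g$ and $h$; the rest is a clean application of the divisibility of $S^1$ together with the finiteness of $K/K_0$. Since the statement asks only for an inclusion (not an equality) into a union indexed by all $(i,j,k)$ with $0 \le i,j,k < n$, no further bookkeeping about which cosets actually appear is needed.
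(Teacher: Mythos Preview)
Your proof is correct but takes a genuinely different route from the paper's. The paper argues by explicit linear algebra: it forms the $2\times 3$ matrix $M$ with rows $g,h$, observes that $z\in\ker\chi_g\cap\ker\chi_h$ iff any lift $z'\in\R^3$ satisfies $Mz'\in\Z^2$, picks $v$ as an integer generator of the null space of $M$, and then chooses $n$ so that $(n,0),(0,n)\in M(\Z^3)$; fixing preimages $u_1,u_2$ of these, any $w'$ with $Mw'=(a,b)$ is written as $(au_1+bu_2)/n+tv$, whose first term reduces $\bmod\ \Z^3$ to a point in $\tfrac{1}{n}\Z^3$. Your argument instead invokes the structure theory of compact abelian Lie groups: you identify $K_0$ as the rational circle through the primitive $v\perp g,h$, use compactness to get $K/K_0$ finite, and then exploit the divisibility of $K_0\cong S^1$ to split each $\xi\in K$ as (an $n$-torsion point) $+$ (an element of $K_0$). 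The paper's approach is entirely elementary and gives a concrete handle on $n$ in terms of the index of $M(\Z^3)$ in $\Z^2$; yours is cleaner conceptually, generalizes immediately to $\Z^d$, and makes transparent why only rational translates of a single subtorus appear. Both are perfectly adequate for the use made of the lemma later in the paper.
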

\begin{proof}
	Let $g = (g_1, g_2, g_3)$ and $h=(h_1, h_2, h_3)$.
	Consider the $2\times 3$-matrix $M$ over $\Q$ defined as
	\begin{equation}
		M
		=
		\begin{bmatrix}
			g_1 & g_2 & g_3\\
			h_1 & h_2 & h_3
		\end{bmatrix}
	\end{equation}

	Note that an element $z$ of $\T^3 = \R^3/\Z^3$ is in $\ker\chi_g\cap \ker\chi_h$ if and only if some (any) representative $z'$ of $z$ in $\R^3$ satisfies $Mz'\in \Z^2$.
	Since $g$ and $h$ are linearly independent, the rank of $M$ is $2$, and its nullity is therefore $1$.
	Thus, since $M$ is a matrix over $\Q$, we can find a nonzero vector $v$ in $\Q^3$ such that $Mv = 0$.
	Clearly, any such vector $v$ also spans the null-space of $M$ since the null-space of $M$ is $1$-dimensional.
	By scaling $v$ if necessary, we may assume that $v\in \Z^3$.

	Let $\Lambda = M(\Z^3)$, that is, $\Lambda\subseteq \Z^2$ is the image of $\Z^3$ under $M$.
	Since $M$ has rank $2$, we see that $\Lambda$ is a finite index subgroup of $\Z^2$.
	Let $n$ be the smallest positive integer such that $(n, 0)$ and $(0, n)$ are both in $\Lambda$.
	Let $u_1$ and $u_2$ in $\Z^3$ be such that $Mu_1 = (n, 0)$ and $Mu_2 = (0, n)$.
	We will show that
	\begin{equation}
		\ker\chi_g\cap \ker\chi_h
		\subseteq
		\bigcup_{0\leq i, j, k< n} \lrb{ \lrp{ \frac{i}{n}, \frac{j}{n}, \frac{k}{n}} + \set{tv\in \R^3/\Z^3:\ t\in \R}}
	\end{equation}

	Let $w\in \ker\chi_g\cap \ker\chi_h$ be arbitrary.
	Let $w'$ be a representative of $w$ in $\R^3$.
	Then $Mw' = (a, b)$ for some $(a, b)\in \Z^2$.
	Therefore
	\begin{equation}
		Mw' = \frac{a}{n} Mu_1 + \frac{b}{n} Mu_2
	\end{equation}
	giving
	\begin{equation}
		M\lrp{w' - \frac{a}{n}u_1 - \frac{b}{n} u_2} = 0
	\end{equation}
	Thus $w' - (a/n) u_1 - (b/n) u_2$ is in the null-space of $M$, and hence there is $t\in \R$ such that $w' - (a/n) u_1 - (b/n) u_2 = tv$.
	Therefore $w' = (au_1+ bu_2)/n + tv$.
	We can find $0\leq i, j, k< n$ such that $(au_1 + bu_2)/n \equiv (i/n, j/n, k/n) \pmod{\Z^3}$.
	Therefore
	$$
	w' \equiv \lrp{ \frac{i}{n}, \frac{j}{n}, \frac{k}{n}} + tv\pmod{\Z^3}
	$$
	showing the desired containment.
\end{proof}

\subsection{Algebraic Lemmas}

The use of cyclotomic-polynomial divisibility to constrain a tile is classical.
In dimension one it goes back to complementing pairs $A\oplus B=\Z$, where Tijdeman \cite{tijdeman_decomposition_of} used cyclotomic generating functions (the vanishing of $\sum_{a\in A}\zeta^{a}$ at roots of unity and divisibility by the cyclotomic factors of $z^N-1$) to prove that when $|A|$ is a prime power every element of the complement is divisible by that prime.
The relevant divisibility conditions were later distilled by Coven and Meyerowitz \cite{coven_meyerowitz_tiling_integers}, whose criteria completely characterize tilings of $\Z$ by a single finite set when the cardinality has at most two prime factors.
We too rely on cyclotomic divisibility but to a different end.
Rather than classifying tilings of $\Z$ directly, we apply it to a cluster $F\subseteq\Z^3$, forcing divisibility by $p$ of the cardinalities of certain slices of $F$, a structural constraint that feeds the ergodic-theoretic argument of the later sections.

\begin{lemma}
	\label{lemma:cyclotomy divisibility lemma}
	Let $p$ be a prime and $k$ be a positive integer.
	Let $\zeta$ be a primitive $p^k$-th root of unity.
	Suppose there are integers $a_1, \ldots, a_m$ such that
	\begin{equation}
		\zeta^{a_1} + \cdots + \zeta^{a_m} = 0
	\end{equation}
	Then $p$ divides $m$.
\end{lemma}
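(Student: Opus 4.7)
The plan is to use the minimal polynomial of $\zeta$ over $\Q$, namely the cyclotomic polynomial $\Phi_{p^k}(x)$, together with the fact that $\Phi_{p^k}(1)=p$. This last identity follows from the well-known formula $\Phi_{p^k}(x) = 1 + x^{p^{k-1}} + x^{2p^{k-1}} + \cdots + x^{(p-1)p^{k-1}}$ (which has exactly $p$ nonzero terms, each with coefficient $1$).

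First, I would reduce the exponents. Since $\zeta^{p^k} = 1$, I can replace each $a_i$ by its residue modulo $p^k$, so without loss of generality $a_i \in \{0, 1, \ldots, p^k - 1\}$. Let $c_j$ denote the number of indices $i$ with $a_i = j$, for $0 \le j < p^k$. Then $\sum_j c_j = m$, and the hypothesis becomes $\sum_{j=0}^{p^k-1} c_j \zeta^j = 0$.

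Next, consider the polynomial
\begin{equation}
    g(x) = \sum_{j=0}^{p^k - 1} c_j x^j \in \Z[x].
\end{equation}
By construction $g(\zeta) = 0$, so the minimal polynomial $\Phi_{p^k}(x)$ of $\zeta$ divides $g(x)$ in $\Q[x]$. Since $\Phi_{p^k}$ is monic with integer coefficients, Gauss's lemma yields $h(x) \in \Z[x]$ with $g(x) = \Phi_{p^k}(x)\, h(x)$.

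Finally, evaluate at $x = 1$: the left side gives $g(1) = \sum_j c_j = m$, and the right side gives $\Phi_{p^k}(1)\, h(1) = p \cdot h(1)$. Hence $m = p\,h(1)$, and so $p \mid m$. The only nontrivial ingredient is the evaluation $\Phi_{p^k}(1) = p$, which is not really an obstacle but is the step that crucially uses the prime-power structure of the order of $\zeta$ — for a general root of unity of composite order $n$, one has $\Phi_n(1) = 1$ unless $n$ is a prime power, which is why the conclusion fails in general.
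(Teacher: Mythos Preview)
Your proof is correct and follows essentially the same approach as the paper's: form an integer polynomial vanishing at $\zeta$, invoke divisibility by the cyclotomic polynomial $\Phi_{p^k}$, and evaluate at $x=1$ using $\Phi_{p^k}(1)=p$. The only cosmetic differences are that you reduce exponents modulo $p^k$ and explicitly cite Gauss's lemma, whereas the paper simply assumes the $a_i$ are non-negative and writes the factorization directly.
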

\begin{proof}
	Without loss of generality we may assume that the $a_i$'s are all non-negative.
	Define the polynomial $Q(z) = z^{a_1} + \cdots + z^{a_m}$.
	Then since $\zeta$ is a root of $Q(z)$, we have $Q(z)$ is divisible by the $p^k$-th cyclotomic polynomial $\Phi_{p^k}(z)$.
	Let $\Psi(z)$ be an integer polynomial such that $Q(z) = \Phi_{p^k}(z)\Psi(z)$.
	Now using the fact that $\Phi_{p^k}(z) = \Phi_p(z^{p^{k-1}})$ we have $Q(z) = \Phi_p(z^{p^{k-1}})\Psi(z)$.
	Now substituting $z=1$ gives $m = p \Psi(1)$ and hence $p$ divides $m$.
\end{proof}

\begin{definition}
	For any finite subset $S$ of $\Z^3$, we will write $Z(\sum_{g\in S} \chi_{g})$ to mean the set of all the points in $\T^3$ on which $\sum_{g\in S}\chi_{g}$ vanishes.
\end{definition}

\begin{lemma}
	\label{lemma:first spectral support lemma}
	\emph{(See \cite[Lemma 3.2]{bhattacharya_tilings})}
	Let $F\subseteq \Z^3$ be a finite set containing the origin.
	Then there is a finite subset $\Delta$ of $\Z^3\setminus \set{0}$ with pairwise linearly independent elements such that
	\begin{equation}
		Z:=
		\bigcap_{\alpha\text{ coprime to } |F|}
		Z\lrb{\sum_{g\in F} \chi_{\alpha g}}
		\quad \subseteq\quad
		\bigcup_{h\in \Delta} \ker\chi_h
	\end{equation}
\end{lemma}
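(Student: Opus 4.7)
The approach is pointwise: for each $z\in Z$, I locate $z$ in a single $\ker\chi_h$ where $h$ comes from a finite list depending only on $F$. Pairwise linear independence of $\Delta$ will be arranged by the following \emph{lcm-grouping} trick: if several candidate vectors are parallel, say $h_i = c_i v$ with $v\in\Z^3$ primitive, then $\bigcup_i \ker\chi_{h_i}\subseteq \ker\chi_{cv}$ for $c = \lcm_i c_i$, so we may replace the whole bunch by the single vector $cv$. Set $\zeta_g = \chi_g(z)$ for $g\in F$; the hypothesis reads $\sum_{g\in F}\zeta_g^\alpha = 0$ for every integer $\alpha$ coprime to $n:=|F|$, with $\zeta_0 = 1$ since $0\in F$.

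\emph{Collision case.} Partition $F$ by $g\sim g'\iff \zeta_g = \zeta_{g'}$. If two distinct $g_1\neq g_2$ share a block, then $\chi_{g_1-g_2}(z) = 1$, so $z\in\ker\chi_{g_1-g_2}$ with $g_1-g_2\in (F-F)\setminus\{0\}$. The lcm-grouping of this finite set produces a finite, pairwise-linearly-independent set $\Delta_1$ covering every $z$ that falls into this case.

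\emph{Distinct case.} Now assume the $\zeta_g$'s are pairwise distinct. Split indices into $T=\{g:\zeta_g\text{ is a root of unity}\}$ and $I=F\setminus T$, and let $N_T=\lcm_{g\in T}\operatorname{ord}(\zeta_g)$. For $\alpha\equiv 1\pmod{N_T}$ coprime to $n$ (positive density by Dirichlet), the $T$-contribution equals the fixed value $\sum_{g\in T}\zeta_g$, so $\sum_{g\in I}\zeta_g^\alpha$ is constant along this subsequence. Picking a maximal rationally-independent subset $I'\subseteq I$, expressing each $\zeta_g$ ($g\in I$) as a monomial in $(\zeta_h)_{h\in I'}$, and using Kronecker--Weyl density exactly as in the proof of Lemma \ref{lemma:non measure preparatory lemma}, the corresponding Laurent polynomial $f$ in $|I'|$ variables is constant on all of $(S^1)^{|I'|}$; evaluating at the identity forces the constant to be $0$, and since $\sum_g U^{v_g}\equiv |I|$ on a full torus only if $|I|=0$, we conclude $I=\emptyset$. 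Every $\zeta_g$ is then an $N$-th root of unity for $N$ the $\lcm$ of their orders. For primes $p\mid N$ with $p\nmid n$, applying the hypothesis with $\alpha=p$ gives $\sum_g \zeta_g^p = 0$ with $(\zeta_g^p)\in\mu_{N/p}$ still satisfying the dilation condition for $\alpha$ coprime to $n$; iterating kills every prime factor of $N$ that does not divide $n$. For primes $p\mid\gcd(n,N)$, Lemma \ref{lemma:cyclotomy divisibility lemma} applied to the $p$-primary part of $\sum\zeta_g=0$, combined with the fact that the exponents $a_g$ in $\zeta_g = e^{2\pi i a_g/N}$ are controlled by $\operatorname{diam}(F)$ (since $a_g/N = \langle g,z\rangle\bmod 1$), yields a uniform bound $N\leq N_0(F)$. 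Hence $\chi_{N_0 g}(z)=1$ for every $g\in F\setminus\{0\}$; adjoining these finitely many $N_0 g$ to $\Delta_1$ and performing a final lcm-grouping gives the desired $\Delta$.

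The main obstacle I anticipate is the last step in the distinct case: bounding the exponents of the primes $p\mid n$ in the conductor $N$. Lemma \ref{lemma:cyclotomy divisibility lemma} constrains vanishing sums one prime at a time, and combining its outputs with the geometric restriction that the $a_g$'s come from a fixed finite $F$ of bounded diameter requires careful bookkeeping across primes. A cleaner fallback, if the elementary descent stalls, is to factor $\hat F$ over $\Q$ into irreducibles and invoke Laurent's theorem on torsion in algebraic subvarieties of $(\C^*)^3$, which directly confines $z$ to one of finitely many torsion cosets of codimension-one subtori; a final lcm-grouping of the associated normal vectors then produces $\Delta$.
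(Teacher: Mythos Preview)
Your distinct-case argument has a genuine gap. The step ``expressing each $\zeta_g$ ($g\in I$) as a monomial in $(\zeta_h)_{h\in I'}$'' fails in general: a maximal rationally-independent subset need not generate the others as \emph{monomials}, because multiplicative relations among the $\zeta_g$'s can involve nontrivial roots of unity. Concretely, take $F=\{0,e_1,e_2,e_3\}\subseteq\Z^3$ (so $|F|=4$) and $z=(-1,b,-b)\in\T^3$ with $b\in S^1$ not a root of unity. For every odd $\alpha$ one checks
\[
1+(-1)^\alpha+b^\alpha+(-b)^\alpha=1-1+b^\alpha-b^\alpha=0,
\]
so $z\in Z$. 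The four values $\zeta_g\in\{1,-1,b,-b\}$ are pairwise distinct, yet $I=\{e_2,e_3\}$ is nonempty, directly contradicting your conclusion $I=\emptyset$. (Here $-b$ is not a power of $b$, so the monomial representation breaks.) The subsequent conductor bound is likewise hopeless for this $z$, since $b$ has infinite multiplicative order; and the claim that the exponents $a_g$ are ``controlled by $\operatorname{diam}(F)$'' conflates $g$ with $\langle g,z\rangle$, which depends on the unknown $z$.

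The paper's argument avoids all of this with a one-line averaging trick. Let $n$ be the product of the primes dividing $|F|$ and restrict to $\alpha=nk+1$; since $0\in F$ the vanishing hypothesis becomes
\[
\sum_{g\in F\setminus\{0\}}\chi_g(z)\,\big(\chi_{ng}(z)\big)^k=-1\qquad\text{for all }k\ge 0.
\]
Averaging over $k=0,\dots,N-1$ and letting $N\to\infty$, the Ces\`aro mean of $\gamma^k$ vanishes unless $\gamma=1$, so some $g\in F\setminus\{0\}$ must satisfy $\chi_{ng}(z)=1$, i.e.\ $z\in\ker\chi_{ng}$. Thus $Z\subseteq\bigcup_{g\in F\setminus\{0\}}\ker\chi_{ng}$, and your lcm-grouping then yields a pairwise linearly independent $\Delta$. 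In the example above this gives $z\in\ker\chi_{2e_1}$, as indeed $(-1)^2=1$. Your Laurent-theorem fallback would also succeed, but it is heavy machinery for what is really a two-line Ces\`aro computation.
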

\begin{proof}
	Let $n$ be the product of all primes which divide $|F|$.
	Then for each non-negative integer $k$ we have $nk + 1$ is relatively prime to $|F|$.
	Fix $(a, b, c)\in Z$.
	Then we have, for all non-negative integers $k$ that
	\begin{equation}
		\sum_{g\in F} \chi_{(nk+1)g}(a, b, c) = 0
	\end{equation}
	giving
	\begin{equation}
		\sum_{g\in F\setminus\set{0}} \chi_{(nk+1)g}(a, b, c)
		=
		\sum_{g\in F\setminus\set{0}} a^{g_1}b^{g_2}c^{g_3}(a^{ng_1}b^{ng_2}c^{ng_3})^k
		=
		-1
	\end{equation}
	Therefore
	\begin{equation}
		\sum_{g\in F\setminus\set{0}} a^{g_1}b^{g_2}c^{g_3}\lrb{ \frac{1}{N}\sum_{k=0}^{N-1}
			(a^{ng_1}b^{ng_2}c^{ng_3})^k
		}
		=
		-1
	\end{equation}
	for all $N\geq 1$.
	Taking the limit $N\to \infty$ we see that there must exist $g\in F\setminus{\set{0}}$ such that $a^{ng_1}b^{ng_2}c^{ng_3}$ is $1$,\footnote{
		This is because if $z\in S^1$ then the average $(z+z^2+ \cdots + z^N)/N$ does not converge to zero if and only if $z=1$.
	}
	and thus $(a, b, c)\in \ker(\chi_{ng})$.
	Therefore $Z\subseteq \bigcup_{g\in F\setminus \set{0}} \ker\chi_{ng}$.

	Let $\Delta$ be a non-empty subset of $\Z^3\setminus \set{0}$ of smallest possible size such that $Z\subseteq \bigcup_{g\in \Delta} \ker\chi_{g}$.
	Such a $\Delta$ exists by the above paragraph.
	We claim that the elements of $\Delta$ are pairwise linearly independent.
	Suppose not.
	Then there exist distinct $g, h\in \Delta$ such that $g$ and $h$ are linearly dependent.
	We can thus find a nonzero vector $v$ such that $v \in (\Z g)\cap (\Z h)$.
	Let $\Delta' = (\Delta\setminus \set{g, h}) \cup\set{v}$.
	It is clear that 
	\begin{equation}
		\bigcup_{u\in \Delta} \ker\chi_u
		\subseteq
		\bigcup_{u\in \Delta'}\ker\chi_u
	\end{equation}
	and hence $Z \subseteq \bigcup_{u\in \Delta'}\ker\chi_u$.
	But $\Delta'$ has size strictly smaller than the size of $\Delta$ which is a contradiction to the choice of $\Delta$.
	This finishes the proof.
\end{proof}

\begin{lemma}
	\label{lemma:second spectral support lemma}
	Let $F$ be a cluster in $\Z^3$ containing the origin and $p$ be a prime.
	Assume $|F|$ is a power of $p$.
	Let $h$ be an arbitrary nonzero vector in $\Z^3$. 
	Then at least one of the following must happen.
	\begin{enumerate}
		\item Every line in $\R^3$ that is parallel to $h$ intersects $F$ in a set of size divisible by $p$.
		\item There is a finite set $\Gamma\subseteq \Z^3\setminus \set{0}$ such that each element of $\Gamma$ is linearly independent with $h$ and
			\begin{equation}
				\ker\chi_h
				\cap
				\lrp{
					\bigcap_{\alpha\text{ coprime to } |F|}
					Z\lrb{\sum_{g\in F} \chi_{\alpha g}}
				}
				\subseteq
				\bigcup_{v\in \Gamma} \ker\chi_h\cap \ker\chi_v
			\end{equation}
	\end{enumerate}
\end{lemma}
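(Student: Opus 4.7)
The plan is to derive case~(2) from the defining equations of $Z$ by a Vandermonde-style averaging, falling back on case~(1) only when the averaging degenerates. First I would use Lemma~\ref{lemma:glnz acts transitively on the set of all the primitive vectors} and apply a $GL_3(\Z)$-change of coordinates to place $h$ along the third axis; for clarity I focus on the primitive case $h=(0,0,1)$, and I would handle the general case $h=(0,0,d)$ by partitioning $\ker\chi_h$ into the level sets $\{c=\omega\}$, $\omega$ a $d$-th root of unity, and repeating the argument below on each piece. Let $\pi:\Z^3\to\Z^2$ be the projection dropping the last coordinate, set $\tilde F:=\pi(F)$, and for $(x,y)\in\tilde F$ put $F_{x,y}:=\{z:(x,y,z)\in F\}$. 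Any $\R$-line parallel to $h$ meets $F$ in $|F_{x,y}|$ points for some $(x,y)\in\tilde F$, so in these coordinates case~(1) is exactly the statement $p\mid|F_{x,y}|$ for every $(x,y)\in\tilde F$.

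Next I would fix $(a,b,c)\in\ker\chi_h\cap Z$; then $c=1$, and because $|F|$ is a power of $p$ every $\alpha\equiv 1\pmod p$ is coprime to $|F|$. Plugging $\alpha=pk+1$ into the defining equation of $Z$ yields
\begin{equation*}
\sum_{(x,y)\in\tilde F}|F_{x,y}|\,a^xb^y\,(a^{px}b^{py})^k=0\qquad\text{for every integer }k\ge 0.
\end{equation*}
Distinct elements of $S^1$ produce linearly independent exponentials in $k$ (equivalently, by the Ces\`aro-averaging trick already used in the proof of Lemma~\ref{lemma:first spectral support lemma}), so grouping by the common value of $a^{px}b^{py}$ forces, for every $\delta\in S^1$,
\begin{equation*}
\sum_{(x,y)\in\tilde F,\,a^{px}b^{py}=\delta}|F_{x,y}|\,a^xb^y=0.
\end{equation*}

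Finally, I would specialize to $\delta=1$: the pair $(0,0)\in\tilde F$ contributes $|F_{0,0}|\ge 1$ (this is exactly where the assumption $0\in F$ enters), so the vanishing of the full sum forces some $(x,y)\in\tilde F\setminus\{(0,0)\}$ with $a^{px}b^{py}=1$; equivalently $(a,b,c)\in\ker\chi_v$ for $v:=(px,py,0)$, which is manifestly linearly independent from $h=(0,0,1)$. Then the finite set $\Gamma:=\{(px,py,0):(x,y)\in\tilde F\setminus\{(0,0)\}\}$ depends only on $F$ and witnesses case~(2); in the degenerate sub-case $\tilde F=\{(0,0)\}$ we have $\Gamma=\emptyset$, but then $F$ is contained in $\Z h$ and $|F_{0,0}|=|F|$ is a positive power of $p$, so case~(1) is automatic. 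The main technical obstacle I foresee is the non-primitive extension: for $h=(0,0,d)$ with $d>1$ and $c=\omega\ne 1$, the $(0,0)$-contribution in the last step is replaced by the twisted sum $\sum_{z\in F_{0,0}}\omega^z$, which can vanish; dealing with that requires restricting $\alpha$ further to an arithmetic progression mod $ps$ (where $s$ is the order of $\omega^p$) and invoking Lemma~\ref{lemma:cyclotomy divisibility lemma} to track the mod-$p$ multiplicities of the fibers $F_{x,y}$.
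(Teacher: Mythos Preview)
Your primitive case $h=(0,0,1)$ is correct and is essentially the paper's argument specialized to $\omega=1$: there the fiber weights collapse to $|F_{x,y}|>0$, so the origin always serves as a nonvanishing pivot in the averaging and case~(1) never needs to enter. The overall strategy (reduce to the third axis, plug an arithmetic progression of admissible $\alpha$'s into the equations of $Z$, group by fibers over $\Z^2$, then Ces\`aro-average) is exactly the paper's.

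The non-primitive sketch, however, has two genuine gaps. First, freezing $\omega^\alpha$ along your progression mod $ps$ is not enough to make Lemma~\ref{lemma:cyclotomy divisibility lemma} applicable: with the natural residue $\alpha\equiv 1$ you get $\omega^\alpha=\omega$, whose order $d$ may have prime factors other than $p$, and then the vanishing of a twisted fiber sum $\sum_{z\in F_{x,y}}\omega^z$ tells you nothing about $|F_{x,y}|\bmod p$. The paper's remedy is to write $d=p^r\beta$ with $\gcd(\beta,p)=1$ and take $\alpha=(mpk+1)\beta$, so that $\omega^\alpha=\omega^\beta=:\zeta$ is a primitive $p^r$-th root and the cyclotomy lemma applies. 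Second, even after this fix, your pivot at $(0,0)$ can fail: the twisted sum $\theta_{(0,0)}=\sum_{z\in F_{0,0}}\zeta^z$ may well be zero while case~(1) still fails (it is some \emph{other} fiber whose size is prime to $p$). The paper handles this by choosing any equivalence class $E_0$ with $\theta_{E_0}\neq 0$---whose existence is exactly what Lemma~\ref{lemma:cyclotomy divisibility lemma} guarantees once case~(1) is assumed false---and multiplying the whole identity by $\chi_{-mnk\beta\hat g^{E_0}}$ \emph{before} averaging, so that the surviving constant term is $\gamma_{E_0}\neq 0$. Without this pivot shift your $\delta=1$ specialization can be vacuously $0=0$, and the argument stalls. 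Both steps are short once seen, but neither is present in your outline.
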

\begin{proof}
	Using Lemma \ref{lemma:glnz acts transitively on the set of all the primitive vectors}, after applying a suitable $GL_3(\Z)$ transformation, we may assume that $h=(0, 0, m)$ for some positive integer $m$.\footnote{
		This is because of the following.
		For any linear map $T:\R^3\to \R^3$ such that $T\in GL_3(\Z)$, and any $F\subseteq\Z^3$ finite, we have $Tp\in Z\lrp{\sum_{g\in F} \chi_g}$ if and only if $(T^{-1})^*p\in Z\lrp{\sum_{g\in T(F)}\chi_{g}}$.
	}
	Let $(a, b, c)$ be an arbitrary element in $\ker\chi_h\cap Z$, where
	\begin{equation}
		Z=\bigcap_{\alpha\text{ coprime to } |F|} Z\lrb{\sum_{g\in F} \chi_{\alpha g}}
	\end{equation}
	Let $h_0=(0, 0, 1)$.
	Then
	\begin{equation}
		\ker\chi_h = \bigcup_{\omega\in S^1:\ \omega^m=1} (1, 1, \omega)\cdot \ker_{\chi_{h_0}}
	\end{equation}
	Thus there is an $m$-th root of unity $\omega$ such that $(a, b, c) \in ((1, 1, \omega)\cdot \ker\chi_{h_0})\cap Z$.
	Then $c=\omega$ and hence
	\begin{equation}
		\sum_{g\in F} \chi_{\alpha g}(a, b, \omega) = 0
	\end{equation}
	for all $\alpha$ relatively prime to $|F|$.

	Let $d$ be a positive integer such that $\omega$ is a primitive $d$-th root of unity.
	Let $d = p^r\beta$ where $\beta$ is relatively prime to $p$. 
	Let $n$ be the product of all the primes dividing $|F|$.
	For all non-negative integers $k$, we have $mn k + 1$ is relatively prime to $|F|$.
	Using the fact that $|F|$ is a power of $p$, we have $(mnk+1)\beta$ is also relatively prime to $|F|$.
	So we have, by substituting $\alpha =(mnk+1)\beta$, that
	\begin{equation}
		\label{equation:conundrum}
		\sum_{g\in F} \chi_{(mnk+1)\beta g}(a, b, \omega)
		=
		\sum_{g\in F} a^{\beta g_1} b^{\beta g_2}\omega^{\beta g_3} \chi_{mnk\beta(g_1, g_2)}(a, b)
		=
		0
	\end{equation}
	for all non-negative integers $k$.
	Let $\zeta = \omega^\beta$, and hence $\zeta$ is a primitive $p^r$-th root of unity.
	Let $\sim$ be a relation on $F$ defined as $g\sim g'$ for $g, g'\in F$ if and only if $(g_1, g_2) = (g_1', g_2')$.
	Then $\sim$ is an equivalence relation, and let $\mc E$ be the set of all the equivalence classes of $\sim$.\footnote{Note that the equivalence classes are precisely the sets that are obtained by intersecting $F$ with a line parallel to $h$.}
	For each $E\in \mc E$ choose an element $g^E$ of $E$.
	For any $v\in \Z^3$ let $\hat v$ denote the vector in $\Z^2$ obtained by dropping the last coordinate of $v$.
	Then Equation \ref{equation:conundrum} gives
	\begin{equation}
		\label{equation:second conundrum}
		\sum_{E\in \mc E}\lrb{ \lrp{\sum_{g\in E} \zeta^{g_3}}\chi_{\beta \hat g^E}(a, b) \chi_{mnk\beta \hat g^E}(a, b)}
		=
		0
	\end{equation}
	for all $k$ non-negative.
	Let $\theta_E = \sum_{g\in E} \zeta^{g_3}$ and $\gamma_E= \theta_E \chi_{\beta \hat g^E}(a, b)$ for all $E$ in $\mc E$.
	If $\theta_E$ is $0$ for all $E\in \mc E$, then by Lemma \ref{lemma:cyclotomy divisibility lemma} we have each $|E|$ is divisible by $p$.
	This would mean precisely that (1) holds. 
	So we may assume that there is some $E_0\in \mc E$ such that $\theta_{E_0}$, or equivalently $\gamma_{E_0}$, is nonzero.
	Then from Equation \ref{equation:second conundrum} we have
	\begin{equation}
		\chi_{-mnk \beta\hat g^{E_0}}(a, b)\lrb{\sum_{E\in \mc E} \gamma_E \chi_{mnk\beta \hat g^E}(a, b)}
		=
		0
	\end{equation}
	which gives
	\begin{equation}
		\gamma_{E_0} + \sum_{E\in \mc E\setminus\set{E_0}} \gamma_E \chi_{mnk\beta (\hat g^E - \hat g^{E_0})}(a, b)
		=
		\gamma_{E_0} + \sum_{E\in \mc E\setminus\set{E_0}} \gamma_E \chi_{mn\beta (\hat g^E - \hat g^{E_0})}(a, b)^k
		=
		0
	\end{equation}
	Averaging we have
	\begin{equation}
		\gamma_{E_0} + \sum_{E\in \mc E\setminus\set{E_0}} \gamma_E\lrb{ \frac{1}{N}\sum_{k=0}^{N-1}\chi_{mn\beta (\hat g^E - \hat g^{E_0})}(a, b)^k}
		=
		0
	\end{equation}
	Taking limit $N\to \infty$, we must have, since $\gamma_{E_0}\neq 0$, that $(a, b)\in \ker\chi_{mn\beta (\hat g^{E} - \hat g^{E_0})}$ for some $E\neq E_0$ in $\mc E$.
	Set
	\begin{equation}
		h^E = mn\beta\,(g^{E}_1-g^{E_0}_1,\ g^{E}_2 - g^{E_0}_2,\ 1).
	\end{equation}
	Since $\hat g^E - \hat g^{E_0}\neq 0$, this vector is linearly independent with $h=(0,0,m)$, and $(a,b,c)\in\ker\chi_{h^E}$: indeed
	\begin{equation}
		\chi_{h^E}(a,b,c) = \chi_{mn\beta(\hat g^E - \hat g^{E_0})}(a,b)\cdot c^{mn\beta} = 1\cdot \omega^{mn\beta} = 1,
	\end{equation}
	the last step because $\omega^m=1$.

	It remains to collect these vectors into a single \emph{finite} set $\Gamma$ valid for every point of $\ker\chi_h\cap Z$, as condition (2) requires. Two facts make this possible. First, the classes $E, E_0$ lie in the finite set $\mc E$, so the differences $(g^E_1 - g^{E_0}_1,\ g^E_2 - g^{E_0}_2)$ take only finitely many values. Second, the integer $\beta$ is bounded: by construction $c=\omega\in\ker\chi_h$, so $\omega^m=1$, that is, $\omega$ is an $m$-th root of unity; hence its order $d$ divides $m$, and so does its $p$-free part $\beta$. Therefore every vector produced above lies in the finite set
	\begin{equation}
		\Gamma = \set{ mn\beta\,(g_1 - g_1',\ g_2 - g_2',\ 1) :\ \beta\text{ a positive divisor of }m,\ g, g'\in F,\ (g_1, g_2)\neq (g_1', g_2') },
	\end{equation}
	each member of which is linearly independent with $h$. We have shown that every $(a,b,c)\in\ker\chi_h\cap Z$ lies in $\ker\chi_h\cap\ker\chi_v$ for some $v\in\Gamma$; that is, condition (2) holds. This completes the proof.
\end{proof}

\begin{lemma}
	\label{lemma:spectral support lemma used in case two}
	Let $F$ be a cluster in $\Z^3$ containing the origin and $p$ be a prime.
	Assume $|F|$ is a power of $p$.
	Let $h$ be an arbitrary nonzero vector in $\Z^3$. 
	Then at least one of the following must happen.
	\begin{enumerate}
		\item Every line in $\R^3$ that is parallel to $h$ intersects $F$ in a set of size divisible by $p$.
		\item There is a finite set $R$ of rational points in $\R^3/\Z^3$ and a finite set $V$ of nonzero vectors in $\Z^3$ such that 
			\begin{equation}
				\ker\chi_h
				\cap
				\lrp{
					\bigcap_{\alpha\text{ coprime to } |F|}
					Z\lrb{\sum_{g\in F} \chi_{\alpha g}}
				}
				\subseteq
				\bigcup_{\rho\in R}\bigcup_{v\in V}  \set{\rho + tv \in \R^3/\Z^3:\ t\in \T}
			\end{equation}
	\end{enumerate}
\end{lemma}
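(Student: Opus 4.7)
The statement is essentially a refinement of Lemma \ref{lemma:second spectral support lemma}: the earlier lemma gives a containment inside a finite union of sets of the form $\ker\chi_h \cap \ker\chi_v$, where each $v$ is linearly independent from $h$, and we want to upgrade each such intersection to a finite union of rational translates of one-dimensional subtorii of $\T^3$. Once this geometric upgrade is carried out, the two finite sets $R$ (rational points) and $V$ (direction vectors) can be collected from each individual $v\in \Gamma$ and unioned.

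The plan is therefore as follows. First, I would apply Lemma \ref{lemma:second spectral support lemma} with the same data $F$, $p$, $h$. If alternative (1) holds there, we are immediately done since (1) of the present lemma is identical to (1) of Lemma \ref{lemma:second spectral support lemma}. Otherwise, alternative (2) of Lemma \ref{lemma:second spectral support lemma} produces a finite set $\Gamma \subseteq \Z^3\setminus\{0\}$ with each element linearly independent from $h$, such that
\begin{equation}
    \ker\chi_h \cap \Biggl(\bigcap_{\alpha\text{ coprime to }|F|} Z\lrb{\sum_{g\in F}\chi_{\alpha g}}\Biggr) \subseteq \bigcup_{v\in \Gamma} \bigl(\ker\chi_h \cap \ker\chi_v\bigr).
\end{equation}

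Next, for each $v\in \Gamma$, since $h$ and $v$ are linearly independent nonzero vectors in $\Z^3$, I can invoke Lemma \ref{lemma:intersection of two kernels} to produce a nonzero vector $w_v\in \Z^3$ and a positive integer $n_v$ such that
\begin{equation}
    \ker\chi_h \cap \ker\chi_v \subseteq \bigcup_{0\leq i,j,k<n_v} \lrb{\lrp{\tfrac{i}{n_v}, \tfrac{j}{n_v}, \tfrac{k}{n_v}} + \set{tw_v\in \R^3/\Z^3 : t\in \R}}.
\end{equation}
Each such set is a finite union of rational translates of the one-dimensional subtorus spanned by $w_v$.

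Finally, I would define $V = \set{w_v : v\in \Gamma}$ and let $R$ be the union over $v\in \Gamma$ of the finite sets of rational points $\set{(i/n_v, j/n_v, k/n_v) : 0\leq i,j,k<n_v}$ arising from the previous step. Both $V$ and $R$ are finite since $\Gamma$ is finite and each $n_v$ is finite, and taking the union of the containments over $v\in \Gamma$ yields conclusion (2) of the present lemma. There is no real obstacle here; the entire argument is a routine composition of the two cited lemmas, with Lemma \ref{lemma:second spectral support lemma} doing the analytic work and Lemma \ref{lemma:intersection of two kernels} doing the purely geometric conversion from pairs of kernels to rational translates of one-dimensional subtorii.
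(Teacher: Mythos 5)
Your proposal is correct and follows exactly the same route as the paper: first invoke Lemma \ref{lemma:second spectral support lemma} to reduce to a finite union of intersections $\ker\chi_h\cap\ker\chi_v$ with $v$ linearly independent from $h$, then apply Lemma \ref{lemma:intersection of two kernels} to each such intersection to obtain the rational-translate-of-line description, and finally collect the resulting finite sets $R$ and $V$ by taking unions over $\Gamma$.
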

\begin{proof}
	Assume (1) does not hold.
	Then by Lemma \ref{lemma:second spectral support lemma} we can find a finite set $\Gamma\subseteq \Z^3\setminus \set{0}$ such that each element of $\Gamma$ is linearly independent with $h$ and
	\begin{equation}
		\ker\chi_h
		\cap
		\lrp{
			\bigcap_{\alpha\text{ coprime to } |F|}
			Z\lrb{\sum_{g\in F} \chi_{\alpha g}}
		}
		\subseteq
		\bigcup_{u\in \Gamma} \ker\chi_h\cap \ker\chi_u
	\end{equation}
	Using Lemma \ref{lemma:intersection of two kernels} we know that for each $u\in \Gamma$ there is a finite set of rational points $R_u$ and a nonzero vector $v_u$ such that
	\begin{equation}
		\ker\chi_h\cap \ker\chi_u
		\subseteq
		\bigcup_{\rho\in R_u}\set{\rho + tv_u\in \R^3/\Z^3:\ t\in \R}
	\end{equation}
	Define $R=\bigcup_{u\in \Gamma} R_u$ and $V = \set{v_u:\ u\in \Gamma}$.
	It follows that
	\begin{equation}
		\bigcup_{u\in \Gamma} \ker\chi_h\cap \ker\chi_u
		\subseteq 
		\bigcup_{\rho\in R}\bigcup_{v\in V} \set{\rho + tv\in \R^3/\Z^3:\ t\in \R}
	\end{equation}
	whence condition (2) immediately holds.
\end{proof}

\begin{lemma}
	\label{lemma:main technical precursor to main}
	Let $F\subseteq \Z^3$ be a cluster containing the origin and $p$ be a prime.
	Let $\rho$ be a rational point in $\R^3/\Z^3$ and $v$ be a nonzero vector in $\Z^3$ such that infinitely many points of the set
	\begin{equation}
		\set{\rho + vt\in \R^3/\Z^3:\ t\in \R}
	\end{equation}
	are in
	\begin{equation}
		\label{equation:partition equations}
		Z = \bigcap_{\alpha\text{ coprime to } p} Z\lrb{\sum_{g\in F} \chi_{\alpha g}}
	\end{equation}
	Then any plane parallel to $\set{w\in \R^3:\ \ab{w, v} = 0}$ intersects $F$ in a set of size divisible by $p$.
\end{lemma}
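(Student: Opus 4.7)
The plan is to transfer the geometric hypothesis into a Fourier-analytic statement on a finite cyclic group. For each integer $\alpha$ coprime to $p$, I would consider the function
\begin{equation*}
f_\alpha(t) := \sum_{g\in F}\chi_{\alpha g}(\rho + tv) = \sum_{g\in F}\chi_{\alpha g}(\rho)\, e^{2\pi i \alpha t \ab{g, v}}
\end{equation*}
on $\R$. Since $v\in \Z^3$, one computes $f_\alpha(t+1) = f_\alpha(t)$, so $f_\alpha$ is a $1$-periodic trigonometric polynomial and in particular entire. The hypothesis gives infinitely many real zeros of $f_\alpha$, which by $1$-periodicity must accumulate in $[0,1]$, forcing $f_\alpha\equiv 0$. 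Grouping the terms by $\lambda := \ab{g, v}$ and using the linear independence of the exponentials $\set{e^{2\pi i \alpha \lambda t}}_{\lambda\in \Z}$ for $\alpha\neq 0$, this gives
\begin{equation*}
\sum_{g\in F_\lambda}\chi_{\alpha g}(\rho) = 0
\end{equation*}
for every $\lambda\in \Z$ and every $\alpha$ coprime to $p$, where $F_\lambda := F\cap \set{w\in \R^3:\ \ab{w, v}=\lambda}$.

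Next I would make $\rho$ explicit. Writing $\rho = (a_1, a_2, a_3)/N$ with $N$ a positive integer and $a_i\in \Z$, and setting $\omega = e^{2\pi i / N}$ and $b_g := \ab{g, N\rho}\in \Z$, the previous relation becomes
\begin{equation*}
\sum_{g\in F_\lambda} \omega^{\alpha b_g} = 0
\end{equation*}
for all $\alpha$ coprime to $p$. Defining $c_\lambda(k) := |\set{g\in F_\lambda:\ b_g\equiv k \pmod N}|$ and $\widehat{c_\lambda}(\alpha) := \sum_{k\in \Z/N\Z} c_\lambda(k)\omega^{\alpha k}$, this says $\widehat{c_\lambda}(\alpha) = 0$ whenever the residue $\alpha\in \Z/N\Z$ admits an integer representative coprime to $p$.

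Finally, I conclude by Fourier inversion on $\Z/N\Z$. If $p\nmid N$, then $N$ is invertible mod $p$, so every residue mod $N$ admits a representative coprime to $p$; hence $\widehat{c_\lambda}\equiv 0$, forcing $c_\lambda\equiv 0$ and $F_\lambda = \emptyset$. If $p\mid N$, then the Fourier support of $c_\lambda$ lies in $p\Z/N\Z$, so the inversion formula shows that $c_\lambda(k)$ depends only on $k\bmod(N/p)$; hence $|F_\lambda| = \sum_{k=0}^{N-1} c_\lambda(k) = p\sum_{k=0}^{N/p - 1} c_\lambda(k)$ is divisible by $p$. Since $F_\lambda$ is precisely the intersection of $F$ with the plane $\set{w\in \R^3:\ \ab{w, v} = \lambda}$, this yields the desired conclusion.

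The delicate step is the very first: deducing $f_\alpha\equiv 0$ from infinitely many real zeros uses both the integrality of $v$ (which makes $f_\alpha$ periodic, so that infinitely many real zeros must accumulate) and the analyticity of $f_\alpha$. Once the problem is reduced to a vanishing statement for $\widehat{c_\lambda}$ on $\Z/N\Z$, the remaining case analysis based on whether $p$ divides $N$ is standard Fourier analysis.
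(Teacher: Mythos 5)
Your proof is correct, and it takes a genuinely different route from the paper's. The paper writes $\rho = rh_0$ with $h_0$ primitive and $r=c/d$ in lowest terms, then substitutes the single value $\alpha=\beta$ where $d=p^m\beta$ with $\gcd(\beta,p)=1$; after grouping by $\ab{g,v}$, the vanishing coefficients become vanishing sums of primitive $p^m$-th roots of unity, and the conclusion is extracted from Lemma~\ref{lemma:cyclotomy divisibility lemma} (a cyclotomic-polynomial fact). You instead keep \emph{all} admissible $\alpha$ in play, write $\rho$ with denominator $N$, package the resulting vanishing as $\widehat{c_\lambda}(\alpha)=0$ for every residue class admitting a representative coprime to $p$, and then run Fourier inversion on $\Z/N\Z$: the transform being supported on $p\Z/N\Z$ forces $(N/p)$-periodicity of $c_\lambda$, hence $p\mid |F_\lambda|$. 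What the paper's route buys is brevity — it calls one existing lemma after a single substitution. What your route buys is a more systematic, self-contained argument in finite abelian Fourier analysis that doesn't need the cyclotomic-polynomial fact as a black box. Two small remarks: first, after deducing that each $f_\alpha$ vanishes identically you should note that $\alpha\neq 0$ (automatic since $\alpha$ is coprime to $p$) so that the frequencies $\alpha\lambda$ for distinct $\lambda$ are indeed distinct; second, your Case~$p\nmid N$ actually forces $F=\emptyset$, contradicting $0\in F$, so that case cannot occur — the argument remains formally valid because $|\emptyset|=0$ is divisible by $p$, but it would read more smoothly if you pointed out that this case is vacuous (this is the analogue of the step $m\geq 1$ in the paper's proof).
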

\begin{proof}
	Let $h_0$ be a primitive vector in $\Z^3\setminus \set{0}$ and $r$ be a rational number such that $\rho = r h_0$ in $\R^3/\Z^3$.
	Then, by hypothesis, infinitely many points of
	\begin{equation}
		\set{rh_0+ vt\in \R^3/\Z^3:\ 0\leq t< 1}
	\end{equation}
	are in $Z$.
	Let $r=c/d$, where $c$ and $d$ are relatively prime integers. 
	We get, for each $\alpha$ coprime to $p$, 
	\begin{equation}
		\sum_{g\in F} e^{2\pi i r\alpha \ab{g, h_0}}e^{2\pi i \alpha \ab{g, v} t} = 0
	\end{equation}
	for infinitely many $t\in [0, 1)$.
	Therefore, the (Laurent) polynomial
	\begin{equation}
		\label{equation:intermediate equation}
		\sum_{g\in F} e^{2\pi i r \alpha \ab{g, h_0}} z^{\alpha \ab{g, v}}
	\end{equation}
	is satisfied by infinitely many $z\in S^1$ for each $\alpha$ coprime to $p$.
	Let $d= p^m \beta$, where $m$ is a non-negative integer and $\beta$ is coprime to $p$.
	Thus, by Equation \ref{equation:intermediate equation} the polynomial
	\begin{equation}
		\sum_{g\in F} e^{2\pi i r \beta \ab{g, h_0}} z^{\beta \ab{g, v}}
		=
		\sum_{g\in F} e^{2\pi i \ab{g, h_0} c/p^m} z^{\beta \ab{g, v}}
	\end{equation}
	has infinitely many solutions in $S^1$ and is hence identically zero. 
	Define an equivalence relation $\sim_v$ on $F$ by writing $g_1\sim_v g_2$ for $g_1, g_2\in F$ if $\ab{g_1, v}=\ab{g_2, v}$.
	Let $F_1, \ldots, F_l$ be all the equivalence classes in $F$.
	The coefficients of the above polynomial are
	\begin{equation}
		\label{equation:coefficients of the polynomial}
		\sum_{g\in F_j} e^{2\pi i \ab{g, h_0} c/p^m}, \quad j=1, \ldots, l.
	\end{equation}
	and hence each of these terms is $0$.
	If $m=0$ then this cannot happen since $\sum_{g\in F_j} e^{2\pi i \ab{g, h_0}c} = |F_j|$, and hence we must have $m\geq 1$.
	This implies that $p$ divides $d$ and hence, since $c$ is relatively prime to $d$, we have $c$ is coprime to $p$.
	Let $\zeta$ denote the complex number $e^{2\pi i/p^m}$.
	By Equation \ref{equation:coefficients of the polynomial} we have $\zeta$ is a root of each of the (Laurent) polynomials
	\begin{equation}
		\sum_{g\in F_j} z^{\ab{g, h_0}c}, \quad j= 1, \ldots, l
	\end{equation}
	Therefore by Lemma \ref{lemma:cyclotomy divisibility lemma} we have that each $|F_j|$ is divisible by $p$.
	This implies that any plane parallel to $\set{w\in \R^3:\ \ab{w, v} = 0}$ intersects $F$ in a set of cardinality divisible by $p$ and we are done.
\end{proof}

\section{The Periodicity Result}
\label{section:the periodicity result}
\subsection{Main Theorem}
\label{subsection:main theorem}

\begin{theorem}
	\label{theorem:prime square theorem}
	Let $F$ be an exact cluster in $\Z^3$ with cardinality $p^2$, where $p$ is a prime.
	Then there is a $1$-weakly periodic $F$-tiling.
\end{theorem}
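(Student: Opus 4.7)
The plan is to build a dynamical system from the given $F$-tiling, pass to the spectral side via the Spectral Theorem, and analyze the support of the resulting spectral measure, in the spirit of \cite{bhattacharya_tilings}. Let $X \subseteq \set{0,1}^{\Z^3}$ be the orbit closure of $\one_T$, let $\mu$ be any $\Z^3$-invariant probability measure on $X$, and let $A = \set{x \in X : x(0) = 1}$. By Lemma~\ref{label:bhattacharya correspondence principle} it suffices to show that $A$ is $1$-weakly periodic as a measurable subset of $X$. Applying the Spectral Theorem~\ref{theorem:spectral theorem} to the cyclic subspace generated by $\one_A$ produces a spectral measure $\nu$ on $\T^3$; the tiling identity for $F$, together with the Dilation Lemma~\ref{lemma:dilation lemma for higher level tilings} applied to $\alpha T$ for every $\alpha$ coprime to $|F| = p^2$, forces $\nu$ restricted to $\T^3 \setminus \set{0}$ to be supported inside
\[
Z \;=\; \bigcap_{\alpha\text{ coprime to }p} Z\!\left(\sum_{g \in F} \chi_{\alpha g}\right).
\]

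Next I analyze the geometry of $Z$. By Lemma~\ref{lemma:first spectral support lemma}, $Z \subseteq \bigcup_{h \in \Delta} \ker \chi_h$ for a finite set $\Delta$ of pairwise linearly independent nonzero vectors in $\Z^3$. For each $h \in \Delta$, Lemma~\ref{lemma:spectral support lemma used in case two} gives one of two alternatives: either (1) every line in $\R^3$ parallel to $h$ meets $F$ in a set of cardinality divisible by $p$ (equivalently, by Lemma~\ref{lemma:first combonatorial lemma precursor}, every plane parallel to $h$ does); or (2) $\ker \chi_h \cap Z$ is contained in a finite union of rational $1$-dimensional affine subtori of $\T^3$.

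The proof then splits into two cases. In the \emph{benign case}, alternative (2) holds for every $h \in \Delta$, so $Z$ lies in finitely many $1$-dimensional affine subtori; the spectral measure is then thin enough for the Bhattacharya-style averaging and polynomial-equidistribution argument of \cite{bhattacharya_tilings} to run, which is carried out in three dimensions in Section~\ref{section:bhattachayas proof in three dimension}. In the \emph{adverse case}, some $h_1 \in \Delta$ fails (2) and hence satisfies (1), giving one plane condition parallel to $h_1$. The strategy is to extract a second, transverse plane condition and invoke Lemma~\ref{lemma:first combinatorial lemma}, which will yield that $F$ is a prism with foundation of prime size $p$; Lemma~\ref{lemma:tiling by a prism with prime base} then produces a $2$-periodic (hence $1$-weakly periodic) $F$-tiling. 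To obtain the second condition, I apply Lemma~\ref{lemma:spectral support lemma used in case two} to any $h_2 \in \Delta$ linearly independent from $h_1$: if $h_2$ also satisfies (1), we are done immediately; otherwise $h_2$ satisfies (2) and furnishes a direction $v$ of some $1$-dim subtorus inside $\ker\chi_{h_2} \cap Z$, at which point Lemma~\ref{lemma:main technical precursor to main} yields a plane condition transverse to $v$, and with a little bookkeeping one arranges this to be transverse to $h_1$ as well.

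The main obstacle I foresee is the adverse case, specifically ensuring that the second plane condition can genuinely be chosen transverse to $h_1$ (one must rule out a degenerate configuration in which every candidate direction accidentally lies in $h_1^\perp$ in the wrong way). Here the hypothesis $|F| = p^2$ is crucial: it is what makes the conclusion of Lemma~\ref{lemma:first combinatorial lemma} a prism with foundation of exactly \emph{prime} size $p$, which in turn is what lets Theorem~\ref{theorem:uniform periodicity of prime cardinality clusters in two dimensions} be invoked inside Lemma~\ref{lemma:tiling by a prism with prime base} to deliver the $2$-periodic tiling.
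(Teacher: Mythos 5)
Your overall architecture (dynamical formulation, spectral measure, Dilation Lemma, support contained in $\bigcup_{h\in\Delta}\ker\chi_h$, then casework on which $h$ give the divisibility condition on lines, reducing to either the prism lemma or the Bhattacharya-style equidistribution argument) matches the paper's. However, your two-way dichotomy into a ``benign'' case and an ``adverse'' case collapses three genuinely different situations into two, and the one you drop is the one that requires a new argument not present anywhere else in your sketch.

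Concretely, in your adverse case you have $h_1\in\Delta$ satisfying alternative (1) of Lemma~\ref{lemma:spectral support lemma used in case two}. You then take $h_2\in\Delta$ linearly independent from $h_1$; if $h_2$ also satisfies (1), fine, and if $h_2$ satisfies (2) you claim to obtain a direction $v$ from one of the affine $1$-tori covering $\ker\chi_{h_2}\cap Z$ and feed it into Lemma~\ref{lemma:main technical precursor to main}. But that lemma requires that \emph{infinitely many} points of the affine line $\{\rho+tv\}$ lie in $Z$. Alternative (2) only tells you that $\ker\chi_{h_2}\cap Z$ is \emph{contained} in finitely many such lines; it gives no lower bound on $|\ker\chi_{h_2}\cap Z|$. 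In fact it can perfectly well happen that, away from $\ker\chi_{N h_1}$, every $\ker\chi_h\cap Z$ with $h\in\Delta\setminus\{h_1\}$ is finite. Then Lemma~\ref{lemma:main technical precursor to main} is unavailable for every choice of $v$, your ``second transverse plane condition'' does not exist, $F$ need not be a prism, and the reduction to Lemma~\ref{lemma:tiling by a prism with prime base} fails. This is precisely the paper's Subcase 2.2, and it is dealt with by a qualitatively different argument: after shrinking the support to $\ker\chi_{g_0}\cup S$ with $S$ finite and $\chi_{g_0}$ irrational on $S$, one passes back to $L^2(X,\mu)$, uses that $(ng_0)\cdot f - f$ is integer-valued, and invokes Lemma~\ref{lemma:non measure preparatory lemma} to force $g_0\cdot f=f$, giving a $1$-periodic point in the orbit closure. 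Nothing in your outline produces this argument or an equivalent. (Secondarily, ``with a little bookkeeping one arranges this to be transverse to $h_1$'' hides the actual content of the paper's Subcase 2.1, where one must show $v$ is not orthogonal to $g_0$ by first excising $\ker\chi_{N g_0}$ and tracking infinitude outside it; and you should take $\mu$ ergodic, not merely invariant, as the ergodic-theoretic steps rely on it.)
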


\subsection{Proof}

\subsubsection{Notation}
\label{subsubsection:notation in the proof of main theorem}

Let $X\subseteq \set{0, 1}^{\Z^3}$ be the set of all the $F$-tilings and $\mu$ be a $\Z^3$-ergodic probability measure on $X$, which we may assume to be concentrated on the orbit closure of a given $F$-tiling $T$.
We define $A=\set{x\in X:\ x(0) = 1}$ and $f\in L^2(X, \mu)$ as the characteristic function of $A$.
Let $\nu$ be the spectral measure associated to the unit vector $f/\norm{f}_2$ in $L^2(X, \mu)$ and $\theta$ be the unitary isomorphism between $H_f$ --- the span closure of the orbit of $f/\norm{f}_2$ --- and $L^2(\T^3, \nu)$ as discussed in Theorem \ref{theorem:spectral theorem}.
\emph{We will use $\T$ to denote the unit circle and not $\R/\Z$.}

By Lemma \ref{lemma:dilation lemma for higher level tilings} we have $T$ is an $(\alpha F)$-tiling whenever $\alpha$ is relatively prime to $p$.
Thus

\begin{equation}
	\sum_{g\in F} 1_{\alpha g A} = \sum_{g\in F} (\alpha g)\cdot 1_A = 1_X 
\end{equation}
for all $\alpha$ relatively prime to $p$.
Now $1_X$ is $\Z^3$-invariant in $L^2(X, \mu)$, and hence $\theta(1_X)$ is $\Z^3$-invariant in $L^2(\T^3, \nu)$.
But the only $\Z^3$-invariant members of $L^2(\T^3, \nu)$ are the ones in the span of $\delta_{\set{1}}$---the Dirac function concentrated at the identity of $\T^3$.
Thus, applying $\theta$, we get
\begin{equation}
	\sum_{g\in F} \chi_{\alpha g} = c \delta_{\set{1}}, \quad \forall\alpha \textup{ coprime to } p
\end{equation}
for some constant $c$.
Therefore, the spectral measure $\nu$ is supported on $Z\cup\set{1}$, where
\begin{equation}
	Z = \bigcap_{\alpha\text{ coprime to } p} Z\lrb{\sum_{g\in F} \chi_{\alpha g}}
\end{equation}
By Lemma \ref{lemma:first spectral support lemma} we know that there is a finite set $\Delta\subseteq \mathbb Z^3\setminus \set{0}$ whose elements are pairwise linearly independent such that the spectral measure $\nu$ associated to $f/\norm{f}_2$ is supported on $\bigcup_{g\in \Delta}\ker(\chi_g)$.

\subsubsection{Case 1: There exist at least two distinct members $g_0$ and $g_1$ in $\Delta$ such that every line parallel to either $g_0$ or $g_1$ intersects $F$ in a set of cardinality divisible by $p$}
By Lemma \ref{lemma:first combinatorial lemma} we deduce that $F$ is a prism with prime foundation and hence by Lemma \ref{lemma:tiling by a prism with prime base} we see that there is a $3$-periodic $F$-tiling, which is in particular $1$-weakly periodic.

\subsubsection{Case 2: There is exactly one $g_0$ in $\Delta$ such that every line parallel to $g_0$ intersects $F$ in a set of cardinality divisible by $p$.} 
In this case, we see that for each $g\in \Delta\setminus \set{g_0}$ the condition (1) in Lemma \ref{lemma:spectral support lemma used in case two} is not satisfied, and hence by Lemma \ref{lemma:spectral support lemma used in case two} condition (2) must be satisfied.
Thus for each $g\in \Delta\setminus \set{g_0}$ we can find a finite set $R_g$ of rational points in $\R^3/\Z^3$ and a finite set of vectors $V_g$ such that
\begin{equation}
	\label{equation:an equation in case two point one}
	Z\cap \ker\chi_g \subseteq \bigcup_{\rho\in R_g}\bigcup_{v\in V_g}\set{\rho+ tv\in \R^3/\Z^3:\ t\in \R}
\end{equation}
Let $R = \bigcup_{g\in \Delta\setminus\set{g_0}} R_g$ and $V = \bigcup_{g\in \Delta\setminus\set{g_0}} V_g$.
Let $N$ be a positive integer such that $\ker\chi_{Ng_0}$ contains $R$.
We further consider two subcases.

\begin{enumerate}
	\item[\qquad] \emph{Subcase 2.1:} Assume that there is $g_1$ distinct from $g_0$ in $\Delta$ such that
		\begin{equation}
			\label{equation:subcase two point one}
			(Z\cap \ker\chi_{g_1})\setminus \ker\chi_{Ng_0}
		\end{equation}
		is infinite.
		Then, in particular, $Z\cap \ker \chi_{g_1}$ is also infinite.
		Thus, by Equation \ref{equation:an equation in case two point one} there is $\rho\in R$ and $v\in V$ such that infinitely many elements of
		\begin{equation}
			\label{equation:scary equation}
			\set{\rho+vt\in \R^3/\Z^3:\ t\in \R}\setminus \ker\chi_{Ng_0}
		\end{equation}
		are in $Z$, and none of its elements are in $\ker\chi_{Ng_0}$.
		Now by Lemma \ref{lemma:main technical precursor to main} we have that whenever $\pi$ is a plane parallel to $\pi_1:=\set{w\in \R^3:\ \ab{v, w} = 0}$, we have $|\pi\cap F|$ is divisible by $p$.
		Also, by the property of $g_0$ 
		we already know that whenever $\pi$ is a plane parallel to $g_0$ we have $|\pi\cap F|$ is divisible by $p$.

		If $v$ is orthogonal to $g_0$ then, by the choice of $N$, the set in Equation \ref{equation:scary equation} is empty. 
		Thus $v$ is not orthogonal to $g_0$.
		By the fact that $v$ is not orthogonal to $g_0$, we have that a plane parallel to $g_0$ cannot be parallel to $\pi_1$.
		Then, again, by Lemma \ref{lemma:first combinatorial lemma} we deduce that $F$ is a prism with prime foundation and hence by Lemma \ref{lemma:tiling by a prism with prime base} we see that there is a $3$-periodic $F$-tiling, which is in particular $1$-weakly periodic.\vspace{0.5em}

	\item[\qquad] \emph{Subcase 2.2:}
		Now assume that $(Z\cap \ker\chi_{h})\setminus \ker\chi_{Ng_0}$ is finite whenever $h\in \Delta\setminus \set{g_0}$.
		For each $h\in \Delta\setminus\set{g_0}$, let $S_h=(Z\cap \ker\chi_h)\setminus \ker\chi_{Ng_0}$.
		Then, by our assumption, each $S_h$ is finite.
		Since $\supp(\nu)$ is contained in $Z\cap \bigcup_{g\in \Delta} \ker\chi_{g}$, we see that, in particular,
		\begin{equation}
			\text{supp}(\nu)
			\subseteq
			\ker\chi_{Ng_0} \cup \lrp{\bigcup_{h\in \Delta\setminus \set{g_0}} Z\cap \ker\chi_h}
			=
			\ker\chi_{Ng_0} \cup\lrp{ \bigcup_{h\in \Delta\setminus \set{g_0}} S_h}
		\end{equation}
		Therefore
		\begin{equation}
			\ker(\chi_{Ng_0}) \cup \bigcup_{h\in \Delta:\ h\neq g_0} S_h
		\end{equation}
		is a $\nu$-full measure set.

		Replacing $Ng_0$ by $g_0$ for simplicity of notation, we infer that there is a nonzero vector $g_0$ in $\Z^3$ and a finite set $S\subseteq \T^3$ disjoint with $\ker\chi_{g_0}$ such that $\nu$ is supported on $\ker(\chi_{g_0})\cup S$.
		We may assume that $S$ has smallest size with this property and hence each element of $S$ has positive mass under $\nu$.
		The minimality of the size of $S$ also implies that $\chi_{g_0}(s)$ is irrational for all $s\in S$, for otherwise we could replace $g_0$ by a scale of itself and reduce the size of $S$.

		We will show that $S$ is empty.
		Assume on the contrary that $S$ is non-empty.
		Define an equivalence relation $\sim$ on $S$ by writing $p\sim q$ for $p, q\in S$ if $\chi_{g_0}(p)=\chi_{g_0}(q)$.
		Let $\mc E$ be the set of all the equivalence classes.
		Thus
		\begin{equation}
			1 = 1_{\ker(\chi_{g_0})} + \sum_{E\in \mc E} 1_E
		\end{equation}
		in $L^2(\T^3, \nu)$.
		Choose a representative $p_E\in E$ for each $E\in \mc E$.
		Now acting both sides of the above equation by $ng_0$, where $n$ is any non-negative integer, we get
		\begin{equation}
			\chi_{n g_0} = 1_{\ker(\chi_{g_0})} + \sum_{E\in \mc E}\chi_{g_0}(p_E)^n 1_E
		\end{equation}
		Going back into the $L^2(X, \mu)$ world by applying $\theta^{-1}$, we get that
		\begin{equation}
			\label{equation:hook super equation}
			(n g_0)\cdot f = f^{g_0} + \sum_{E\in \mc E} \chi_{g_0}(p_E)^n \vp_E
		\end{equation}
		where $\vp_E = \norm{f}_2 \theta^{-1}(1_E)$, and $f^{g_0}$ is the orthogonal projection of $f$ onto the space of $g_0$-invariant functions in $L^2(X, \mu)$.\footnote{By the Birkhoff ergodic theorem we see that $f^{g_0}$ lies in $H_f$.
			Also, $1_{\ker\chi_{g_0}}$ is the orthogonal projection of $1$ onto the space of $g_0$-invariant functions in $L^2(\T^3, \nu)$. The fact that $\theta$ is a unitary isomorphism shows that $\theta^{-1}(1_{\ker\chi_{g_0}})$ is the same as $f^{g_0}/\norm{f}_2$.}
		Therefore, for each $n\geq 0$, we have
		\begin{equation}
			(ng_0)\cdot f - f
			=
			\lrp{f^{g_0} + \sum_{E\in \mc E} \chi_{g_0}(p_E)^n \vp_E} - \lrp{f^{g_0} - \sum_{E\in \mc E} \vp_E}
			=
			\sum_{E\in \mc E} (\chi_{g_0}(p_E)^n - 1)\vp_E
		\end{equation}
		Let $Y\subseteq X$ be a $\mu$-full measure subset of $X$ such that
		\begin{equation}
			[(ng_0)\cdot f](y) - f(y) = \sum_{E\in \mc E}(\chi_{g_0}(p_E)^n - 1) \vp_E(y)
		\end{equation}
		for all $y\in Y$ and all $n\geq 0$.
		Therefore
		\begin{equation}
			\sum_{E\in \mc E} (\chi_{g_0}(p_E)^n - 1) \vp_E(y) \in \Z
		\end{equation}
		for all $y\in Y$, and all $n\geq 0$.
		But since each $\chi_{g_0}(p_E)$ is irrational, we may apply Lemma \ref{lemma:non measure preparatory lemma} to deduce that for any $y\in Y$ the only value the above expression can take, for any non-negative integer $n$, and hence in particular for $n=1$, is $0$.
		Therefore $[g_0 \cdot f] (y) - f(y)$ is $0$ for each $y\in Y$.
		But since $Y$ is a full measure set in $X$, we infer that $g_0\cdot f = f$ in $L^2(X, \mu)$.
		Thus $f$ is $1$-periodic, and hence by Lemma \ref{label:bhattacharya correspondence principle} we deduce that the orbit closure of $T$ has a $1$-periodic point in it, which is in particular $1$-weakly periodic.

\end{enumerate}
\subsubsection{Case 3: There is no $g$ in $\Delta$ such that every line parallel to $g$ intersects $F$ in a set of size divisible by $p$} 
\label{subsubsection:last case in the proof}

In this case, by Lemma \ref{lemma:spectral support lemma used in case two} we deduce that there exist nonzero vectors $v_1, \ldots, v_n\in \Z^3$ and finite subsets $S_1, \ldots, S_n\subseteq \R^3/\Z^3$ such that each member of each $S_i$ is a rational point and the measure $\nu$ is supported on\footnote{This is because for linearly independent vectors $g$ and $h$ in $\Z^3$ we have $\ker\chi_g\cap \ker\chi_h$ is equal to $S+\set{tv\in \R^3/\Z^3:\ t\in \R}$ for some nonzero $v\in \Z^3$ orthogonal to both $g$ and $h$ and some finite set $S$ of rational points in $\R^3/\Z^3$.}
\begin{equation}
	\bigcup_{i=1}^n (S_i+ \set{t v_i\in \R^3/\Z^3:\ t\in\R})
\end{equation}
We are now done by Theorem \ref{theorem:bhattacharyas proof in three dimensions}.

%
%
%
%

%

\appendix

\section{Algebra}
\label{section:algebra appendix}

\begin{definition}
	A nonzero vector $a = (a_1, \ldots, a_n)$ in $\Z^n$ is said to be \define{primitive} if $$\gcd(a_1, \ldots, a_n) = 1$$
\end{definition}

\begin{lemma}
	\label{lemma:glnz acts transitively on the set of all the primitive vectors}
	Let $n\geq 1$.
	Then $GL_n(\Z)$ acts transitively on the set of all the primitive vectors in $\Z^n$.
\end{lemma}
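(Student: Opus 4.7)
My plan is to prove the statement by induction on $n$. Since $GL_n(\Z)$ is a group, transitivity on the set of primitive vectors follows once we show that every primitive vector lies in the $GL_n(\Z)$-orbit of the standard basis vector $e_1 = (1, 0, \ldots, 0)$; composing with inverses then yields transitivity. Thus the problem reduces to constructing, for any primitive $a \in \Z^n$, an element $M \in GL_n(\Z)$ with $Ma = e_1$.

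The base case $n = 1$ is immediate since the primitive vectors in $\Z$ are exactly $\pm 1$, both of which lie in the $GL_1(\Z) = \set{\pm 1}$-orbit of $1$. For the inductive step, I would take a primitive vector $a = (a_1, \ldots, a_n)$ and set $d = \gcd(a_2, \ldots, a_n)$. If $d = 0$, then $a_1 = \pm 1$ and we are done by the obvious sign change, so assume $d \neq 0$. The tuple $(a_2/d, \ldots, a_n/d) \in \Z^{n-1}$ is then primitive, and the inductive hypothesis supplies $N \in GL_{n-1}(\Z)$ sending it to $(1, 0, \ldots, 0)$. The block matrix $\mathrm{diag}(1, N) \in GL_n(\Z)$ then maps $a$ to $(a_1, d, 0, \ldots, 0)$.

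To finish, I would invoke Bezout's identity: since $a$ is primitive, $\gcd(a_1, d) = \gcd(a_1, a_2, \ldots, a_n) = 1$, so there exist $u, v \in \Z$ with $u a_1 + v d = 1$. The $2 \times 2$ matrix $\begin{pmatrix} u & v \\ -d & a_1 \end{pmatrix}$ has determinant $u a_1 + v d = 1$ and sends the column $(a_1, d)^T$ to $(1, 0)^T$. Embedding it as the top-left $2 \times 2$ block of the identity produces an element of $GL_n(\Z)$ carrying $(a_1, d, 0, \ldots, 0)$ to $e_1$, and composing this with the matrix from the previous step finishes the induction.

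There is no genuine obstacle in this argument; the whole proof is just a repackaging of Bezout's identity via induction. The cleanest conceptual alternative would be to invoke Smith normal form applied to the $n \times 1$ column matrix with entries $a_i$: its unique invariant factor is $\gcd(a_1, \ldots, a_n) = 1$, so the column is equivalent under $GL_n(\Z)$ row operations to $e_1$. I prefer the induction-based argument, however, because it is fully constructive and self-contained, requiring nothing beyond the Euclidean algorithm.
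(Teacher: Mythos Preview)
Your proof is correct, but it follows a different route from the paper's. The paper argues non-inductively via quotients: for a primitive $a$, it shows that $\Z^n/\Z a$ is torsion-free (using primitivity), hence free of rank $n-1$; lifting a basis of the quotient and appending $a$ yields a basis $v_1,\ldots,v_{n-1},a$ of $\Z^n$, and the linear map sending the standard basis to this basis is the required element of $GL_n(\Z)$. Your argument is instead an explicit inductive column reduction driven by B\'ezout's identity. The trade-off is the expected one: the paper's proof is shorter and immediately yields the ``extend a primitive vector to a basis'' statement, at the cost of invoking the fact that finitely generated torsion-free abelian groups are free; your proof is fully constructive and self-contained, needing only the Euclidean algorithm, and is essentially the Hermite/Smith reduction you allude to at the end.
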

\begin{proof}
	Let $a\in \Z^n$ be an arbitrary primitive vector.
	We will show that there is $T\in GL_n(\Z)$ such that $Te_n = a$, where $e_n = (0, \ldots, 0, 1)$.
	Let $\Lambda$ be the subgroup of $\Z^n$ generated by $a$.
	We claim that $\Z^n/\Lambda$ has no torsion.
	Assume on the contrary that there is torsion in $\Z^n/\Lambda$, so that there is a non-trivial element $g\in \Z^n/\Lambda$ and a positive integer $k$ such that $k\cdot g = 0$.
	This implies the existence of an element $v\in \Z^n\setminus \Lambda$ such that $kv\in \Lambda$.
	Thus $kv = ma$ for some nonzero integer $m$.
	It follows that $k$ must divide $m$, for otherwise $a$ would not be primitive.
	But then $v\in \Lambda$, contrary to the choice of $v$.

	So $\Z^n/\Lambda$ has no torsion, and thus it is isomorphic to $\Z^{n-1}$.
	Let $v_1, \ldots, v_{n-1}$ in $\Z^n$ be such that $v_1+\Lambda, \ldots, v_{n-1} + \Lambda$ forms a basis of $\Z^n/\Lambda$.
	Declare $v_n = a$ and we get a basis $v_1, \ldots, v_n$ of $\Z^n$.
	Now the $\Z$-linear map $T:\Z^n\to \Z^n$ which takes $e_i$ to $v_i$, where $e_i$ is the $i$-th standard basis vector, is an isomorphism and hence an element of $GL_n(\Z)$.
	By definition $Te_n = a$.
\end{proof}

\begin{lemma}
	\label{lemma:flattening subgroup lemma}
	Let $n$ be a positive integer and $\Lambda$ be a rank-$k$ subgroup of $\Z^n$.
	Then there is $T\in GL_n(\Z)$ such that $T(\Lambda)$ is contained in $\Z^k\times \set{(0, \ldots, 0)}$.
\end{lemma}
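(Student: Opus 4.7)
The plan is to pass to the saturation $\Lambda' := \set{v \in \Z^n : mv \in \Lambda \text{ for some nonzero } m \in \Z}$ of $\Lambda$, exhibit a $\Z$-basis $w_1, \ldots, w_n$ of $\Z^n$ whose first $k$ entries form a basis of $\Lambda'$, and then take $T \in GL_n(\Z)$ to be the map defined by $T(w_i) = e_i$. Since $\Lambda \subseteq \Lambda'$, we will immediately get $T(\Lambda) \subseteq T(\Lambda') = \Z^k \times \set{(0, \ldots, 0)}$, which is the desired conclusion.

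To construct the basis, first I would note that $\Lambda'$ contains $\Lambda$ and has the same $\Q$-span, so it has rank $k$; being a subgroup of the free abelian group $\Z^n$, it is itself free of rank $k$, hence admits a $\Z$-basis $w_1, \ldots, w_k$. Next I would verify the key structural point that $\Z^n/\Lambda'$ is torsion-free: if $mv \in \Lambda'$ for some $v \in \Z^n$ and positive integer $m$, then $m'mv \in \Lambda$ for some positive integer $m'$ by definition of $\Lambda'$, and thus $v \in \Lambda'$. A finitely generated torsion-free abelian group is free, so $\Z^n/\Lambda'$ is free of rank $n - k$, and I can lift any $\Z$-basis of it to elements $w_{k+1}, \ldots, w_n \in \Z^n$.

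Because $\Z^n/\Lambda'$ is free, the short exact sequence $0 \to \Lambda' \to \Z^n \to \Z^n/\Lambda' \to 0$ splits, so $w_1, \ldots, w_n$ is a $\Z$-basis of $\Z^n$, and defining $T$ as above completes the argument. The only place where genuine care is needed is the passage from $\Lambda$ to its saturation $\Lambda'$ so that the quotient $\Z^n/\Lambda'$ has no torsion --- this is the same device already used in the proof of Lemma \ref{lemma:glnz acts transitively on the set of all the primitive vectors} --- after which the flattening is immediate from the structure theorem for finitely generated abelian groups.
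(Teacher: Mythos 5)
Your proof is correct and is essentially the paper's own argument: the subgroup $\Gamma$ constructed in the paper as the preimage of the torsion part of $\Z^n/\Lambda$ is exactly your saturation $\Lambda'$, and both proofs then extend a basis of this subgroup to a basis of $\Z^n$ (using torsion-freeness of the quotient / the splitting of the short exact sequence) and read off $T$.
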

\begin{proof}
	Let $N\geq 1$ be an integer such that there is an isomorphism $\vp:\Z^n/\Lambda\to \Z^{n-k}\times G$, where $G$ is a finite abelian group.
	Let $\Gamma = (\vp\circ \pi)^{-1}(\set{0}\times G)$, where $\pi$ is the natural projection $\Z^n\to \Z^n/\Lambda$.
	Then $\Gamma$ contains $\Lambda$ and $\Z^n/\Gamma$ is isomorphic to $\Z^{n-k}$.
	We can now choose vectors $v_1, \ldots, v_{n-k}$ in $\Z^n$ such that $v_1 + \Gamma, \ldots, v_{n-k}+\Gamma$ is a basis of $\Z^n/\Gamma$.
	If $u_1, \ldots, u_k$ is a basis of $\Gamma$, then $u_1, \ldots, u_k, v_1, \ldots, v_{n-k}$ forms a basis of $\Z^n$.
	Define a $\Z$-linear map $T:\Z^n\to \Z^n$ by declaring $Te_i = u_i$ for $1\leq i\leq k$ and $Te_i = v_{i - k}$ for $k+1\leq i\leq n$.
	Then $T$ is a surjective $\Z$-linear map and hence is a member of $GL_n(\Z)$.
	Now $T^{-1}$ is an element of $GL_n(\Z)$ which takes $\Gamma$ to $\Z^k\times \set{(0), \ldots, 0}$, and hence puts $\Lambda$ inside $\Z^k\times \set{(0, \ldots, 0)}$, finishing the proof.
\end{proof}

\section{Measure Theory}

\begin{lemma}
	\label{lemma:weak star measurability}
	Let $(X, \mc B_X)$ be a measurable space, let $Z$ be a topological space, let $Y$ be a set, and let $\mc F$ be a family of functions from $Y$ to $Z$.
	Equip $Y$ with the $\sigma$-algebra $\mc B_Y$ generated by the sets $f^{-1}(W)$ with $f\in \mc F$ and $W\subseteq Z$ open.
	Equivalently, $\mc B_Y$ is the smallest $\sigma$-algebra on $Y$ making every $f\in \mc F$ Borel measurable.
	Then a map $\vp:X\to Y$ is measurable if and only if $f\circ \vp:X\to Z$ is measurable for every $f\in \mc F$.
\end{lemma}
\begin{proof}
	Suppose first that $\vp$ is measurable.
	Each $f\in\mc F$ is $\mc B_Y$-measurable by the very definition of $\mc B_Y$, so $f\circ\vp$ is a composition of measurable maps and is therefore measurable.

	Conversely, suppose $f\circ \vp$ is measurable for every $f\in \mc F$.
	Let
	\[
		\mc G = \set{f^{-1}(W):\ f\in \mc F,\ W\subseteq Z \text{ open}},
	\]
	so that $\mc B_Y = \sigma(\mc G)$.
	For $f^{-1}(W)\in \mc G$ we have
	\[
		\vp^{-1}\!\lrp{f^{-1}(W)} = (f\circ \vp)^{-1}(W)\in \mc B_X,
	\]
	since $f\circ \vp$ is measurable and $W$ is open.
	Now the collection $\mc A = \set{B\subseteq Y:\ \vp^{-1}(B)\in \mc B_X}$ is a $\sigma$-algebra, because taking preimages commutes with complements and with countable unions.
	We have just shown $\mc G\subseteq \mc A$, so $\mc A\supseteq \sigma(\mc G) = \mc B_Y$.
	Thus $\vp^{-1}(B)\in \mc B_X$ for every $B\in \mc B_Y$, that is, $\vp$ is measurable.
\end{proof}

The sets $f^{-1}(W)$ form a subbasis for the initial topology on $Y$ induced by $\mc F$, so $\mc B_Y$ is the $\sigma$-algebra generated by that subbasis.
When the initial topology is second countable, $\mc B_Y$ coincides with the Borel $\sigma$-algebra of $Y$; this is the case in our application, where $Y$ is the space $\mc P(\R/\Z)$ of probability measures on $\R/\Z$ with the (compact, metrizable) weak-$*$ topology and $\mc F$ is the family of integration functionals $\mu\mapsto \int \xi\, d\mu$, $\xi\in C(\R/\Z)$.

\begin{lemma}
	\label{lemma:sequences converging to zero form a measurable set}
	Let $S\subseteq \R^{\N}$ be the set of all the points $x=(x_n:\ n\in \N)$ in $\R^{\N}$ such that $x_n\to 0$.
	Then $S$ is a measurable set.
\end{lemma}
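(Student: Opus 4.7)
The plan is to express $S$ as a countable combination of cylinder sets and invoke the closure properties of the product $\sigma$-algebra on $\R^{\N}$ (which I take to be the $\sigma$-algebra generated by the coordinate projections, i.e.\ the Borel $\sigma$-algebra of the product topology). By the standard $\varepsilon$-$N$ characterization of convergence to zero, a sequence $x=(x_n)$ lies in $S$ if and only if for every $k\geq 1$ there exists $N\geq 1$ such that $|x_n|<1/k$ for all $n\geq N$. Rewriting this with the usual quantifier-to-set-operation dictionary gives
\begin{equation}
  S \;=\; \bigcap_{k=1}^{\infty}\; \bigcup_{N=1}^{\infty}\; \bigcap_{n=N}^{\infty}\; \bigl\{x\in \R^{\N} : |x_n|<1/k\bigr\}.
\end{equation}

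Next, I would observe that each inner set $\{x\in \R^{\N}:|x_n|<1/k\}$ is the preimage of the open interval $(-1/k,1/k)\subseteq \R$ under the coordinate projection $\pi_n:\R^{\N}\to \R$. Since $\pi_n$ is continuous for the product topology, this preimage is open in $\R^{\N}$, and in particular it is measurable; alternatively, it is a basic cylinder set, which belongs to the product $\sigma$-algebra by construction.

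With these building blocks in hand, the conclusion is a routine application of $\sigma$-algebra closure: the inner $\bigcap_{n\geq N}$ is a countable intersection of measurable sets, the middle $\bigcup_{N}$ is a countable union of measurable sets, and the outer $\bigcap_{k}$ is again a countable intersection. Therefore $S$ is measurable. There is really no serious obstacle here beyond writing out the identity above correctly; the only point worth double-checking is that one is using the product (Borel) $\sigma$-algebra on $\R^{\N}$, under which the coordinate projections are measurable by definition, so cylinder sets automatically lie in the $\sigma$-algebra.
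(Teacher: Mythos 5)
Your proof is correct and follows essentially the same approach as the paper: write $S$ as the countable combination $\bigcap_k \bigcup_N \bigcap_{n\geq N}$ of cylinder sets and appeal to closure of the $\sigma$-algebra under countable operations. (The paper defines $E_{q,k}$ in a way that, as written, omits the index $q$ on the right-hand side; this is evidently a typo for the condition $-1/k < x_q < 1/k$, and your formulation is the intended one.)
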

\begin{proof}
	For each $q\geq 1$ and each $k\geq 1$, define
	\begin{equation}
		E_{q, k} = \set{(x_n:\ n\in \N):\ -1/k<x_q< 1/k}
	\end{equation}
	Then each $E_{q, k}$ is measurable and it is easy to check that
	$$
	S=\bigcap_{k\geq1} \bigcup_{N\geq 1} \bigcap_{q\geq N} E_{q, k}
	$$
	whence $S$ is a measurable set.
\end{proof}
\section{Weak Periodicity Assuming the Spectral Measure is Supported on the Union of Finitely Many Lines}
\label{section:bhattachayas proof in three dimension}

The goal of this section is to prove the following.

\begin{theorem}
	\label{theorem:bhattacharyas proof in three dimensions}
	Let $F\subseteq\Z^3$ be an exact cluster and $T\subseteq\Z^3$ be an $F$-tiling.
	Let $X\subseteq \set{0, 1}^{\Z^3}$ be the orbit closure of $T$ and $\mu$ be a $\Z^3$-ergodic probability measure on $X$.
	Define
	$$
	A=\set{x\in X:\ x(0) = 1}
	$$ and $f\in L^2(X, \mu)$ as the characteristic function of $A$.
	Let $\nu$ be the spectral measure associated to the unit vector $f/\norm{f}_2$ in $L^2(X, \mu)$ and $\theta$ be the unitary isomorphism between $H_f$ --- the span closure of the orbit of $f/\norm{f}_2$ --- and $L^2(\T^3, \nu)$ as discussed in Theorem \ref{theorem:spectral theorem}.
	Assume that there exist nonzero vectors $v_1, \ldots, v_n\in \Z^3$ and finite subsets $S_1, \ldots, S_n\subseteq \T^3 = \R^3/\Z^3$ such that each member of each $S_i$ is a rational point and the measure $\nu$ is supported on
	\begin{equation}
		\bigcup_{i=1}^n (S_i+ \set{t v_i\in \T^3:\ t\in\R})
	\end{equation}
	Then $A$ is $1$-weakly periodic, and hence there is a $1$-weakly periodic $F$-tiling.
\end{theorem}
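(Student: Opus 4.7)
The plan is to extend Bhattacharya's two-dimensional argument from \cite{bhattacharya_tilings} to the present three-dimensional setting, exploiting the structural hypothesis that $\nu$ is supported on a finite union of affine one-dimensional subtori. I would first disjointify the support of $\nu$ into Borel pieces $E_1,\dots,E_m$ so that each $E_k$ is contained in a single affine line $\rho_k + \R v_{i(k)}\subseteq \T^3$ with $\rho_k\in S_{i(k)}$ rational and $v_{i(k)}\in\Z^3$ nonzero. Pulling back via $\theta$ yields a decomposition
\begin{equation}
f=\sum_{k=1}^m f_k, \qquad f_k:=\norm{f}_2\,\theta^{-1}(\mathbf{1}_{E_k}),
\end{equation}
in $L^2(X,\mu)$. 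For each $k$, since $v_{i(k)}^\perp\cap\Z^3$ is a rank-$2$ sublattice and $\rho_k$ is rational, a finite-index sublattice $\Lambda_k\subseteq v_{i(k)}^\perp\cap\Z^3$ satisfies $\chi_g(\rho_k)=1$ for every $g\in\Lambda_k$; such $g$ satisfy $\chi_g\equiv 1$ on $E_k$, so $g\cdot f_k=f_k$ in $L^2$, and each $f_k$ is $2$-periodic in the $L^2$-sense.

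If $\{v_1,\dots,v_n\}$ fails to span $\R^3$, then $\bigcap_i v_i^\perp$ contains a nonzero integer vector; passing to an appropriate integer multiple produces a nonzero $g_0\in\Z^3$ with $\chi_{g_0}\equiv 1$ on every $E_k$. Then $g_0\cdot f=f$ in $L^2(X,\mu)$, so $f$ is itself $1$-periodic, and the correspondence principle (Lemma \ref{label:bhattacharya correspondence principle}) yields a $1$-periodic---and therefore $1$-weakly periodic---$F$-tiling, finishing this case.

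The substantive case is when $\{v_i\}$ spans $\R^3$, where no single nonzero integer vector annihilates every character on the entire support of $\nu$. Here I would pass to the ergodic decomposition of $\mu$ under a finite-index subgroup $\Lambda\leq\Z^3$ chosen so that $\chi_g(\rho_k)=1$ for every $g\in\Lambda$ and every rational base point $\rho_k$. On each $\Lambda$-ergodic component the restriction of $\chi_g$ to $E_k$ reduces to the pure exponential $t\mapsto e^{2\pi i t\ab{g,v_{i(k)}}}$, and the polynomial-on-$\R/\Z$ dichotomy underlying \cite{bhattacharya_tilings}---either periodicity or equidistribution---combined with Lemma \ref{lemma:non measure preparatory lemma} and the $\{0,1\}$-valued nature of $f=1_A$, should force the restriction of $A$ to each ergodic component to split as a finite disjoint union of $1$-periodic measurable subsets. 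Reassembling across the finitely many ergodic components of $\Lambda$ gives a partition $A=A_1\sqcup\cdots\sqcup A_\ell$ with each $A_j$ a $1$-periodic measurable subset of $X$, after which Lemma \ref{label:bhattacharya correspondence principle} delivers the desired $1$-weakly periodic $F$-tiling.

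The main obstacle lies in the equidistribution sub-case of the spanning scenario: the spectral decomposition gives only that each $f_k$ is $L^2$-invariant under $\Lambda_k$, and the $f_k$ are in general neither $\{0,1\}$-valued nor supported on mutually disjoint subsets of $X$. Promoting the $L^2$ decomposition $f=\sum f_k$ to a genuine set-theoretic partition of $A$ into $1$-periodic pieces is precisely the step where Bhattacharya's original argument is most delicate, and it is where the $3$-dimensional adaptation requires the most care; the rest of the proof is largely a matter of carrying the two-dimensional bookkeeping through to $\Z^3$ \emph{mutatis mutandis}.
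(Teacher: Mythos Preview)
Your overall plan---adapt Bhattacharya's two-dimensional argument to the three-dimensional setting---is exactly what the paper does, and your non-spanning case is fine. But the sketch of the spanning case has two genuine gaps relative to what actually makes the argument work.

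First, your choice of spectral decomposition is the wrong one. By disjointifying into pieces $E_k$ lying on single affine lines you obtain $f_k=\|f\|_2\,\theta^{-1}(1_{E_k})$ that are merely $L^2$ functions with no a priori pointwise control; they need not be real, let alone $[0,1]$-valued. The paper instead works with the conditional expectations $f^{\Lambda_i}$ onto the rank-$2$ invariant subspaces $\Lambda_i=\Z g_i+\Z h_i$ (equivalently, $\theta^{-1}(1_{\ker\chi_{g_i}\cap\ker\chi_{h_i}})$ without disjointifying). These \emph{are} genuine $[0,1]$-valued functions, because they are ergodic averages of an indicator. This is not a cosmetic difference: the $[0,1]$-valuedness is precisely what converts the Weyl dichotomy (periodic vs.\ equidistributed) for the $\R/\Z$-valued map $\overline{f^{\Lambda_i}}$ into the sharp alternative ``$A\cap E$ is $2$-periodic or $\mu_E(A\cap E)=1/2$'' on each ergodic component $E$ of a suitable finite-index subgroup. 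Your invocation of Lemma~\ref{lemma:non measure preparatory lemma} here is misplaced; that lemma plays no role in this part of the argument.

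Second, and more importantly, you never say how the equidistribution (half-measure) case is resolved, and the mechanism you gesture at is not the one that works. The paper does \emph{not} promote the $L^2$ decomposition to a set-theoretic one by analytic means. Instead it uses the combinatorial fact you have not mentioned at all: the partition $X=\bigsqcup_{g\in F}gA$ coming from the tiling. On a half-measure component $E$ one finds some $b_1\in F\setminus\{0\}$ with $(b_1A)\cap E$ also of half measure, hence $E=(A\cap E)\sqcup((b_1A)\cap E)$; iterating and pigeonholing over the finitely many ergodic components produces a nonzero $b=b_{2m+1}+\cdots+b_{2n}$ with $b\cdot(A\cap E)=A\cap E$, i.e.\ genuine $1$-periodicity. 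Without this tiling-partition step there is no route from ``half measure'' to ``$1$-periodic,'' and your outline stops exactly where the real content begins.
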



The proof is an adaptation of Bhattacharya's proof of the periodic tiling conjecture (for $\Z^2$) in \cite{bhattacharya_tilings}.
No fundamentally new ideas are needed.
However, since Theorem \ref{theorem:bhattacharyas proof in three dimensions} is not a \emph{direct} corollary of the work done in \cite{bhattacharya_tilings}, we give full details.
Wherever possible, we give reference to corresponding results in \cite{bhattacharya_tilings}.
Throughout this section we will use the notation in Theorem \ref{theorem:bhattacharyas proof in three dimensions}.
We now begin the proof.

We may assume that $n$ is the smallest integer for which one can find vectors $v_1, \ldots, v_n\in \Z^3$ and finite subsets $S_1, \ldots, S_n\subseteq \T^3$ such that the support of $\nu$ is contained in
\begin{equation}
	\bigcup_{i=1}^n (S_i + \set{tv_i\in \T^3:\ t\in \R})
\end{equation}
Then $v_1, \ldots, v_n$ are pairwise linearly independent. 

\begin{lemma}
	\label{lemma:an elementary kernel containment lemma used in last section}
	Let $S\subseteq\T^3 = \R^3/\Z^3$ be a finite set of rational points and $v$ be an arbitrary nonzero vector in $\Z^3$.
	Then there exist linearly independent vectors $g$ and $h\in \Z^3$ such that both $g$ and $h$ are orthogonal to $v$ and
	\begin{equation}
		S + \set{tv\in \R^3/\Z^3:\ t\in \R}
	\end{equation}
	is contained in $\ker\chi_g\cap \ker\chi_h$.
\end{lemma}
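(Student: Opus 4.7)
The plan is to work with the rank-$2$ sublattice $L := \Z^3 \cap v^\perp$ of integer vectors orthogonal to $v$ and to find within it a finite-index subgroup whose characters vanish on $S$. The first observation is that $L$ has rank $2$: the real orthogonal complement of the nonzero vector $v$ is a rational $2$-plane in $\R^3$, and its intersection with $\Z^3$ is therefore a full rank-$2$ lattice. The second observation, which fully uses the orthogonality hypothesis, is that for any $g \in L$ and any $t \in \R$ we have $\chi_g(tv \bmod \Z^3) = e^{2\pi i t \langle g, v\rangle} = 1$. Consequently, for any $s \in S$ and $t \in \R$,
\[
\chi_g(s + tv) = \chi_g(s),
\]
so $S + \{tv : t \in \R\} \subseteq \ker\chi_g$ if and only if $S \subseteq \ker\chi_g$.

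With this reduction in hand, the task becomes: find two linearly independent vectors $g, h \in L$ such that $\chi_g(s) = \chi_h(s) = 1$ for every $s \in S$. I would define
\[
L' := \{\,g \in L : \chi_g(s) = 1 \text{ for every } s \in S\,\},
\]
which is manifestly a subgroup of $L$. Since each $s \in S$ is a rational point of $\T^3$, there is a positive integer $N$ with $Ns = 0$ in $\T^3$ for every $s \in S$; equivalently, $\langle Ng, s'\rangle \in \Z$ for every integer representative $s'$ of any $s \in S$ and every $g \in \Z^3$. Hence $NL \subseteq L'$, so $L'$ has finite index in $L$ and therefore has rank $2$ as well.

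Picking any two linearly independent elements $g, h \in L'$ then gives vectors that are orthogonal to $v$, linearly independent in $\Z^3$, and satisfy $S \subseteq \ker\chi_g \cap \ker\chi_h$; combining with the first paragraph, $S + \{tv : t \in \R\} \subseteq \ker\chi_g \cap \ker\chi_h$, completing the proof. I do not foresee any real obstacle here: the only mildly delicate point is recognizing that orthogonality to $v$ makes the line $\{tv : t \in \R\}$ automatically annihilated by $\chi_g$, so all the work reduces to the finite set $S$, for which a common integer denominator is enough.
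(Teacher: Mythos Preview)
Your proof is correct and follows essentially the same approach as the paper: both exploit that the lattice of integer vectors orthogonal to $v$ has rank $2$, and then scale by a common denominator $N$ of the rational points in $S$ to ensure the resulting characters vanish on $S$. The only cosmetic difference is that you package the construction via the subgroup $L'$ and then pick any basis, whereas the paper directly takes $g = Ng_0$, $h = Nh_0$ for a fixed basis $g_0, h_0$ of $L$.
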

\begin{proof}
	The set of all vectors in $\Z^3$ which are orthogonal to $v$ forms a rank-$2$ subgroup of $\Z^3$.
	Thus we can find two linearly independent vectors $g_0$ and $h_0$ in $\Z^3$ which are both orthogonal to $v$.
	Thus
	\begin{equation}
		\set{tv\in \R^3/\Z^3:\ t\in\R} \subseteq \ker\chi_{g_0}\cap \ker\chi_{h_0}
	\end{equation}
	If $N$ is a positive integer such that $Ns = 0$ in $\R^3/\Z^3$ for each $s\in S$, then we see that
	\begin{equation}
		S + \set{tv\in \R^3/\Z^3:\ t\in\R} \subseteq \ker\chi_{Ng_0}\cap \ker\chi_{Nh_0}
	\end{equation}
	Thus $g = Ng_0$ and $h=Nh_0$ satisfy the requirement of the lemma.
\end{proof}

\begin{lemma}
	\label{lemma:choosing planes intersection and all that}
	We can choose vectors $g_1, h_1, \ldots, g_n, h_n\in \Z^3$ such that
	\begin{enumerate}[a)]
		\item $S_i+\set{tv_i\in \R^3/\Z^3:\ t\in \R} \subseteq \ker\chi_{g_i}\cap \ker\chi_{h_i}$ for each $i$.
		\item $\Z g_i+ \Z h_i$ is a rank-$2$ subgroup of $\Z^3$ for each $i$.
		\item $g_i$ and $h_i$ are orthogonal to $v_i$ for each $i$.
		\item $(\Z g_i + \Z h_i) + (\Z g_j + \Z h_j)$ is a rank-$3$ subgroup of $\Z^3$ whenever $i\neq j$.
	\end{enumerate}
\end{lemma}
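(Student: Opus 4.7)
The plan is to apply Lemma \ref{lemma:an elementary kernel containment lemma used in last section} to each pair $(S_i, v_i)$ independently. For each $i$, that lemma supplies linearly independent vectors $g_i, h_i \in \Z^3$, both orthogonal to $v_i$, such that
$$
  S_i + \set{t v_i \in \R^3/\Z^3:\ t\in \R} \subseteq \ker\chi_{g_i}\cap \ker\chi_{h_i}.
$$
This takes care of conditions (a), (b), and (c) immediately, with no coordination required between the different choices of $(g_i, h_i)$.

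The only condition requiring a genuine argument is (d), and I expect it to fall out for free from the pairwise linear independence of the $v_i$ (which was recorded just above the lemma statement as a consequence of the minimality of $n$). The key observation is that the rank of a subgroup of $\Z^3$ equals the $\Q$-dimension of its $\Q$-span. Since $g_i$ and $h_i$ are $\Q$-linearly independent and both lie in the two-dimensional rational hyperplane $v_i^\perp$, they must $\Q$-span all of $v_i^\perp$. Consequently the $\Q$-span of $\set{g_i, h_i, g_j, h_j}$ coincides with $v_i^\perp + v_j^\perp \subseteq \R^3$. For $i\neq j$, the vectors $v_i$ and $v_j$ are linearly independent, so the two hyperplanes $v_i^\perp$ and $v_j^\perp$ are distinct, and two distinct $2$-dimensional subspaces of $\R^3$ always sum to all of $\R^3$. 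Hence this $\Q$-span is $3$-dimensional, and $(\Z g_i + \Z h_i) + (\Z g_j + \Z h_j)$ has rank $3$ in $\Z^3$, as required.

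There is no substantive obstacle in this lemma; it is essentially a bookkeeping statement that packages the previous lemma together with the minimality of $n$ into the precise form needed downstream. The real work will come later, when the vectors $g_i, h_i$ are used to decompose $L^2(\T^3, \nu)$ into pieces supported on thin subtori and to construct periodic structure for $f$ on each piece.
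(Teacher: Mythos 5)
Your proof is correct and follows the same route as the paper: conditions (a)–(c) come directly from Lemma \ref{lemma:an elementary kernel containment lemma used in last section}, and (d) follows from the pairwise linear independence of the $v_i$, because $g_i, h_i$ span $v_i^\perp$ and two distinct $2$-dimensional subspaces of a $3$-dimensional space span everything. The paper phrases this last step as a proof by contradiction (rank $\le 2$ forces $v_i \parallel v_j$), whereas you argue directly via dimension counting, but the content is identical.
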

\begin{proof}
	By Lemma \ref{lemma:an elementary kernel containment lemma used in last section} we can find $g_1, h_1, \ldots, g_n, h_n$ such that (a), (b) and (c) are true.
	We show that (d) also holds.
	Let $i, j\in \set{1, \ldots, n}$ be distinct.
	Assume on the contrary that
	\begin{equation}
		\Lambda := (\Z g_i + \Z h_i) + (\Z g_j + \Z h_j)
	\end{equation}
	is not a rank-$3$ subgroup of $\Z^3$.
	Then $\Lambda$ is a rank-$2$ subgroup of $\Z^3$.
	Consequently, both $v_i$ and $v_j$ are orthogonal to each of $g_i, h_i, g_j, h_j$.
	This forces that $v_i$ and $v_j$ are linearly dependent, contradicting the pairwise linear independence of $v_1, \ldots, v_n$ noted above (a consequence of the minimality of $n$).
\end{proof}

Fix $g_1, h_1, \ldots, g_n, h_n$ as in Lemma \ref{lemma:choosing planes intersection and all that}.
Thus, in particular, the measure $\nu$ is supported on the set
\begin{equation}
	\bigcup_{i=1}^n \ker\chi_{g_i} \cap \ker\chi_{h_i}
\end{equation}
Let $\Lambda_i$ be the subgroup of $\Z^3$ generated by $g_i$ and $h_i$, that is $\Lambda_i=\Z g_i + \Z h_i$.

\subsection{Very Weak Periodic Decomposition}

Let $H_f = \overline{\text{Span}}\set{v\cdot f:\ v\in \Z^3}$.
Let $\nu$ be the spectral measure associated to $f/\norm{f}_2$ and $\theta:H_f\to L^2(\T^3, \nu)$ be the unitary isomorphism discussed in Section \ref{section:spectral theorem}.

For any subgroup $\Lambda$ of $\Z^3$ let $f^\Lambda$ denote the projection of $f$ onto the subspace of all the $\Lambda$-invariant functions in $L^2(X, \mu)$.
Writing $B_N$ to denote the ball of radius $N$ in $\Z^3$, by the ergodic theorem we have
\begin{equation}
	f^\Lambda
	=
	\lim_{N\to \infty} \frac{1}{|\Lambda\cap B_N|}\sum_{v\in \Lambda\cap B_N} v\cdot f
\end{equation}
in $L^2(X, \mu)$. 
Therefore $f^\Lambda$ lies in $H_f = \overline{\text{Span}}\set{v\cdot f:\ v\in \Z^3}$, and it is hence the same as the image of $f$ under the  orthogonal projection $H_f\to H_f$ onto the set of all the $\Lambda$-invariant functions in $H_f$.
This implies that $\theta(f^\Lambda/\norm{f}_2)= \one^\Lambda$, where $\one^\Lambda$ is the orthogonal projection of $\one$ onto the space of all the $\Lambda$-invariant functions in $L^2(\T^3, \nu)$. 

\begin{lemma}
	\label{lemma:projection of the constant function one}
	Let $\rho$ be a probability measure on $\T^3$.
	Let $u$ and $w$ be two linearly independent vectors in $\Z^3$ and $\Lambda$ be the subgroup of $\Z^3$ generated by $u$ and $w$.
	Under the natural unitary action of $\Z^3$ on $L^2(\T^3, \rho)$, we have
	\begin{equation}
		\one^\Lambda
		=
		1_{\ker\chi_u\cap \ker\chi_w}
	\end{equation}
	where $\one^\Lambda$ denotes the orthogonal projection of $\one$ onto the space of all the $\Lambda$-invariant functions in $L^2(\T^3, \rho)$.
\end{lemma}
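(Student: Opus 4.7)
The plan is to identify the closed subspace of $\Lambda$-invariant elements of $L^2(\mathbb T^3,\rho)$ explicitly as the functions (a.e.) supported on $K:=\ker\chi_g\cap\ker\chi_h$, and then observe that the orthogonal projection of $\mathbf 1$ onto this subspace is simply $1_K$.

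First I would unpack the invariance condition. Under the natural representation, $v\cdot\phi=\chi_v\phi$ for $v\in\Z^3$, so $\phi$ is $\Lambda$-invariant precisely when $\chi_v\phi=\phi$ in $L^2(\mathbb T^3,\rho)$ for every $v\in\Lambda$. Since $\Lambda=\Z g+\Z h$ and $\chi_{ag+bh}=\chi_g^a\chi_h^b$, this is equivalent to the two identities $\chi_g\phi=\phi$ and $\chi_h\phi=\phi$ holding $\rho$-a.e.

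Next I would show that these two identities are equivalent to $\phi=0$ $\rho$-a.e.\ on $\mathbb T^3\setminus K$. For the forward direction, $(\chi_g-1)\phi=0$ $\rho$-a.e.\ forces $\phi$ to vanish $\rho$-a.e.\ on $\{z:\chi_g(z)\neq 1\}$, and similarly for $h$; together these cover $\mathbb T^3\setminus K$. Conversely, if $\phi$ vanishes $\rho$-a.e.\ outside $K$, then on $K$ we have $\chi_v(z)=1$ for every $v\in\Lambda$, so $\chi_v\phi=\phi$ $\rho$-a.e. Hence the $\Lambda$-invariant subspace $\mc H^\Lambda\subseteq L^2(\mathbb T^3,\rho)$ equals $1_K\cdot L^2(\mathbb T^3,\rho)$, i.e.\ the functions supported on $K$.

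Finally, the orthogonal projection onto $\mc H^\Lambda$ is multiplication by $1_K$: for any $\psi\in L^2(\mathbb T^3,\rho)$ the function $1_K\psi$ lies in $\mc H^\Lambda$, and the remainder $\psi-1_K\psi=1_{\mathbb T^3\setminus K}\psi$ vanishes on $K$, so it is $\rho$-orthogonal to every element of $\mc H^\Lambda$. Applying this to $\psi=\mathbf 1$ gives $\mathbf 1^\Lambda=1_K=1_{\ker\chi_g\cap\ker\chi_h}$, which is the desired identity. There is no genuine obstacle here; the only point requiring care is to pass from the $L^2$-equality $\chi_g\phi=\phi$ to the pointwise a.e.\ vanishing of $\phi$ on the open set $\{\chi_g\neq 1\}$, which is handled by choosing measurable representatives.
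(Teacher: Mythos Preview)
Your proof is correct and follows essentially the same approach as the paper: decompose $\mathbf 1 = 1_K + (1-1_K)$ with $K=\ker\chi_g\cap\ker\chi_h$, note that $1_K$ is $\Lambda$-invariant, and verify that $1-1_K$ lies in the orthogonal complement of the invariant subspace. You are in fact slightly more thorough than the paper, which only records that $1-1_K\perp 1_K$ and leaves implicit the identification of the full $\Lambda$-invariant subspace with $1_K\cdot L^2(\mathbb T^3,\rho)$ that you spell out explicitly.
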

\begin{proof}
	Write $K = \ker\chi_u\cap \ker\chi_w$.
	Recall that the generators act on $L^2(\T^3, \rho)$ by multiplication by their characters: $u$ acts as the operator $\sigma_u\colon\phi\mapsto\chi_u\phi$, and $w$ as $\sigma_w\colon\phi\mapsto\chi_w\phi$.
	Since $\Lambda$ is generated by $u$ and $w$, a function $\psi\in L^2(\T^3, \rho)$ is $\Lambda$-invariant if and only if $\sigma_u\psi = \psi$ and $\sigma_w\psi = \psi$, that is, the pointwise products satisfy $\chi_u\psi = \psi$ and $\chi_w\psi = \psi$ $\rho$-a.e.
	As $|\chi_u(x)| = 1$ for every $x$, the equation $\chi_u(x)\psi(x) = \psi(x)$ forces $\psi(x) = 0$ wherever $\chi_u(x)\neq 1$.
	Thus $\psi$ vanishes $\rho$-a.e. away from $\ker\chi_u$, and likewise away from $\ker\chi_w$, hence off $K$.
	
	Thus the $\Lambda$-invariant functions form exactly the closed subspace $V$ of functions supported on $K$.
	For any $\phi\in L^2(\T^3, \rho)$ the decomposition $\phi = 1_K\phi + (1-1_K)\phi$ splits $\phi$ into a function supported on $K$ and one vanishing on $K$.
	The latter is orthogonal to every member of $V$, so the orthogonal projection onto $V$ is multiplication by $1_K$.
	Applying this to $\one$ gives $\one^\Lambda = 1_K\cdot\one = 1_K = 1_{\ker\chi_u\cap \ker\chi_w}$.
\end{proof}

\begin{lemma}
	\label{lemma:very weakly periodic decomposition theorem}
	\emph{(See \cite[Theorem 3.3]{bhattacharya_tilings})}
	The function
	$$
	f_0 := f-\sum_{i=1}^nf^{\Lambda_i}
	$$
	is $3$-periodic, where recall that $\Lambda_i = \Z g_i + \Z h_i$.
\end{lemma}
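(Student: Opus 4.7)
The plan is to transfer the problem to the spectral side via the unitary isomorphism $\theta$ and show that $\theta(f_0)$ is supported on a finite set of rational points of $\T^3$; by the intertwining property of $\theta$, this forces $f_0$ to be invariant under a finite-index subgroup of $\Z^3$, i.e., $3$-periodic.

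Set $A_i := \ker\chi_{g_i}\cap \ker\chi_{h_i}$. Since $f^{\Lambda_i}$ is the orthogonal projection of $f$ onto the subspace of $\Lambda_i$-invariants in $L^2(X,\mu)$ and lies in $H_f$ (by the averaging expression noted just before the statement), Lemma \ref{lemma:projection of the constant function one} applied to the measure $\nu$ gives
$$\theta(f^{\Lambda_i}/\norm{f}_2) = 1_{A_i}$$
in $L^2(\T^3,\nu)$. Consequently
$$\theta(f_0/\norm{f}_2) = \one - \sum_{i=1}^n 1_{A_i}.$$
By property (a) of Lemma \ref{lemma:choosing planes intersection and all that}, the measure $\nu$ is supported on $\bigcup_i A_i$, so $\one = 1_{\bigcup_i A_i}$ $\nu$-almost everywhere. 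Inclusion-exclusion then rewrites the right-hand side as a signed sum of indicator functions of the form $1_{A_{i_1}\cap\cdots\cap A_{i_k}}$ with $k\geq 2$.

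The key point is that every such intersection is a finite set of rational points. Indeed, for $i\neq j$ we have $A_i\cap A_j = \bigcap_{v\in \Lambda_i+\Lambda_j}\ker\chi_v$, which is exactly the annihilator in $\T^3$ of the subgroup $\Lambda_i+\Lambda_j$. By property (d) of Lemma \ref{lemma:choosing planes intersection and all that}, $\Lambda_i+\Lambda_j$ has rank $3$ and therefore finite index in $\Z^3$, so its annihilator in $\T^3$ is dual to the finite group $\Z^3/(\Lambda_i+\Lambda_j)$ and consists of finitely many torsion points. Hence $\theta(f_0/\norm{f}_2)$ is supported on a finite set $F\subseteq \T^3$ of rational points.

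To finish, I would choose a finite-index subgroup $\Lambda\subseteq \Z^3$ such that $\chi_v(p)=1$ for every $p\in F$ and every $v\in \Lambda$, which exists because $F$ is a finite set of torsion points. Then in $L^2(\T^3,\nu)$ we have $\sigma_v\theta(f_0/\norm{f}_2) = \chi_v\cdot \theta(f_0/\norm{f}_2) = \theta(f_0/\norm{f}_2)$ for every $v\in \Lambda$, and applying $\theta^{-1}$ together with the intertwining diagram in Theorem \ref{theorem:spectral theorem} yields $v\cdot f_0 = f_0$ in $L^2(X,\mu)$ for all $v\in \Lambda$, which is exactly $3$-periodicity of $f_0$. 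The only substantive step is the collapse of the subdominant inclusion-exclusion terms to a finite set of rational points; this is engineered by property (d) in the choice of the $g_i, h_i$, and once it is invoked the rest is essentially a formal spectral calculation.
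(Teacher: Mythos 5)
Your proof is correct and follows essentially the same route as the paper: both transfer to $L^2(\T^3,\nu)$ via $\theta$, identify $\theta(f^{\Lambda_i}/\norm{f}_2)$ with $1_{\ker\chi_{g_i}\cap\ker\chi_{h_i}}$, use that $\nu$ is supported on $\bigcup_i A_i$, and then observe that the resulting function is supported on the pairwise intersections $A_i\cap A_j$, each of which is the (finite) annihilator of the finite-index subgroup $\Lambda_i+\Lambda_j$. The only difference is that you spell out the inclusion-exclusion step that the paper dismisses as ``easy to check'' (and you should avoid reusing the letter $F$ for the finite support set in $\T^3$, since it collides with the cluster $F\subseteq\Z^3$).
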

\begin{proof}
	The map $\theta$ is linear and, by Theorem \ref{theorem:spectral theorem}, and satisfies $\theta(f/\norm{f}_2)=\one$.
	Moreover $\theta(f^{\Lambda_i}/\norm{f}_2)=\one^{\Lambda_i}$ for each $i$, by the relation $\theta(f^\Lambda/\norm{f}_2)=\one^\Lambda$ established above applied with $\Lambda=\Lambda_i$.
	Hence
	\begin{equation}
		\frac{1}{\norm{f}_2} \theta\lrp{f - \sum_{i=1}^n f^{\Lambda_i}}
		=
		\theta\lrp{\frac{f}{\norm{f}_2}} - \sum_{i=1}^n \theta\lrp{\frac{f^{\Lambda_i}}{\norm{f}_2}}
		=
		\one - \sum_{i=1}^n\one^{\Lambda_i} .
	\end{equation}
	By Lemma \ref{lemma:projection of the constant function one} we have $\one^{\Lambda_i} = 1_{\ker\chi_{g_i}\cap \ker\chi_{h_i}}$ since $g_i$ and $h_i$ generate $\Lambda_i$.
	So we just need to show that $\one - \sum_{i=1}^n\one^{\Lambda_i}$ is $3$-periodic.
	Note that since $\nu$ is supported on $\bigcup_{i=1}^n\ker\chi_{g_i}\cap \ker\chi_{h_i}$, we have
	\begin{equation}
		\one = 1_{\bigcup_{i=1}^n \ker\chi_{g_i}\cap \ker\chi_{h_i}}
	\end{equation}
	in $L^2(\T^3, \nu)$.
	So we need to show that the function
	\begin{equation}
		D :=
		1_{\bigcup_{i=1}^n \ker\chi_{g_i}\cap \ker\chi_{h_i}}
		-
		\sum_{i=1}^n 1_{\ker\chi_{g_i}\cap \ker\chi_{h_i}}
	\end{equation}
	is $3$-periodic.
	Write $K_i = \ker\chi_{g_i}\cap\ker\chi_{h_i}$.
	At a point lying in exactly one of the $K_i$ the two terms of $D$ agree, so $D$ vanishes there.
	Hence $D$ is supported on the union of the pairwise intersections $\bigcup_{i\neq j}(K_i\cap K_j)$.
	(When $n=1$ this union is empty, so $D=0$ and the claim is trivial.
	Assume $n\geq 2$ below.)
	Now consider the finite index subgroup
	\begin{equation}
		\Gamma
		=
		\bigcap_{i\neq j}(\Lambda_i+\Lambda_j)
	\end{equation}
	of $\Z^3$, which is of finite index because each $\Lambda_i+\Lambda_j$ has rank $3$ by property (d) of Lemma \ref{lemma:choosing planes intersection and all that}.
	If $t\in K_i\cap K_j$, then 
	$$
		\chi_{g_i}(t)=\chi_{h_i}(t)=\chi_{g_j}(t)=\chi_{h_j}(t)=1,
	$$
	so $\chi_w(t)=1$ for every $w\in\Lambda_i+\Lambda_j$, and in particular for every $w\in\Gamma\subseteq\Lambda_i+\Lambda_j$.
	Therefore $\chi_w D = D$ for every $w\in\Gamma$, that is, $D$ is invariant under the rank-$3$ subgroup $\Gamma$ and so is $3$-periodic.
\end{proof}

\begin{lemma}
	\label{lemma:a special periodicity lemma}
	\emph{(See \cite[Theorem 3.3]{bhattacharya_tilings})}
	For any $l\geq 1$ and $i\in \set{1, \ldots, n}$ we have $f^{l\Lambda_i}-f^{\Lambda_i}$ is $3$-periodic.
\end{lemma}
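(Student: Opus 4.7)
The plan is to transfer the statement through $\theta$ to $L^2(\T^3,\nu)$ and argue there that the transferred function is $\nu$-almost-everywhere the indicator of a finite set of torsion points, which is then trivially $3$-periodic.

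First, exactly as in the paragraph preceding Lemma \ref{lemma:very weakly periodic decomposition theorem}, the ergodic theorem shows that $f^{l\Lambda_i}\in H_f$ and $\theta(f^{l\Lambda_i}/\norm{f}_2)=\one^{l\Lambda_i}$. Applying Lemma \ref{lemma:projection of the constant function one} with $(lg_i,lh_i)$ in place of $(g,h)$ (valid since these vectors are linearly independent and generate $l\Lambda_i$), this projection equals $1_{K_l}$, where $K_l := \ker\chi_{lg_i}\cap\ker\chi_{lh_i}$. The same lemma applied to $(g_i,h_i)$ yields $\theta(f^{\Lambda_i}/\norm{f}_2)=1_K$ with $K:=\ker\chi_{g_i}\cap\ker\chi_{h_i}$. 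Hence it suffices to prove that $1_{K_l}-1_K = 1_{K_l\setminus K}$ is $3$-periodic in $L^2(\T^3,\nu)$.

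Next, I would exploit the spectral support hypothesis. Modulo $\nu$-null sets, $1_{K_l\setminus K}$ equals $1_E$, where
\begin{equation}
E \;=\; (K_l\setminus K)\cap \bigcup_{j=1}^{n}\lrp{\ker\chi_{g_j}\cap\ker\chi_{h_j}}.
\end{equation}
The $j=i$ contribution is empty since $K\subseteq K_l$. For $j\neq i$, the four vectors $lg_i,lh_i,g_j,h_j$ generate $l\Lambda_i+\Lambda_j$, which has the same rational rank as $\Lambda_i+\Lambda_j$ (using $l\Lambda_i\otimes\Q = \Lambda_i\otimes\Q$); that rank is $3$ by condition (d) of Lemma \ref{lemma:choosing planes intersection and all that}. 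Therefore $\ker\chi_{lg_i}\cap\ker\chi_{lh_i}\cap\ker\chi_{g_j}\cap\ker\chi_{h_j}$ is a finite subgroup of $\T^3$, consisting of torsion elements. Taking the union over the finitely many $j\neq i$, we conclude that $E$ is a finite set of torsion points.

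Finally, to conclude $3$-periodicity, each torsion point $p\in E$ has annihilator $\ann(p):=\lrset{g\in\Z^3: \chi_g(p)=1}$ of finite index in $\Z^3$, so $\Gamma:=\bigcap_{p\in E}\ann(p)$ is a rank-$3$ subgroup of $\Z^3$. For any $g\in\Gamma$, the action on $1_E\in L^2(\T^3,\nu)$ is multiplication by $\chi_g$, which is identically $1$ on the (finite) support of $1_E$, so $g\cdot 1_E = 1_E$. Pulling back via the $\Z^3$-equivariant isomorphism $\theta^{-1}$ yields that $f^{l\Lambda_i}-f^{\Lambda_i}$ is $\Gamma$-invariant, i.e., $3$-periodic. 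The only nontrivial point to check is the rank claim for $l\Lambda_i+\Lambda_j$; once that is in hand, the rest is essentially bookkeeping on the spectral side.
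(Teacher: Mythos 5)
Your proof is correct and follows essentially the same route as the paper: transfer via $\theta$ to reduce to showing $1_{K_l}-1_K$ is $3$-periodic in $L^2(\T^3,\nu)$, then use the support hypothesis on $\nu$ together with Lemma \ref{lemma:choosing planes intersection and all that}(d) to see that the difference is $\nu$-a.e. supported on a finite set of torsion points, whose common annihilator is a finite-index subgroup. The paper leaves this final step as ``a simple exercise''; you have supplied exactly the intended details.
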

\begin{proof}
	We have
	$$
	\theta(f^{l\Lambda_i} - f^{\Lambda_i}) = \norm{f}_2 (1_{\ker\chi_{l g_i}\cap \ker\chi_{l h_i}} - 1_{\ker\chi_{g_i}\cap \ker\chi_{h_i}})
	$$
	So it suffices to show that
	$$
	1_{\ker\chi_{l g_i}\cap \ker\chi_{l h_i}} - 1_{\ker\chi_{g_i}\cap \ker\chi_{h_i}}
	$$
	is $3$-periodic in $L^2(\T^3, \nu)$.
	This is a simple exercise keeping in mind that $\nu$ is supported on $\bigcup_{i=1}^n \ker\chi_{g_i}\cap \ker\chi_{h_i}$. 
\end{proof}

%
%
%

%
\subsection{Polynomial Sequences}
\label{subsection:polynomial sequences}

By a \define{bi-infinite sequence} in $\R/\Z$, we mean a map $s:\Z\to \R/\Z$.
We will write $s_i$ to mean $s(i)$.
In what follows we will write `sequence' to mean a bi-infinite sequence.
For a sequence $s$, we define another sequence $\partial s$ by writing $(\partial s)_i= s_{i+1}-s_i$.
The $k$-fold composition $\partial \circ \cdots \circ \partial$ will be denoted by $\partial^k$.
We say that a sequence $s$ is a \define{polynomial sequence} if $\partial^ks=0$ for some positive integer $k$.
It is easy to argue by induction that if $s$ is a polynomial sequence in $\R/\Z$, then there exists a non-negative integer $n$ and $a_0, a_1, \ldots, a_n\in \R/\Z$ such that $s_k = a_0 + a_1k + \cdots + a_nk^n$ for all $k$, where the terms $a_ik^i$ have to be interpreted as elements in $\R/\Z$ by thinking of $a_ik^i$ as the $k^i$-fold sum of $a_i$.
This justifies the usage of the phrase `polynomial sequence.'

There is a natural action of $\Z[u, u^{-1}]$, the ring of all the Laurent polynomials over $\Z$, on the set of all the sequences in $\R/\Z$.
First note that there is a natural way to add two sequences: For $s, t:\Z\to \R/\Z$, we have $s+t:\Z\to \R/\Z$ defined as $(s+t)_i = s_i+ t_i$.
Similarly, any sequence can be scaled by an integer.
Now for $n\in \Z$ and a sequence $s:\Z\to \R/\Z$, we define the sequence $u^ns:\Z\to \R/\Z$ as $(u^ns)_i= s_{i+n}$.
For an element $\sum_{n\in S} a_nu^n$ in $\Z[u, u^{-1}]$, and a sequence $s:\Z\to \R/\Z$, we define
\begin{equation}
	\lrp{\sum_{n\in S} a_nu^n}s = \sum_{n\in S} a_n (u^ns)
\end{equation}

\begin{lemma}
	\label{lemma:unipotent polynomial on cosets}
	Let $s:\Z\to \R/\Z$ be a sequence such that there is $m\geq 1$ and $k\geq 1$ with the property that $(u^m-1)^ks=0$.
	Then the restriction of $s$ to any coset of $m\Z$ in $\Z$ is a polynomial sequence.
\end{lemma}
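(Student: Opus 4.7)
The plan is to fix a residue class and observe that the operator $u^m - 1$ acts on the restricted sequence exactly like the difference operator $\partial$, so the hypothesis immediately translates into the polynomial condition.

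More precisely, fix $r \in \{0, 1, \ldots, m-1\}$ and define a new sequence $t : \Z \to \R/\Z$ by $t_n = s_{r+mn}$. The goal is to show $\partial^k t = 0$, which is exactly the definition of $t$ being a polynomial sequence. The first step would be to note that for any $i$ of the form $i = r + mn$, we have
\begin{equation}
(u^m s)_i = s_{i+m} = s_{r + m(n+1)} = t_{n+1},
\end{equation}
so that $(u^m s)|_{r + m\Z}$ corresponds (under the identification $n \leftrightarrow r + mn$) to $ut$, where now $u$ denotes the unit shift on sequences indexed by $\Z$. Consequently $((u^m - 1) s)|_{r+m\Z}$ corresponds to $(u-1) t = \partial t$.

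Iterating this observation $k$ times, one gets $((u^m - 1)^k s)|_{r + m\Z} = \partial^k t$. By hypothesis the left-hand side is $0$, so $\partial^k t = 0$, which by definition means $t$ is a polynomial sequence. Since $r$ was arbitrary, the restriction of $s$ to every coset of $m\Z$ in $\Z$ is polynomial.

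There is essentially no obstacle here; the proof is a one-line identification of operators. The only minor care needed is to verify that the $\Z[u,u^{-1}]$-action commutes with restriction to the coset in the sense described, which is immediate from the definitions. One could alternatively write it out binomially as
\begin{equation}
0 = ((u^m - 1)^k s)_{r + mn} = \sum_{j=0}^k (-1)^{k-j} \binom{k}{j} s_{r + m(n+j)} = \sum_{j=0}^k (-1)^{k-j} \binom{k}{j} t_{n+j} = (\partial^k t)_n,
\end{equation}
which reaches the same conclusion by direct computation.
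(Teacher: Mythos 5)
Your proof is correct, and it takes a genuinely more direct route than the paper. The paper proves the lemma by induction on $k$: it sets $t = (u^m-1)s$, applies the inductive hypothesis to conclude $t$ is polynomial on each coset with some explicit expression $t_{mj} = a_0 + a_1 j + \cdots + a_n j^n$, and then telescopes $s_{mN} = s_0 + \sum_{j=0}^{N-1} t_{mj}$ to argue that summing a polynomial sequence again gives a polynomial sequence. Your argument instead observes at the operator level that restriction to a coset $r + m\Z$ intertwines $u^m - 1$ with $\partial$, so the hypothesis $(u^m-1)^k s = 0$ becomes literally $\partial^k t = 0$, which is the paper's \emph{definition} of a polynomial sequence. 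Your approach buys brevity and avoids re-deriving the antidifference-of-a-polynomial-is-a-polynomial fact; it exploits the fact that the paper defined ``polynomial sequence'' via $\partial^k = 0$ rather than via an explicit coefficient formula, whereas the paper's induction is what you would write if the definition had been the closed-form expression.
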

\begin{proof}
	We induct on $k$.
	For $k=1$ the hypothesis $(u^m-1)s=0$ reads $s_{i+m}=s_i$ for all $i$, so $s$ is constant on each coset of $m\Z$, which is a polynomial sequence (of degree $0$).
	Fix $k>1$ and inductively assume that the proposition has been proved for all smaller values.
	Define $t=(u^m-1)s$.
	Now since $(u^m-1)^{k-1}t=0$, we deduce by the inductive hypothesis that $t$ restricted to any coset of $m\Z$ is a polynomial sequence.
	Let $a_0, \ldots, a_n$ be elements of $\R/\Z$ such that
	\begin{equation}
		\label{equation:polynomial expression}
		t_{mj} = a_0+a_1 j + \cdots + a_nj^n
	\end{equation}
	for all $j$.
	Now since $t = (u^m-1)s$, we have
	\begin{align}
		\begin{split}
			s_{m+j} - s_j &= t_j\\
			\ra s_{m+mj} - s_{mj} &= t_{mj} \\
			\ra s_{m+m(N-1)} - s_0 &= \sum_{j=0}^{N-1} t_{mj}\\
			\ra s_{mN} &= s_0 + \sum_{j=0}^{N-1} t_{mj}\\
		\end{split}
	\end{align}
	the third line being the telescoping sum of the second over $j=0, \ldots, N-1$ (valid for $N\geq 0$).
	More conceptually, the bi-infinite sequence $a_N := s_{mN}$ has first difference $a_{N+1}-a_N = s_{m(N+1)}-s_{mN} = t_{mN}$ for all $N\in\Z$, which by Equation \ref{equation:polynomial expression} is a polynomial in $N$; and a bi-infinite sequence in $\R/\Z$ whose first difference is a polynomial sequence is itself a polynomial sequence (apply $\partial$ once more to see $\partial^{n+2}a=0$).
	Hence $N\mapsto s_{mN}$ is a polynomial sequence, that is, $s$ restricted to $m\Z$ is a polynomial sequence.
	The same argument applied with base point $r$ in place of $0$, using that $t$ restricted to the coset $r+m\Z$ is a polynomial sequence, shows that $s$ restricted to each coset $r+m\Z$ is a polynomial sequence, completing the induction.
\end{proof}

\begin{corollary}
	\label{corollary:annihilated by a polynomial unipotent}
	Let $s$ be a sequence and $m_1, \ldots, m_l$ be positive integers such that
	\begin{equation}
		(u^{m_1}-1) \cdots (u^{m_l}-1) s=0
	\end{equation}
	Then there are positive integers $m$ and $k$ such that $(u^m-1)^k$ annihilates $s$, and thus $s$ is a polynomial sequence on each coset of $m\Z$.
\end{corollary}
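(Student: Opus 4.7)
The plan is to take $m = \operatorname{lcm}(m_1, \ldots, m_l)$ and $k = l$, and reduce the statement to the preceding lemma by a simple divisibility computation in $\Z[u, u^{-1}]$.

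First, I would observe that for each $i$, since $m_i \mid m$, the polynomial $u^{m_i} - 1$ divides $u^m - 1$ in $\Z[u, u^{-1}]$; concretely, writing $m = m_i t_i$, we have $u^m - 1 = (u^{m_i} - 1)(1 + u^{m_i} + u^{2m_i} + \cdots + u^{(t_i - 1)m_i})$, so let $q_i(u) \in \Z[u, u^{-1}]$ denote the second factor. Multiplying together the identities $u^m - 1 = (u^{m_i} - 1)q_i(u)$ across $i = 1, \ldots, l$ yields the factorization
\begin{equation}
  (u^m - 1)^l = \lrp{\prod_{i=1}^l (u^{m_i} - 1)} \cdot \prod_{i=1}^l q_i(u)
\end{equation}
in $\Z[u, u^{-1}]$.

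Next, I would invoke that the action of $\Z[u, u^{-1}]$ on bi-infinite sequences defined in Section \ref{subsection:polynomial sequences} is a genuine ring action: this is immediate from $(u^n u^r) \cdot s = u^{n+r} \cdot s$ and bilinearity, and in particular it is commutative. Therefore, applying the factorization above to $s$ and using the hypothesis $\lrp{\prod_{i=1}^l (u^{m_i} - 1)} \cdot s = 0$, we get
\begin{equation}
  (u^m - 1)^l \cdot s = \lrp{\prod_{i=1}^l q_i(u)} \cdot \lrp{\prod_{i=1}^l (u^{m_i} - 1) \cdot s} = 0.
\end{equation}
Thus $(u^m - 1)^k$ annihilates $s$ with $k = l$, which establishes the first half of the corollary.

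Finally, the last sentence follows directly by applying Lemma \ref{lemma:unipotent polynomial on cosets} to the pair $(m, k)$. There is no substantive obstacle here; the whole content is the divisibility $u^{m_i} - 1 \mid u^m - 1$ for $m_i \mid m$ combined with commutativity of the action, and the heavy lifting (going from $(u^m - 1)^k s = 0$ to polynomiality on cosets of $m\Z$) has already been done in the previous lemma.
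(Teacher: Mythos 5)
Your proof is correct and takes essentially the same approach as the paper: both reduce to Lemma \ref{lemma:unipotent polynomial on cosets} by showing $(u^{m_1}-1)\cdots(u^{m_l}-1)$ divides $(u^m-1)^k$ for suitable $m,k$. Your version is slightly more explicit in choosing $m = \lcm(m_1,\ldots,m_l)$ and $k = l$ and exhibiting the cofactors, whereas the paper argues more briefly via the observation that all roots involved are roots of unity.
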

\begin{proof}
	The roots of the polynomial $u^{m_i}-1$ are roots of unity.
	Therefore the polynomial $(u^{m_1}-1) \cdots (u^{m_l}-1)$ divides $(u^m-1)^k$ for a suitable choice of positive integers $m$ and $k$.
	Now the desired result follows by the previous lemma.
\end{proof}

%
%
%

%
\subsection{Equidistribution}
\label{subsection:equidistribution}

Let $\lambda$ denote the Haar measure on $\R/\Z$.
For any sequence $s:\Z\to \R/\Z$, we define a probability measure $\lambda_n$ on $\R/\Z$ as
\begin{equation}
	\lambda_n = \frac{1}{2n+1} \sum_{i=-n}^n \delta_{s_i}
\end{equation}
where $\delta_{s_i}$ denotes the Dirac mass at $s_i$.
We say that $s$ is \define{equidistributed} if $\lambda_n\to \lambda$ in the weak-$*$ sense.
Weyl's equidistribution theorem\footnote{See Theorem 1.4 in \cite{einsiedler_ward_ergodic_theory}} states that every polynomial sequence in $\R/\Z$ is either periodic or equidistributes.

Similarly, we say that a \emph{$k$-dimensional sequence} $t:\Z^k \to \R/\Z$ is \define{equidistributed} if the probability measures
\begin{equation}
	\lambda_n = \frac{1}{(2n+1)^k} \sum_{p\in [-n,n]^k} \delta_{t_p}
\end{equation}
converges to $\lambda$ in the weak-$*$ sense.
For later use we need the following lemma.

\begin{lemma}
	\label{lemma:the equidistribution points form a measurable set}
	Let $(Y, \mc Y, \nu)$ be a probability space equipped with a measure preserving $\Z^3$-action.
	Let $\Gamma$ be a finite index subgroup of $\Z^3$ and $\vp:Y\to \R/\Z$ be a measurable function.
	For each $y\in Y$ we have the function $\vp_y:\Gamma\to \R/\Z$ which takes $g\in \Gamma$ to $\vp(g\cdot y)$.
	Then the sets
	\begin{equation}
		E=\set{y\in Y:\ \vp_y:\Gamma\to \R/\Z \text{ equidistributes}}, \quad
		K=\set{y\in Y:\ \vp_y:\Gamma\to \R/\Z \text{ is constant}}
	\end{equation}
	are measurable.
\end{lemma}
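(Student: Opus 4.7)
The plan is to treat the two sets separately, using the fact that $\Gamma$ is countable. For $K$, observe that $K = \bigcap_{g \in \Gamma} K_g$, where $K_g = \{y \in Y : \vp(g \cdot y) = \vp(y)\}$. Since the $\Z^3$-action on $Y$ is measurable, for each $g \in \Gamma$ the map $y \mapsto (\vp(y), \vp(g \cdot y))$ is measurable into $(\R/\Z)^2$, so each $K_g$ is the preimage of the diagonal and hence measurable. As $\Gamma$ is countable, $K$ is a countable intersection of measurable sets.

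For $E$, I would first fix a $\Z$-basis of $\Gamma$, identifying it with $\Z^3$, so that $\vp_y$ may be regarded as a sequence $\Z^3 \to \R/\Z$. By Weyl's equidistribution criterion, $\vp_y$ equidistributes if and only if for every nonzero integer $m$,
\[
\Phi_{m,n}(y) := \frac{1}{(2n+1)^3} \sum_{p \in [-n,n]^3} e^{2\pi i m \vp(p \cdot y)} \longrightarrow 0 \text{ as } n \to \infty.
\]
For each fixed $m$ and $n$, the function $\Phi_{m,n} : Y \to \C$ is a finite sum of measurable functions, since each summand $y \mapsto e^{2\pi i m \vp(p \cdot y)}$ is measurable by the measurability of $\vp$ and of the $\Z^3$-action. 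Hence $\Phi_{m,n}$ is measurable.

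Next I would assemble these into a single measurable map $\Psi_m : Y \to \R^\N$ sending $y$ to the sequence $(|\Phi_{m,n}(y)|)_{n \in \N}$; measurability of $\Psi_m$ is immediate from Lemma \ref{lemma:weak star measurability} applied to the family of coordinate projections on $\R^\N$. By Lemma \ref{lemma:sequences converging to zero form a measurable set}, the set $S \subseteq \R^\N$ of sequences converging to zero is measurable. Therefore
\[
E = \bigcap_{m \in \Z \setminus \{0\}} \Psi_m^{-1}(S)
\]
is a countable intersection of measurable sets, and hence measurable.

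The only genuinely non-routine point is the bookkeeping needed to confirm that the notion of equidistribution for sequences indexed by the subgroup $\Gamma$ is well-defined and agrees, after identification with $\Z^3$, with the averaging over cubes $[-n,n]^3$ used in Section \ref{subsection:equidistribution}, so that Weyl's criterion applies as stated. Once that is in place, the measurability proof is a standard combination of Weyl's criterion, the measurability of the $\Z^3$-action, and the two measure-theoretic lemmas already established in the paper.
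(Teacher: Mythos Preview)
Your proof is correct. For $K$ it is essentially the same as the paper's (the paper writes $K=\bigcap_{n,k\geq 1}K_{n,k}$ with $K_{n,k}=\{y:|\vp(y)-\vp(p\cdot y)|<1/k\text{ for }p\in[-n,n]^3\}$, which is just an $\varepsilon$-version of your diagonal preimage argument).

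For $E$ your route genuinely differs from the paper's. The paper works directly with the definition of equidistribution: it metrizes $\mc P(\R/\Z)$ in the weak* topology, checks via Lemma~\ref{lemma:weak star measurability} that the map $y\mapsto\lambda_{n,y}$ (the empirical measure) is measurable into $\mc P(\R/\Z)$, and then passes to $\R^{\N}$ via $y\mapsto(d(\lambda_{n,y},\lambda))_{n\geq 1}$ before invoking Lemma~\ref{lemma:sequences converging to zero form a measurable set}. You instead invoke Weyl's criterion to replace ``$\lambda_{n,y}\to\lambda$ weak*'' by the countable family of scalar conditions $\Phi_{m,n}(y)\to 0$, $m\in\Z\setminus\{0\}$, and then apply Lemma~\ref{lemma:sequences converging to zero form a measurable set} to each $m$ separately. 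Your approach avoids topologizing the space of measures and is more elementary in that sense; the paper's approach has the virtue of not relying on any characterization of equidistribution beyond its definition, and would transfer unchanged to a target more general than $\R/\Z$. Both collapse the general $\Gamma$ to $\Z^3$ by the same basis identification you flag at the end.
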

\begin{proof}\footnote{Thanks to Ronnie Pavlov and Nishant Chandgotia for helping with this proof.}
	We give the proof for $\Gamma=\Z^3$.
	The general case is identical after identifying $\Gamma$ with $\Z^3$ via a choice of basis, the resulting $\Z^3$-action on $(Y, \mc Y, \nu)$ being again measure preserving.
	First we show that $E$ is measurable.
	We know that $\mc P(\R/\Z)$, the set of all the probability measures in $\R/\Z$, is metrizable (in the weak-$*$ topology).
	Choose a metric $d$ on $\mc P(\R/\Z)$.
	For each $n\geq 1$ and each $y\in Y$, define probability measure $\lambda_{n, y}$ by writing
	\begin{equation}
		\lambda_{n, y} = \frac{1}{(2n+1)^3} \sum_{p\in [-n, n]^3} \delta_{\vp_y(p)}
	\end{equation}
	For a fixed continuous $\xi\in C(\R/\Z)$, the composition of $y\mapsto\lambda_{n, y}$ with the integration functional $\eta\mapsto\int \xi\, d\eta$ is
	\[
		y\longmapsto \int \xi\, d\lambda_{n, y} = \frac{1}{(2n+1)^3}\sum_{p\in[-n, n]^3} \xi(\vp(p\cdot y)),
	\]
	a finite sum of measurable functions (each $y\mapsto\vp(p\cdot y)$ is measurable since the action is measurable and $\vp$ is measurable), hence measurable.
	As the integration functionals generate the weak-$*$ structure on $\mc P(\R/\Z)$, Lemma \ref{lemma:weak star measurability} shows that the map $Y\to \mc P(\R/\Z)$ taking $y$ to $\lambda_{n, y}$ is measurable for each $n\geq 1$.
	Therefore, the composite
	\begin{figure}[H]
		\centering
		\begin{tikzcd}
			&	(\eta_n:\ n\geq 1)\ar[r, mapsto] 	& (d(\eta_n, \lambda):\ n\geq 1)\\
			Y\ar[r] 	& \mc P(\R/\Z)^{\N}\ar[r] & 	\R^{\N}\\
			y\ar[r, mapsto]	& (\lambda_{n, y}:\ n\geq 1)\\
		\end{tikzcd}
	\end{figure}
	\noindent
	is measurable.
	If $S$ is the set of all the elements $(x_n:\ n\geq 1)$ in $\R^{\N}$ such that $x_n\to 0$, then by Lemma \ref{lemma:sequences converging to zero form a measurable set} we have $S$ is measurable.
	It is clear that $E$ is nothing but the preimage of $S$ under the composition above, and hence $E$ is measurable.

	Now we show that $K$ is measurable.
	For each $n, k\geq 1$, define
	\begin{equation}
		K_{n, k} = \set{y\in Y:\ \norm{\vp(y)- \vp(p\cdot y)}< 1/k \text{ for all } p\in [-n, n]^3}
	\end{equation}
	where $\norm{\cdot}$ denotes the distance to the nearest integer, the standard translation-invariant metric on $\R/\Z$.
	Each $K_{n,k}$ is measurable, being a finite intersection (over $p\in[-n,n]^3$) of preimages of an open set under the measurable maps $y\mapsto\vp(y)-\vp(p\cdot y)$.
	Now $K=\bigcap_{n, k\geq 1} K_{n, k}$ and hence $K$ is measurable.
\end{proof}


%
%
%

%
\subsection{Pure and Mixed Statistics}


Now suppose $(Y, \mc Y, \nu)$ is a probability measure space equipped with an ergodic $\Z^3$-action.
Let $\vp:Y\to \R/\Z$ be a measurable function.
For each $y\in Y$ we get a map $\vp_y:\Z^3\to \R/\Z$ defined as $\vp_y(v)=\vp(v\cdot y)$.

\begin{lemma}
	\label{lemma:pure statistics lemma}
	\textit{\textbf{Pure Statistics Lemma.}}
	Suppose the $3$-dimensional sequence $\vp_y:\Z^3\to \R/\Z$ is either a constant or equidistributed for $\nu$-a.e. $y\in Y$.
	Then $\vp_*\nu$ is either a Dirac measure or the Haar measure on $\R/\Z$.
\end{lemma}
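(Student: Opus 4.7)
My plan is to use ergodicity to show that one of the two alternatives in the hypothesis occurs almost everywhere, and then in each case identify $\varphi_*\nu$ explicitly.

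First I would define $E=\{y\in Y:\varphi_y\text{ is equidistributed}\}$ and $K=\{y\in Y:\varphi_y\text{ is constant}\}$. These are measurable by Lemma \ref{lemma:the equidistribution points form a measurable set}, and they are disjoint (a constant sequence is not equidistributed in $\R/\Z$). The key structural observation is that both $E$ and $K$ are $\Z^3$-invariant: for $g\in \Z^3$, $\varphi_{g\cdot y}(v)=\varphi_y(v+g)$, which is just a translate of the sequence $\varphi_y$; translation by a fixed vector changes the empirical measures on $[-n,n]^3$ by only an $O(n^2)/(2n+1)^3$ boundary effect, so equidistribution (and constancy) is preserved under translation. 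By hypothesis $\nu(E\cup K)=1$, and by ergodicity of the $\Z^3$-action one of $E$, $K$ has full $\nu$-measure.

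If $\nu(K)=1$, then $\varphi(v\cdot y)=\varphi(y)$ for $\nu$-a.e. $y$ and every $v\in \Z^3$, so $\varphi$ is $\Z^3$-invariant and ergodicity forces $\varphi$ to be $\nu$-a.e. constant. Hence $\varphi_*\nu$ is a Dirac measure. Suppose instead $\nu(E)=1$. Fix a continuous test function $f:\R/\Z\to \C$. The mean ergodic theorem (applied to $f\circ\varphi\in L^2(Y,\nu)$ and using ergodicity) gives, for $\nu$-a.e. $y$,
\begin{equation}
	\frac{1}{(2n+1)^3}\sum_{p\in [-n,n]^3} f(\varphi(p\cdot y)) \longrightarrow \int_Y f\circ \varphi\,d\nu = \int_{\R/\Z} f\,d(\varphi_*\nu).
\end{equation}
On the other hand, since $y\in E$ for $\nu$-a.e. $y$, the equidistribution of $\varphi_y$ forces the same average to converge to $\int_{\R/\Z} f\,d\lambda$, where $\lambda$ is Haar measure. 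Comparing the two limits yields $\int f\,d(\varphi_*\nu)=\int f\,d\lambda$ for every continuous $f$, so $\varphi_*\nu=\lambda$.

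I expect no serious obstacle here; the only delicate point is verifying the $\Z^3$-invariance of $E$ and $K$, which is really just the observation that a Cesàro average over a growing box is insensitive to a bounded shift of the index. The use of a countable dense family of continuous test functions to apply the ergodic theorem on a common full-measure set is routine.
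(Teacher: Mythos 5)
Your proof is correct and follows essentially the same route as the paper: invariance of $E$ and $K$ plus ergodicity forces one of them to have full measure, and comparison of the Birkhoff averages of $f\circ\varphi$ against the equidistribution (resp.\ constancy) hypothesis identifies $\varphi_*\nu$. One small terminological slip: the almost-everywhere convergence of box averages you invoke is the pointwise (Birkhoff) ergodic theorem for $\Z^3$-actions, not the mean ergodic theorem, but the intended argument is clearly the right one.
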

\begin{proof}
	The set $E$ of points $y$ in $Y$ such that $\vp_y$ equidistributes form a measurable subset of $Y$.
	It is easy to check that this set is $\Z^3$-invariant.
	Thus this set either has full measure or has zero measure.
	Similarly, the set $K$ of points $y$ in $Y$ for which $\vp_y$ is constant either has full measure or zero measure.
	Now we have a full measure subset $S$ of $Y$ such that for each $y\in S$ we have
	\begin{equation}
		\lim_{n\to \infty}\int_{\R/\Z} F\ d\lrb{ \frac{1}{(2n+1)^3} \sum_{p\in [-n, n]^3} \delta_{\vp_y(p)}}
		=
		\lim_{n\to \infty}\frac{1}{(2n+1)^3} \sum_{p\in [-n, n]^3} F\circ \vp(p\cdot y)
	\end{equation}
	which by the Birkhoff ergodic theorem is equal to
	\begin{equation}
		\int_Y F\circ \vp\ d\nu = \int_{\R/\Z} F\ d(\vp_*\nu)
	\end{equation}
	for all $F\in C(\R/\Z)$.\footnote{First one may establish this for a countable dense subset of $C(\R/\Z)$ and then upgrade to all of $C(\R/\Z)$.}
	If $E$ has full measure, then, by the Riesz representation theorem we must have $\vp_*\nu$ is the Haar measure and if $K$ has full measure then we must have $\vp_*\nu$ is a Dirac measure.
	Since one of these two possibilities must occur, we are done.
\end{proof}

\begin{lemma}
	\label{lemma:mixed statistics lemma}
	\textit{\textbf{Mixed Statistics Lemma.}}
	Suppose for $\nu$-a.e. $y$ in $Y$ there is a finite index subgroup $\Gamma_y$ of $\Z^3$ such that the $3$-dimensional sequence obtained by restricting $\vp_y$ to any coset of $\Gamma_y$ is either a constant or equidistributed.
	Then there is a finite index subgroup $\Gamma$ of $\Z^3$ such that for any $\Gamma$-ergodic component $(E, \nu_E)$ of $Y$ we have $\vp_*\nu_E$ is either a Dirac measure or the Haar measure on $\R/\Z$.
\end{lemma}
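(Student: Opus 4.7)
The plan is to first isolate a single finite index subgroup $\Gamma \subseteq \Z^3$ that witnesses the constant/equidistribution dichotomy for $\nu$-a.e.~$y$ simultaneously, and then apply the Pure Statistics Lemma on each $\Gamma$-ergodic component. For each finite index subgroup $\Gamma$ of $\Z^3$, define
\begin{equation}
Y_\Gamma = \set{y\in Y:\ \vp_y\text{ restricted to every coset of }\Gamma\text{ is constant or equidistributes}}.
\end{equation}
Since $\Gamma$ has finitely many cosets, Lemma \ref{lemma:the equidistribution points form a measurable set} (applied coset by coset, after identifying each coset with $\Gamma\cong \Z^3$) gives that $Y_\Gamma$ is measurable.

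The next observation is that each $Y_\Gamma$ is $\Z^3$-invariant. Indeed, for $v\in \Z^3$ we have $\vp_{v\cdot y}(p) = \vp_y(p+v)$, so the restriction of $\vp_{v\cdot y}$ to $c+\Gamma$ is a translate of the restriction of $\vp_y$ to $(c+v)+\Gamma$; both constancy and equidistribution are shift-invariant, and the family of cosets $c+\Gamma$ is permuted as $c$ varies. Hence by ergodicity of the ambient $\Z^3$-action, each $Y_\Gamma$ has $\nu$-measure $0$ or $1$. The hypothesis of the lemma states exactly that $\bigcup_\Gamma Y_\Gamma$ has full measure, with the union ranging over the countable collection of finite index subgroups of $\Z^3$; therefore some $Y_\Gamma$ has full measure, and we fix such a $\Gamma$.

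Now consider the ergodic decomposition of $(Y,\nu)$ under the restricted $\Gamma$-action, $\nu = \int \nu_E\, d\pi(E)$. For a $\pi$-typical ergodic component $(E,\nu_E)$ and $\nu_E$-a.e.~$y\in E$, the sequence $\vp_y$ restricted to the trivial coset $0+\Gamma = \Gamma$ is constant or equidistributes. Since $\Gamma$ is isomorphic to $\Z^3$ (and the boxes in $\Gamma$ form a Følner sequence under this isomorphism), the Pure Statistics Lemma applies verbatim to the ergodic $\Gamma$-action on $(E,\nu_E)$ and the function $\vp$, yielding that $\vp_*\nu_E$ is either a Dirac measure or the Haar measure on $\R/\Z$.

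The main obstacle is the very first step: ensuring the existence of a uniform $\Gamma$. A naive attempt to shrink the $y$-dependent subgroups $\Gamma_y$ to a common refinement fails because equidistribution along $c+\Gamma_y$ does not automatically descend to sub-cosets of a smaller subgroup (even though constancy does). The argument above circumvents this entirely by using countability of the lattice of finite index subgroups together with the $0$--$1$ dichotomy provided by $\Z^3$-ergodicity. A minor technicality is ensuring that ``equidistributes along $\Gamma$'' has a canonical meaning consistent with the definition of Section \ref{subsection:equidistribution}; this is settled by fixing any $\Z$-basis of $\Gamma$ and averaging over the corresponding boxes, and standard arguments show the resulting notion is independent of this choice.
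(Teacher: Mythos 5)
Your proof is correct and essentially the same as the paper's: both hinge on the countability of the lattice of finite index subgroups of $\Z^3$, the measurability of the set of $y$ for which $\vp_y$ is constant or equidistributes on each coset of a fixed subgroup (via Lemma \ref{lemma:the equidistribution points form a measurable set}), the $\Z^3$-invariance of these sets, and ergodicity to promote positive measure to full measure before invoking the \hyperref[lemma:pure statistics lemma]{Pure Statistics Lemma}. The only cosmetic difference is that the paper packages the selection step as a measurable map $\Psi$ into the set of finite index subgroups and argues one fiber has full measure, whereas you apply the $0$--$1$ dichotomy directly to the sets $Y_\Gamma$; the two formulations are equivalent.
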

\begin{proof}
	Let $\Gamma_1, \Gamma_2, \Gamma_3, \ldots$ be an enumeration of all the finite index subgroups of $\Z^3$.
	For each $i\geq 1$ let
	\begin{equation}
		Y_i =\bigcap_{g\in \Z^3} \lrp{
			\set{y\in Y|\ \vp_{g\cdot y}:\Gamma_i\to \R/\Z \text{ equidistributes}}
			\cup
			\set{y\in Y|\ \vp_{g\cdot y}:\Gamma_i\to \R/\Z \text{ is constant}}
		}
	\end{equation}
	By Lemma \ref{lemma:the equidistribution points form a measurable set} we know that each $Y_i$ is a measurable subset of $Y$.
	Note that each $Y_i$ is $\Z^3$-invariant.
	Define a map $\Psi:Y\to \mc S(\Z^3)$, where $\mc S(\Z^3)$ is the set of all the finite index subgroups of $\Z^3$, by writing
	\begin{equation}
		\Psi(y) = \Gamma_n
	\end{equation}
	where $n$ is the smallest positive integer $i$ such that on every coset of $\Gamma_i$ the sequence $\vp_y$ is either a constant or is equidistributed.
	Then note that
	\begin{equation}
		\Psi^{-1}(\Gamma_n) = Y_n\cap  \bigcap_{i=1}^{n-1} Y_i^c
	\end{equation}
	Therefore each fiber of $\Psi$ is measurable.\footnote{Thanks to Nishant Chandgotia for the argument showing the measurability of $\Psi$.}
	Since there are only countably many fibers of $\Psi$, there must exist a fiber with positive measure, whence by the ergodicity of the $\Z^3$ action we deduce that some fiber of $\Psi$ has full measure.
	Therefore, there is a finite index subgroup $\Gamma$ of $\Z^3$ such that for $\nu$-a.e. $y$ in $Y$ we have $\vp_y$ restricted to any coset of $\Gamma$ is either constant or equidistributed.
	Since $\Gamma$ acts ergodically on each $\Gamma$-ergodic component, the desired result follows from the \hyperref[lemma:pure statistics lemma]{Pure Statistics Lemma}.
\end{proof}

%
%
%

%
\subsection{Behaviour of Averages on Suitable Ergodic Components}

For a map $\vp:X\to \R$ we define a map $\bar \vp:X\to \R/\Z$ obtained by composing $\vp$ with the natural map $\R\to \R/\Z$.
There is a natural action of $\Z[u_1^\pm, u_2^\pm, u_3^\pm]$ on the set of maps taking $X$ into $\R/\Z$.\footnote{Recall that we write a typical element of $\Z[u_1^\pm, u_2^\pm, u_3^\pm]$ as $\sum_{v\in \Z^3} a_v U^v$.} 

For a subgroup $\Lambda$ of $\Z^3$ we write $\Lambda\otimes\R$ for the real subspace of $\R^3$ spanned by $\Lambda$,
\[
	\Lambda\otimes\R = \text{span}_\R(\Lambda) = \set{t_1\lambda_1+\cdots+t_m\lambda_m:\ m\geq 1,\ t_i\in\R,\ \lambda_i\in\Lambda}.
\]
Its dimension equals the rank of $\Lambda$.
In particular, each $\Lambda_i\otimes\R$ (with $\Lambda_i$ of rank $2$) is a plane through the origin.
The notation records the abstract tensor product $\Lambda\otimes_\Z\R$, a real vector space of dimension $\operatorname{rank}\Lambda$.
Since $\Lambda$ is a sublattice of $\Z^3$, the natural map $\Lambda\otimes_\Z\R\to\R^3$, $\lambda\otimes t\mapsto t\lambda$, is injective with image exactly this span, which is why we identify the two.
Note that $\Lambda\otimes\R$ is a real subspace, not a sublattice: it is not $\Z^3\cap(\Lambda\otimes\R)$, the latter being the (saturated) lattice of integer points of the span.

\begin{lemma}
	\label{lemma:unipotent two dimensions}
	\emph{(See \cite[Lemma 4.2]{bhattacharya_tilings})}
	Let $j\in \set{1, \ldots, n}$ be arbitrary.
	Then there is $h$ in $\Z^3$, lying in the real span of none of the $\Lambda_i$'s, such that
	\begin{equation}
		(U^h-1)^k\overline{f^{\Lambda_j}} = 0
	\end{equation}
	for some $k\geq 1$.
\end{lemma}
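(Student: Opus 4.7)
The plan is to convert the problem into a one-dimensional polynomial annihilation on bi-infinite sequences in $\R/\Z$ and then invoke Corollary \ref{corollary:annihilated by a polynomial unipotent}. The steps are: (i) choose a direction $h \in \Z^3$ outside all the $\Lambda_i$'s; (ii) for each $i \neq j$, produce an annihilator $(U^{h_i} - 1)$ that kills both $f^{\Lambda_i}$ and $f_0$, with its exponent $h_i$ agreeing modulo $\Lambda_j$ with a positive multiple of $h$; (iii) collapse the resulting product into a Laurent polynomial in $U^h$ via $\Lambda_j$-invariance of $\overline{f^{\Lambda_j}}$, and then apply the corollary.

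Let $\Gamma \subseteq \Z^3$ be the finite-index subgroup fixing $f_0 := f - \sum_i f^{\Lambda_i}$ supplied by Lemma \ref{lemma:very weakly periodic decomposition theorem}. I will pick $h \in \Z^3$ satisfying $h \notin \bigcup_i \Lambda_i$ and $h \notin \Q \Lambda_j$ (equivalently, the image of $h$ in $\Z^3/\Lambda_j$ has infinite order); these conditions exclude only finitely many proper subgroups/subspaces, so such $h$ exists. For each $i \neq j$, property (d) of Lemma \ref{lemma:choosing planes intersection and all that} forces $\Lambda_i + \Lambda_j$ to have rank $3$, so some positive integer combination $c_i h + \lambda_i$ with $\lambda_i \in \Lambda_j$ lies in $\Lambda_i$; after further scaling by the index $[\Z^3 : \Gamma]$, we obtain $h_i := c_i h + \lambda_i \in \Lambda_i \cap \Gamma$ with $c_i \geq 1$ and $\lambda_i \in \Lambda_j$ (flipping sign if needed).

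Set $P(U) := \prod_{i \neq j}(U^{h_i} - 1)$. Each factor annihilates $f_0$ (because $h_i \in \Gamma$) and $f^{\Lambda_i}$ (because $h_i \in \Lambda_i$), so from $f = f_0 + \sum_i f^{\Lambda_i}$ we obtain $P(U) f = P(U) f^{\Lambda_j}$. Since $P(U)$ has integer coefficients and $f$ is $\{0,1\}$-valued, $P(U) f$ is pointwise integer-valued, whence $P(U) \overline{f^{\Lambda_j}} = 0$ in $\R/\Z$. Now I exploit $\Lambda_j$-invariance: because $\lambda_i \in \Lambda_j$ and $\overline{f^{\Lambda_j}}$ (together with every intermediate iterate, all of which remain $\Lambda_j$-invariant) is fixed by $U^{\lambda_i}$, we have $(U^{h_i} - 1) \overline{f^{\Lambda_j}} = (U^{c_i h} - 1) \overline{f^{\Lambda_j}}$; iterating yields $\prod_{i \neq j}(U^{c_i h} - 1) \overline{f^{\Lambda_j}} = 0$.

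Finally, for any $x \in X$, the bi-infinite sequence $s_n := \overline{f^{\Lambda_j}}((-nh) \cdot x) \in \R/\Z$ satisfies $\prod_{i \neq j}(u^{c_i} - 1) s = 0$, where $u$ denotes the unit shift. Corollary \ref{corollary:annihilated by a polynomial unipotent} supplies positive integers $m, k$ (independent of $x$, depending only on the $c_i$'s) with $(u^m - 1)^k s = 0$, which translates back to $(U^{mh} - 1)^k \overline{f^{\Lambda_j}} = 0$; the vector $h' := mh$ lies outside $\bigcup_i \Lambda_i$ since each $\Lambda_i$ is a subgroup. The main technical obstacle is the second paragraph: producing $h_i \in \Lambda_i \cap \Gamma$ simultaneously of the form $c_i h + \lambda_i$ with $c_i \geq 1$ and $\lambda_i \in \Lambda_j$. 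It is precisely this alignment—rather than a generic choice of $h_i$—that lets $\Lambda_j$-invariance collapse the multi-variable product $P(U)$ into a single-variable Laurent polynomial, opening the door to Corollary \ref{corollary:annihilated by a polynomial unipotent}.
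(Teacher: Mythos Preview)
Your proof is correct and follows essentially the same route as the paper's: pick a direction $h$, build a product $\prod_{i\neq j}(U^{h_i}-1)$ whose factors lie in $\Lambda_i\cap\Gamma$ so that it kills $f_0$ and every $f^{\Lambda_i}$ with $i\neq j$, then use $\Lambda_j$-invariance to collapse the exponents to multiples of $h$ and invoke Corollary~\ref{corollary:annihilated by a polynomial unipotent}. The paper starts from $w_i\in\Lambda_i$ and expresses $lw_i=a_ih+w_{ij}$ with $w_{ij}\in\Lambda_j$, whereas you start from $h$ and $\Lambda_j$ and land in $\Lambda_i$; these are the same construction read in opposite directions.

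One small slip: your final claim that $mh\notin\bigcup_i\Lambda_i$ ``since each $\Lambda_i$ is a subgroup'' is not a valid inference, because $\Z^3/\Lambda_i$ can have torsion and so $h\notin\Lambda_i$ does not preclude $mh\in\Lambda_i$. You already imposed $h\notin\Q\Lambda_j$ for exactly this reason; simply strengthen the initial choice to $h\notin\bigcup_i\Q\Lambda_i$ (still avoiding only finitely many proper subspaces), and then $mh\notin\Lambda_i$ for every $i$ and every $m\geq 1$. The paper's proof is equally casual on this point.
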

\begin{proof}
	The real spans $\Lambda_1\otimes\R, \ldots, \Lambda_n\otimes\R$ are finitely many two-dimensional subspaces of $\R^3$, each spanned by integer vectors and hence defined over $\Q$.
	We seek an integer vector lying in none of them, so we pass to the rational points.
	The intersections $(\Lambda_i\otimes\R)\cap\Q^3$ are proper $\Q$-subspaces of $\Q^3$.
	Since $\Q$ is infinite, $\Q^3$ is not the union of finitely many proper subspaces, so some $q\in\Q^3$ lies in none of them, and therefore in none of the $\Lambda_i\otimes\R$.
	Clearing denominators (which does not change which of these spans the vector belongs to) yields $h\in\Z^3$ lying in the real span of none of the $\Lambda_i$.
	Equivalently, $\Z h+\Lambda_i$ has rank $3$ for every $i$.
	
	Let $\Gamma_0$ be a finite index subgroup of $\Z^3$ such that $f_0$ is $\Gamma_0$-invariant (recall the definition of $f_0$ from Lemma \ref{lemma:very weakly periodic decomposition theorem}).
	Since $\Z h+\Lambda_j$ has rank $3$ it is of finite index, so $(\Z h + \Lambda_j)\cap \Gamma_0$ is of finite index.
	Fix a positive integer $l$ with $l\Z^3\subseteq (\Z h + \Lambda_j)\cap \Gamma_0$.
	For each $i\neq j$, the lattice $\Lambda_i$ is not contained in $\Lambda_j\otimes\R$ (because $\Lambda_i+\Lambda_j$ has rank $3$ by property (d) of Lemma \ref{lemma:choosing planes intersection and all that}), so we may choose $w_i\in \Lambda_i$ with $w_i\notin \Lambda_j\otimes\R$.
	Consider the Laurent polynomial
	\begin{equation}
		q(U)
		=
		(U^{lh}-1)\prod_{i:\ i\neq j} (U^{l w_i} - 1),
	\end{equation}
	where the leading factor $U^{lh}-1$ is present to handle $\overline{f_0}$ (and ensures $q\neq 1$ even when $n=1$, in which case the product is empty).

	We first show that $q$ annihilates $\overline{f^{\Lambda_j}}$.
	By Lemma \ref{lemma:very weakly periodic decomposition theorem} we have $f=f_0+\sum_{i=1}^n f^{\Lambda_i}$, and since $f=1_A$ is $\set{0,1}$-valued, $\overline f=0$.
	Reducing modulo $\Z$ therefore gives
	\begin{equation}
		\overline{f^{\Lambda_j}} = -\overline{f_0}-\sum_{i\neq j}\overline{f^{\Lambda_i}}.
	\end{equation}
	Now $lh\in l\Z^3\subseteq\Gamma_0$ and $f_0$ is $\Gamma_0$-invariant, so $(U^{lh}-1)\overline{f_0}=0$, and for each $i\neq j$ we have $lw_i\in\Lambda_i$ while $f^{\Lambda_i}$ is $\Lambda_i$-invariant, so $(U^{lw_i}-1)\overline{f^{\Lambda_i}}=0$.
	As the factors of $q$ commute, $q$ annihilates each of $\overline{f_0}$ and $\overline{f^{\Lambda_i}}$ for $i\neq j$, and hence, by the displayed decomposition, $q(U)\overline{f^{\Lambda_j}}=0$.

	On the other hand, since $l\Z^3\subseteq \Z h+\Lambda_j$, for each $i\neq j$ we may write
	\begin{equation}
		lw_i = a_ih + w_{ij}, \qquad a_i\in\Z,\ w_{ij}\in\Lambda_j,
	\end{equation}
	and, replacing $w_i$ by $-w_i$ if necessary, we may assume $a_i>0$.
	Note $a_i\neq 0$ since otherwise $lw_i=w_{ij}\in\Lambda_j\otimes\R$, contrary to the choice of $w_i$.
	Because $\overline{f^{\Lambda_j}}$ and all of its $\Z[u_1^\pm, u_2^\pm, u_3^\pm]$-translates are $\Lambda_j$-invariant and $w_{ij}\in\Lambda_j$, we have $U^{lw_i}\overline{f^{\Lambda_j}}=U^{a_ih}\overline{f^{\Lambda_j}}$.
	The leading factor $U^{lh}-1=(U^h)^l-1$ is already a power of $U^h$.
	Consequently
	\begin{equation}
		q(U)\overline{f^{\Lambda_j}}
		=
		(U^{lh}-1)\lrb{\prod_{i:\ i\neq j} (U^{a_i h} - 1)}\overline{f^{\Lambda_j}}.
	\end{equation}
	Combining the last two paragraphs, the operator $(U^{lh}-1)\prod_{i\neq j}(U^{a_ih}-1)$ annihilates $\overline{f^{\Lambda_j}}$.
	This is a product of unipotents in the single direction $h$, with positive exponents $l$ and $a_i$, so by Corollary \ref{corollary:annihilated by a polynomial unipotent} there are a positive integer $m$ and an integer $k\geq 1$ with $(U^{mh}-1)^k\overline{f^{\Lambda_j}}=0$.
	Since $h$ lies in the span of no $\Lambda_i$ and $m\neq 0$, the vector $mh$ also lies in the span of no $\Lambda_i$.
	Renaming $mh$ as $h$ completes the proof.
\end{proof}

\begin{lemma}
	\label{lemma:precursor to almost achieving weakly periodic decomposition}
	\emph{(See \cite[Lemma 4.3]{bhattacharya_tilings})}
	Let $i\in\set{1, \ldots, n}$ be given.
	Then there is a finite index subgroup $\Gamma_i$ of $\Z^3$ with the property that if $(E, \mu_E)$ is a $\Gamma_i$-ergodic component of $X$ then we have $\overline{f^{\Lambda_i}}_*\mu_E$ is either a Dirac measure or the Haar measure on $\R/\Z$.
\end{lemma}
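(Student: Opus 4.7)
The plan is to apply the Mixed Statistics Lemma (Lemma \ref{lemma:mixed statistics lemma}) to the measurable function $\vp = \overline{f^{\Lambda_i}}\colon X\to \R/\Z$; the finite index subgroup it produces will be the desired $\Gamma_i$. What I need to verify is that for $\mu$-a.e.\ $y\in X$ there exists a finite index subgroup $\Gamma_y$ of $\Z^3$ such that, on every coset of $\Gamma_y$, the restricted $3$-dimensional sequence $\vp_y$ is either constant or equidistributed.

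First I would invoke Lemma \ref{lemma:unipotent two dimensions} to produce a vector $h\in \Z^3$ (chosen to lie outside every $\R\Lambda_j$, which is what the proof of that lemma actually uses) and a positive integer $k$ with $(U^h-1)^k\vp = 0$. For $\mu$-a.e.\ $y\in X$ and every $c\in \Z^3$, this says that the $1$-dimensional sequence $t_c^y(n) := \vp_y(c+nh)$ is annihilated by $(u-1)^k$, and hence, by the argument of Lemma \ref{lemma:unipotent polynomial on cosets}, is a polynomial sequence in $\R/\Z$. Weyl's equidistribution theorem then forces each $t_c^y$ to be either periodic or equidistributed. Moreover, the $\Lambda_i$-invariance of $f^{\Lambda_i}$ implies that, for a.e.\ $y$, the function $\vp_y$ is genuinely $\Lambda_i$-invariant as a map $\Z^3\to \R/\Z$.

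Set $\Gamma_i' = \Z h+\Lambda_i$, which has rank $3$ and is therefore a finite index subgroup of $\Z^3$, and fix coset representatives $c_1,\ldots, c_r$ of $\Gamma_i'$ in $\Z^3$. For a.e.\ $y$, define $m_y$ to be the least common multiple of the (finitely many) periods of those $t_{c_j}^y$ which are periodic (take $m_y = 1$ if all of them equidistribute). Let $\Gamma_y = \Z(m_y h)+\Lambda_i$; its cosets in $\Z^3$ are represented by $c_j+\ell h$ for $1\le j\le r$ and $0\le\ell< m_y$. Using $\Lambda_i$-invariance, the restriction of $\vp_y$ to the coset $c_j+\ell h+\Gamma_y$ factors through projection onto the $m_yh$-direction; a direct averaging calculation over the box $[-N,N]^3$ inside $\Gamma_y$ then shows that the $3$-dimensional equidistribution or constancy of this restriction is equivalent to the $1$-dimensional equidistribution or constancy of the subsequence $n\mapsto t_{c_j}^y(\ell+n m_y)$. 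If $t_{c_j}^y$ is periodic with period dividing $m_y$ this subsequence is constant; if instead $t_{c_j}^y$ equidistributes, then Weyl applied to the polynomial $n\mapsto t_{c_j}^y(\ell+n m_y)$ shows it again equidistributes, since a polynomial sequence that was equidistributed on $\Z$ cannot become periodic when restricted to an arithmetic progression.

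Feeding the existence of $\Gamma_y$ into the Mixed Statistics Lemma then yields the desired finite index subgroup $\Gamma_i$. The main technical point will be the bookkeeping in the reduction of the $3$-dimensional averages to $1$-dimensional ones: once one observes that, on each coset of $\Gamma_y$, the sequence $\vp_y$ depends only on the $m_yh$-coordinate, the $(2N+1)^2$ copies arising from the two $\Lambda_i$-directions factor out of the Ces\`aro means, so that $3$-dimensional equidistribution along the rank-$3$ subgroup $\Gamma_y$ collapses to Weyl in dimension one along the $h$-direction.
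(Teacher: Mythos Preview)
Your proposal is correct and follows essentially the same approach as the paper: invoke Lemma \ref{lemma:unipotent two dimensions} to obtain $h$ and $k$ with $(U^h-1)^k\overline{f^{\Lambda_i}}=0$, combine with the $\Lambda_i$-invariance of $\overline{f^{\Lambda_i}}$, use Lemma \ref{lemma:unipotent polynomial on cosets} and Weyl to produce a suitable $\Gamma_y$ for almost every $y$, and then feed this into the Mixed Statistics Lemma. Your write-up is in fact more careful than the paper's terse version, since you make explicit both the need for $h\notin\R\Lambda_i$ (so that $\Z h+\Lambda_i$ has finite index) and the reduction of the $3$-dimensional averages to $1$-dimensional ones via the $\Lambda_i$-invariance; the paper leaves both of these implicit.
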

\begin{proof}
	Write $\vp=\overline{f^{\Lambda_i}}$.
	By Lemma \ref{lemma:unipotent two dimensions} we can find $h\in \Z^3$, in the real span of no $\Lambda_{i'}$, and a positive integer $k$ such that $(U^h-1)^k\vp=0$.
	Also, $(U^g-1)\vp=0$ for all $g\in \Lambda_i$, since $f^{\Lambda_i}$ is $\Lambda_i$-invariant.
	Passing to the pointwise sequences, for $\mu$-a.e. $x\in X$ we have $(U^h-1)^k \vp_x = 0$ and $(U^g-1)\vp_x=0$ for all $g\in \Lambda_i$, where $\vp_x:\Z^3\to\R/\Z$ is given by $\vp_x(v)=\vp(v\cdot x)$.

	Fix such an $x$.
	Because $(U^g-1)\vp_x=0$ for every $g\in\Lambda_i$, the sequence $\vp_x$ is constant on each coset of $\Lambda_i$.
	Since $h$ lies outside $\Lambda_i\otimes\R$, the subgroup $\Lambda_i+\Z h$ has rank $3$, hence finite index, so there are only finitely many cosets of $\Lambda_i+\Z h$ in $\Z^3$.
	On any one of these cosets the only variation of $\vp_x$ is in the $h$-direction, where $(U^h-1)^k\vp_x=0$ exhibits $t\mapsto\vp_x(\,\cdot\,+th)$ as a (one-dimensional) polynomial sequence by Lemma \ref{lemma:unipotent polynomial on cosets}.
	By Weyl's equidistribution theorem each such sequence either equidistributes or is periodic.
	Let $d\geq 1$ be a common multiple of the (finitely many) periods arising from the periodic cosets, and put $\Gamma_x = \Lambda_i+\Z(dh)$, a finite index subgroup of $\Z^3$.
	On every coset of $\Gamma_x$ the sequence $\vp_x$ is then constant (if the underlying $h$-direction sequence was periodic, $d$ being a multiple of its period) or equidistributed (if it was equidistributed).
	Now by the \hyperref[lemma:mixed statistics lemma]{Mixed Statistics Lemma} we can find a finite index subgroup $\Gamma_i$ of $\Z^3$ such that $\vp_*\mu_E$ is either a Dirac measure or the Haar measure for any $\Gamma_i$-ergodic component $(E, \mu_E)$ of $X$.
\end{proof}

\begin{remark}
	It can be easily argued that the conclusion of the above lemma does not change if we pass to any finite index subgroup of $\Gamma_i$.
	More precisely, the above lemma can be made more nuanced by saying that there is a finite index subgroup $\Gamma_i$ 
	such that whenever $\Gamma$ is any finite index subgroup of $\Gamma_i$ we have $\overline{f^{\Lambda_i}}_*\mu_E$ is either the Haar measure on $\R/\Z$ or a Dirac measure on $\R/\Z$ for any $\Gamma$-ergodic component $E$ of $X$.
	We will make use of this observation in what follows.
\end{remark}

The strategy to show that $A$ is $1$-weakly periodic is to show that there is a finite index subgroup $\Gamma$ of $\Z^3$ such that the intersection of $A$ with any $\Gamma$-ergodic component of $X$ is $1$-weakly periodic.
Our next lemma almost achieves this.
Since the conclusion of the above lemma does not change when $\Gamma_i$ is replaced by any finite index subgroup of $\Gamma_i$, this allows us to prove the following.

\begin{lemma}
	\label{lemma:almost achieving weakly periodic decomposition}
	\emph{(See \cite[Theorem 4.4]{bhattacharya_tilings})}
	There is a finite index subgroup $\Gamma$ of $\Z^3$ such that for each $\Gamma$-ergodic component $(E, \mu_E)$ of $X$, we have either $\mu_E(A\cap E) = 1/2$ or $A\cap E$ is $2$-periodic (or both).
\end{lemma}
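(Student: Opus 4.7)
The plan is to choose a finite-index subgroup $\Gamma \leq \Z^3$ that is simultaneously compatible with the decomposition $f = f_0 + \sum_i f^{\Lambda_i}$ and the Dirac/Haar dichotomy of the previous lemma, and then to analyze the restriction of each $f^{\Lambda_i}$ to a $\Gamma$-ergodic component. First, by Lemma \ref{lemma:very weakly periodic decomposition theorem}, $f_0$ is $3$-periodic, so there exists a finite-index $\Gamma_0 \leq \Z^3$ fixing $f_0$. For each $i$, let $\Gamma_i$ be the subgroup produced by Lemma \ref{lemma:precursor to almost achieving weakly periodic decomposition}. Set $\Gamma = \Gamma_0 \cap \bigcap_{i=1}^{n} \Gamma_i$; by the remark following that lemma, on each $\Gamma$-ergodic component $E$ the pushforward $(\overline{f^{\Lambda_i}})_*\mu_E$ is still either Dirac or Haar, and in addition $f_0|_E$ is a.e.\ equal to a constant $c_0$.

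Fixing a $\Gamma$-ergodic component $(E,\mu_E)$, write $\varphi_i = \overline{f^{\Lambda_i}}$ and split the index set into $I$ (Dirac at $c_i \in \R/\Z$) and $J$ (Haar). Reducing $f = f_0 + \sum_i f^{\Lambda_i}$ modulo $1$ and using $f \in \{0,1\}$ gives $\sum_i \varphi_i|_E \equiv -\overline{c_0} \pmod 1$ a.e.\ on $E$. I would then split into two cases. If $J = \emptyset$, each $f^{\Lambda_i}|_E$ is either essentially a constant $c_i \in (0,1)$ or essentially $\{0,1\}$-valued and $\Lambda_i$-invariant; collecting the latter indices into $I_0$ and setting $\tau = c_0 + \sum_{i \in I \setminus I_0} c_i$, the identity $f|_E = \tau + \sum_{i \in I_0} f^{\Lambda_i}|_E \in \{0,1\}$ forces $\tau$ to be a non-positive integer $-m$ with $\sum_{i \in I_0} f^{\Lambda_i}|_E \in \{m, m+1\}$ a.e. When $|I_0| \leq 1$ this directly yields that $A \cap E$ is a.e.\ either empty/full ($3$-periodic) or coincides with $\{f^{\Lambda_{i_0}}|_E = 1\}$, which is $(\Lambda_{i_0} \cap \Gamma)$-invariant and hence $2$-periodic. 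For $|I_0| \geq 2$, a further finite-index refinement of $\Gamma$ intersecting with the rank-$3$ subgroups $\Lambda_i + \Lambda_j$ (supplied by condition (d) of Lemma \ref{lemma:choosing planes intersection and all that}) trivialises enough of the binary pieces on each subcomponent to reduce to the same outcome. If $J \neq \emptyset$, then since a single Haar coordinate cannot sum to a constant we must have $|J| \geq 2$; each $f^{\Lambda_j}|_E$ for $j \in J$ has Lebesgue distribution on $[0,1]$, so $\int_E f^{\Lambda_j}\,d\mu_E = 1/2$, and combined with the constancy (mod $\Z$) of $\sum_{j \in J} f^{\Lambda_j}|_E$ and the binary nature of $f$ one concludes $\mu_E(A \cap E) = 1/2$.

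The main obstacle is the bookkeeping in the $|I_0| \geq 2$ Dirac subcase: several binary $\Lambda_i$-invariant pieces interact through the constraint $\sum_{i \in I_0} f^{\Lambda_i}|_E \in \{m, m+1\}$, and their union is not manifestly $2$-periodic. Here the pairwise transversality $\Lambda_i + \Lambda_j = \Z^3$ from Lemma \ref{lemma:choosing planes intersection and all that}(d) is indispensable and must be exploited by a careful finite-index refinement of $\Gamma$ so that on each smaller component only one binary contribution survives. A secondary delicate point is the arithmetic in the Haar case: one must verify that the joint distribution of the $f^{\Lambda_j}|_E$ for $j \in J$, coupled through the integrality of $f$, really forces the average of $f$ to be precisely $1/2$ rather than some other value compatible with the individual marginal means.
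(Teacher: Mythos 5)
Your choice of $\Gamma = \Gamma_0 \cap \bigcap_{i=1}^n \Gamma_i$ agrees with the paper, but after that the approaches diverge and yours runs into the gaps you yourself flag. You attempt a \emph{joint} analysis of all the $f^{\Lambda_i}|_E$ at once, constrained by the mod-$1$ identity $\sum_i \overline{f^{\Lambda_i}}|_E \equiv -\overline{c_0}$, and then try to book-keep the resulting combinatorics. The paper instead isolates a \emph{single} coordinate and shows that one is already enough, which is exactly what makes both of your difficulties vanish.

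Concretely, for the Haar case the paper uses: if \emph{any one} $\overline{f^{\Lambda_i}}_*\mu_E$ is Haar, then since $f^{\Lambda_i}$ already takes values in $[0,1]$, the pushforward $f^{\Lambda_i}_*\mu_E$ is Lebesgue on $[0,1]$. A small argument via Lemma \ref{lemma:a special periodicity lemma} upgrades this to $f^{l\Lambda_i}_*\mu_E$ being Lebesgue on $[0,1]$ for $l$ with $l\Lambda_i \subseteq \Gamma$, and then
\begin{equation}
\mu_E(A\cap E) = \int_E f\,d\mu_E = \int_E f^{l\Lambda_i}\,d\mu_E = \tfrac12
\end{equation}
by the Birkhoff ergodic theorem (using that $E$ is $l\Lambda_i$-invariant). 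No claim of the form $|J|\geq 2$ and no analysis of the joint distribution of the $f^{\Lambda_j}$ for $j\in J$ is ever needed; your ``secondary delicate point'' simply does not arise. For the Dirac case the paper again uses a single coordinate: if some $\overline{f^{\Lambda_i}}_*\mu_E$ is Dirac at $0$, then $f^{\Lambda_i}|_E$ is $\{0,1\}$-valued, and setting $S = \{x\in E : f^{\Lambda_i}(x)=1\}$ the chain of equalities
\begin{equation}
\mu_E(A) = \int_E f^{l\Lambda_i}\,d\mu_E = \mu_E(S), \qquad \mu_E(A) = \mu_E(A\cap S)
\end{equation}
forces $A\cap E = S$ up to a null set, and $S$ is $l\Lambda_i$-invariant hence $2$-periodic. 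Your ``main obstacle'' ($|I_0|\geq 2$, several binary pieces interacting) is a phantom of the joint decomposition: the paper never needs to argue that $A\cap E$ is assembled from the several binary $f^{\Lambda_i}$'s, because one of them alone already equals $1_{A\cap E}$. The appeal you make to the transversality $\Lambda_i+\Lambda_j$ is not the right tool here and is not used in the paper's proof of this lemma at all (it is used earlier, in Lemma \ref{lemma:very weakly periodic decomposition theorem}). The remaining case, where every $\overline{f^{\Lambda_i}}_*\mu_E$ is Dirac at a nonzero point, makes each $f^{\Lambda_i}$ constant on $E$, and then $f|_E = f_0|_E + \text{const}$ is $3$-periodic. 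In summary: your outline is not wrong in spirit but it is incomplete in exactly the two places you identify, and the missing idea is the one-coordinate-suffices reduction; once you adopt it, both acknowledged gaps close.
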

\begin{proof}
	Let $\Gamma_0$ be a finite index subgroup of $\Z^3$ such that $f_0$ is $\Gamma_0$-invariant.
	By Lemma \ref{lemma:precursor to almost achieving weakly periodic decomposition} we know that for each $i\in\set{1, \ldots, n}$ we have a finite index subgroup $\Gamma_i$ of $\Z^3$ 
	such that for each $\Gamma_i$-ergodic component $E$ of $X$ we have $\overline{f^{\Lambda_i}}_*\mu_E$ is either a Dirac measure or the Haar measure on $\R/\Z$.
	Define $\Gamma=\Gamma_0\cap\bigcap_{i=1}^n\Gamma_i$.
	We will show that $\Gamma$ satisfies the requirement of the lemma.
	Fix a $\Gamma$-ergodic component $E$ of $X$.
	There are two cases.

	\emph{Case 1: There is $i\in \set{1, \ldots, n}$ such that $\overline{f^{\Lambda_i}}_*\mu_E$ is the Haar measure $\lambda$ on $\R/\Z$.}
	We will show that $\mu_E(A\cap E)=1/2$.
	Note that since $f$, and hence $f^{\Lambda_i}$, is valued in the unit interval $I$, the fact that $\overline{f^{\Lambda_i}}_*\mu_E$ is the Haar measure on $\R/\Z$ implies that $f^{\Lambda_i}_*\mu_E$ is the Haar measure $\lambda_I$ on the unit interval $I$.
	Let $l\geq 1$ be such that $l\Z^3\subseteq \Gamma$.
	By Lemma \ref{lemma:a special periodicity lemma} we have $f^{l \Lambda_i} - f^{\Lambda_i}$ is $3$-periodic.
	So there is a finite index subgroup $\Gamma'$ of $\Z^3$ such that $f^{l \Lambda_i}-f^{\Lambda_i}$ is invariant under $\Gamma'$.
	Let $\Theta=\Gamma\cap \Gamma'$ and $(E_1, \mu_1), \ldots, (E_k, \mu_k)$ be all the $\Theta$-ergodic components of $X$ which are contained in $E$.
	For each $j\in \set{1, \ldots, k}$ let $f^{l\Lambda_i}-f^{\Lambda_i}\equiv c_j$ on $E_j$, where $c_j$ is some real number.
	By the remark following Lemma \ref{lemma:precursor to almost achieving weakly periodic decomposition} (applied to the finite index subgroup $\Theta\subseteq\Gamma_i$), each $\overline{f^{\Lambda_i}}_*\mu_j$ is either the Haar measure or a Dirac measure on $\R/\Z$.
	Now $\mu_E$ is the convex combination $\sum_{j=1}^k \mu_E(E_j)\,\mu_j$, so $\overline{f^{\Lambda_i}}_*\mu_E = \sum_j \mu_E(E_j)\,\overline{f^{\Lambda_i}}_*\mu_j$.
	Since this equals the non-atomic Haar measure, no $\mu_j$ of positive weight can contribute an atom, so $\overline{f^{\Lambda_i}}_*\mu_j$ is the Haar measure for every $j$, and hence $f^{\Lambda_i}_*\mu_j$ is the Haar measure on the unit interval.
	Therefore $f^{l\Lambda_i}_*\mu_j$ is the Haar measure on the interval $[c_j, c_j+1]$.
	But $f^{l\Lambda_i}$ is valued in $I=[0,1]$, and therefore $c_j$ must be $0$.
	Since this holds for every $j$, we have $f^{l\Lambda_i}=f^{\Lambda_i}$ on $E$, and hence $f^{l\Lambda_i}_*\mu_E$ is also the Haar measure on the unit interval.
	Now 
	\begin{equation}
		\mu_E(A\cap E) = \int_E 1_A\ d\mu_E = \int_E f\ d\mu_E = \int_E f^{l \Lambda_i}\ d\mu_E
	\end{equation}
	where the last equality is because of the ergodic theorem coupled with the fact that $E$ is $\Gamma$-invariant, and hence $l\Lambda_i$-invariant.
	But since $f^{l\Lambda_i}_*\mu_E$ is the Haar measure on $I$, we see that $\mu_E(A\cap E)= 1/2$.

	\emph{Case 2: There is no $i\in \set{1, \ldots, n}$ such that $\overline{f^{\Lambda_i}}_*\mu_E$ is the Haar measure on $\R/\Z$.}
	Thus $\overline{f^{\Lambda_i}}_*\mu_E$ is a Dirac measure for each $i$.
	We divide this into two subcases.

	\emph{Subcase 2.1: There is $i\in \set{1, \ldots, n}$ such that $\overline{f^{\Lambda_i}}_*\mu_E$ is the Dirac measure at $0$.}
	We will show that $A\cap E$ is $2$-periodic.
	Since $\overline{f^{\Lambda_i}}_*\mu_E=\delta_0$, we have $f^{\Lambda_i}\in\Z$ $\mu_E$-almost everywhere on $E$.
	Combined with $f^{\Lambda_i}\in I=[0,1]$, this means $f^{\Lambda_i}|_E$ takes values in the two-element set $\set{0, 1}$.
	Choose $l\geq 1$ such that $l\Lambda_i\subseteq \Gamma$. 
	Since $l\Lambda_i\subseteq\Lambda_i$, the $\Lambda_i$-invariant $\sigma$-algebra is contained in the $l\Lambda_i$-invariant one, so by the tower property $f^{\Lambda_i}$ is the conditional expectation of $f^{l\Lambda_i}$ given the $\Lambda_i$-invariant $\sigma$-algebra.
	On the $\Lambda_i$-invariant set $\set{f^{\Lambda_i}=1}$ we then have $\int f^{l\Lambda_i} = \int f^{\Lambda_i} = \mu_E(\set{f^{\Lambda_i}=1})$ while $f^{l\Lambda_i}\leq 1$, forcing $f^{l\Lambda_i}=1$ there.
	Similarly, $f^{l\Lambda_i}=0$ on $\set{f^{\Lambda_i}=0}$ since $f^{l\Lambda_i}\geq 0$.
	Thus $f^{l\Lambda_i}$ coincides with $f^{\Lambda_i}$ on $E$.
	Now let
	\begin{equation}
		S=\set{x\in E:\ f^{\Lambda_i}(x) = 1} = \set{x\in E:\ f^{l\Lambda_i}(x) = 1}
	\end{equation}
	Then
	\begin{equation}
		\mu_E(S)
		=
		\int_E 1_S\ d\mu_E
		=
		\int_E f^{l\Lambda_i} \ d\mu_E \reason{(*)}{=}
		\int_E f\ d\mu_E
		= \mu_E(A)
	\end{equation}
	where $(*)$ is because $E$ is $\Gamma$-invariant and hence is $l\Lambda_i$-invariant.
	We also have
	\begin{equation}
		\int_E f^{l\Lambda_i} \ d\mu_E = \int_S f^{l\Lambda_i}\ d\mu_E = \int_S f\ d\mu_E = \mu_E(A\cap S)
	\end{equation}
	because $f^{l\Lambda_i}|_E$ is the same as $1_S$ and $S$ is $l\Lambda_i$-invariant.
	We deduce that $\mu_E(A) = \mu_E(S) = \mu_E(A\cap S)$ and therefore $A\cap E = S$.
	Since $S$ being $l\Lambda_i$-invariant is $2$-periodic, we have $A\cap E$ is $2$-periodic.

	\emph{Subcase 2.2: There is no $i\in \set{1, \ldots, n}$ such that $\overline{f^{\Lambda_i}}_*\mu_E$ is the Dirac measure at $0$.}
	In this case, for each $i$ the measure $\overline{f^{\Lambda_i}}_*\mu_E$ is the Dirac measure at some point $c_i\neq 0$ of $\R/\Z$.
	Since $f^{\Lambda_i}$ is valued in $I=[0,1]$ and the only point of $[0,1]$ reducing to $c_i\neq 0$ modulo $\Z$ is $c_i$ itself, it follows that $f^{\Lambda_i}=c_i$ is constant on $E$.
	As $\Gamma\subseteq\Gamma_0$, the function $f_0$ is $\Gamma$-invariant; hence, by $f = f_0+ \sum_{i=1}^n f^{\Lambda_i}$, the restriction $f|_E$ is $\Gamma$-invariant and so $3$-periodic, and thus $A\cap E$ is $3$-periodic (which in particular implies that $A\cap E$ is $2$-periodic).
	This completes the proof.
\end{proof}

%
%
%

%

\subsection{Weakly Periodic Decomposition}

\begin{lemma}
	\label{lemma:complement of weakly periodic is weakly periodic}
	\emph{(See \cite[Lemma 5.1]{bhattacharya_tilings})}
	Let $(Y, \mc Y, \nu)$ be a probability space equipped with an ergodic $\Z^3$-action.
	If $E\subseteq Y$ is $3$-periodic and $B\subseteq E$ is $2$-weakly periodic, then $E\setminus B$ is also $2$-weakly periodic.
\end{lemma}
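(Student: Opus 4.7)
The plan is to prove this by strong induction on $k$, the number of 2-periodic pieces in the decomposition $B = B_1 \sqcup \cdots \sqcup B_k$, using a saturation device that isolates one piece at a time while keeping everything 3-periodic. Let $\Lambda$ be a rank-3 subgroup for which $E$ is $\Lambda$-invariant, and for each $i$ let $\Lambda_i$ be a rank-2 subgroup for which $B_i$ is $\Lambda_i$-invariant. After replacing $\Lambda_i$ by $\Lambda_i \cap \Lambda$, which is still rank 2 since $\Lambda$ has finite index in $\Z^3$, I may assume $\Lambda_i \subseteq \Lambda$.

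The base cases are immediate. For $k = 0$ the set $E \setminus B = E$ is 3-periodic, hence 2-periodic and so 2-weakly periodic. For $k = 1$ I would record the \emph{Key Fact}: $E \setminus B_1$ is invariant under $\Lambda \cap \Lambda_1 = \Lambda_1$ and is therefore 2-periodic. Before the inductive step I would reduce to the case in which the rank-2 subgroups $\Lambda_1, \ldots, \Lambda_k$ span pairwise distinct planes of $\R^3$: if two of them spanned the same plane, then $\Lambda_i \cap \Lambda_j$ would still have rank 2 and $B_i \sqcup B_j$ would be invariant under it, giving a valid decomposition of $B$ with $k-1$ pieces to which the inductive hypothesis would apply.

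The main step is a saturation argument isolating $B_1$. Define
\[
  E^{\mathrm{sat}} := E \setminus \bigcup_{i=2}^{k} \Lambda_1 \cdot B_i,
\]
where $\Lambda_1 \cdot B_i := \bigcup_{v \in \Lambda_1} v B_i$ denotes the $\Lambda_1$-orbit. Each $\Lambda_1 \cdot B_i$ is invariant under $\Lambda_1 + \Lambda_i$, which has rank 3 since the planes are distinct, so $\Lambda_1 \cdot B_i$ and hence $E^{\mathrm{sat}}$ are 3-periodic. The disjointness of the $B_j$ combined with the $\Lambda_1$-invariance of $B_1$ forces $B_1 \cap (\Lambda_1 \cdot B_i) = \emptyset$ for every $i \geq 2$: if $y$ lay in $B_1 \cap v B_i$ with $v \in \Lambda_1$, then $(-v) y$ would be both in $B_i$ (from $y \in v B_i$) and in $B_1$ (from the $\Lambda_1$-invariance of $B_1$), contradicting $B_1 \cap B_i = \emptyset$. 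Hence $B_1 \subseteq E^{\mathrm{sat}}$, while $B_i \subseteq \Lambda_1 \cdot B_i \subseteq E \setminus E^{\mathrm{sat}}$ for each $i \geq 2$.

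The induction now closes via the clean decomposition
\[
  E \setminus B \;=\; \bigl(E^{\mathrm{sat}} \setminus B_1\bigr) \;\sqcup\; \bigl((E \setminus E^{\mathrm{sat}}) \setminus (B_2 \sqcup \cdots \sqcup B_k)\bigr).
\]
The first summand is 2-periodic by the Key Fact. For the second, $E \setminus E^{\mathrm{sat}} = \bigcup_{i \geq 2} \Lambda_1 \cdot B_i$ is 3-periodic and $B_2 \sqcup \cdots \sqcup B_k$ is a 2-weakly periodic subset of it with only $k-1$ pieces, so the inductive hypothesis applies and makes this summand 2-weakly periodic. Their disjoint union is therefore 2-weakly periodic. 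The core device driving the argument is the saturation $E^{\mathrm{sat}}$: by 3-periodically enlarging each $B_i$ (for $i \geq 2$) to $\Lambda_1 \cdot B_i$ and excising the result, one obtains a 3-periodic region containing exactly one of the pieces of $B$, which reduces the problem to the Key Fact on one side and to a strictly smaller inductive instance on the other.
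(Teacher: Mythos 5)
Your proof is correct (modulo the usual caveat that all periodicities in this paper are only almost-everywhere invariances, a caveat that lurks in the paper's own argument as well), and it takes a genuinely different route from the paper. The paper's proof, following \cite[Lemma~5.1]{bhattacharya_tilings}, assumes $k$ minimal so that each $\Theta_i+\Theta_j$ has finite index, builds a finite-index subgroup $\Gamma=\bigcap_{i\neq j}(\Gamma_0\cap\Theta_i+\Gamma_0\cap\Theta_j)$, and decomposes $E$ into the finitely many $\Gamma$-ergodic components (finiteness uses the ergodicity of the $\Z^3$-action); it then shows each component meets at most one $B_i$ in positive measure, so on each component $E\setminus B$ coincides a.e.\ with $Q\setminus B_i$, which is $l\Theta_i$-invariant. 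Your argument replaces the passage to ergodic components with the explicit saturation $E^{\mathrm{sat}}=E\setminus\bigcup_{i\geq 2}\Lambda_1\cdot B_i$ and an induction on $k$, splitting $E\setminus B$ as $(E^{\mathrm{sat}}\setminus B_1)\sqcup\bigl((E\setminus E^{\mathrm{sat}})\setminus(B_2\sqcup\cdots\sqcup B_k)\bigr)$. This buys two things: it is purely combinatorial, avoiding the ergodic decomposition entirely, and as a consequence it never invokes the ergodicity hypothesis, so it proves a slightly stronger statement. Both arguments ultimately exploit the same geometric fact --- after normalization the rank-$2$ groups $\Lambda_1,\dots,\Lambda_k$ span pairwise distinct planes, so $\Lambda_i+\Lambda_j$ has finite index --- the paper through ergodic components, you through the $\Lambda_1$-orbit saturation. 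The one point you should make explicit is that since $B_1$ is only a.e.\ $\Lambda_1$-invariant, the claimed disjointness $B_1\cap(\Lambda_1\cdot B_i)=\emptyset$ should be read as $\nu\bigl(B_1\cap(\Lambda_1\cdot B_i)\bigr)=0$: for fixed $v\in\Lambda_1$ the set $\{y\in B_1\cap vB_i\}$ is contained in the null set $B_1\setminus(vB_1)$, and the countable union over $v\in\Lambda_1$ is still null. This is exactly what the mod-null-sets notion of $2$-weak periodicity requires, so the induction closes.
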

\begin{proof}
	Let $B=B_1\sqcup \cdots \sqcup B_k$ be a $2$-weakly periodic decomposition of $B$ and let $\Theta_1, \ldots, \Theta_k$ be rank-$2$ subgroups of $\Z^3$ such that each $B_i$ is $\Theta_i$-invariant.
	We may assume that $k$ is the smallest positive integer satisfying the above.
	Thus $\Theta_i+\Theta_j$ is a finite index subgroup of $\Z^3$ whenever $i\neq j$.
	Let $\Gamma_0$ be a finite index subgroup of $\Z^3$ such that $E$ is $\Gamma_0$-invariant.
	Define $\Gamma_{ij} = \Gamma_0\cap \Theta_i+\Gamma_0\cap \Theta_j$ whenever $i\neq j$, and note that each $\Gamma_{ij}$ is a finite index subgroup of $\Z^3$.
	Define $\Gamma= \bigcap_{i\neq j}\Gamma_{ij}$.
	Note that $E$ is partitioned by the $\Gamma$-ergodic components contained in $E$ since $E$ is $\Gamma$-invariant.

	We will show that for every $\Gamma$-ergodic component $Q$ of $(X, \mu)$, the set $Q\cap (E\setminus B)$ is $2$-periodic.
	Indeed, fix a $\Gamma$-ergodic component $Q$ such that $Q\cap B$ has positive measure.
	Then $Q$ must be contained in $E$.
	Suppose there exist $i, j\in \set{1, \ldots, k}$, $i\neq j$, such that $Q\cap B_i$ and $Q\cap B_j$ both have positive measure.
	Let $S=\bigcup_{g\in \Gamma_{ij}} g(Q\cap B_j)$.
	Then $S$ is $\Gamma_{ij}$-invariant and hence $\Gamma$-invariant.
	Thus $S$ must contain $Q$.
	But note that if $g\in \Gamma_{ij}$, then $g$ can be written as $g=g_i+g_j$ for some $g_i\in \Gamma_0\cap \Theta_i$ and $g_j\in \Gamma_0\cap \Theta_j$.
	Therefore
	\begin{equation}
		g(Q\cap B_j) = (g_i+g_j)(Q\cap B_j) = g_i(g_j(Q\cap B_j)) \subseteq g_i(g_jB_j) \subseteq g_iB_j
	\end{equation}
	Now since $B_i$ and $E$ are both $g_i$-invariant, and $B_j\subseteq E$ is disjoint with $B_i$, we must have $g_iB_j\subseteq E\setminus B_i$.
	This means that the set $S$ is disjoint with $B_i$, contradicting the fact that $S$ contains $Q$.

	So we see that each $\Gamma$-ergodic component can intersect at most one of the $B_i$'s in a positive measure set.
	Finally, if $Q$ is a $\Gamma$-ergodic component intersecting $B_i$ in a positive measure set, and $l$ is a positive integer such that $l\Theta_i\subseteq \Gamma$, then $Q$ and $B_i$ are both $l\Theta_i$-invariant.
	Thus	$Q\setminus B = Q\setminus B_i$ is also $l\Theta_i$-invariant, and is hence $2$-periodic.
\end{proof}

By translating $F$ if necessary, we can manage that $0\in F$ along with the property that the sum of any set of nonzero vectors in $F$ is nonzero.
With this in mind we now finish the proof of the $1$-weak periodicity of $A$, thereby establishing the existence of a $1$-weakly periodic $F$-tiling.

\begin{lemma}
	Let $\Gamma$ be as in Lemma \ref{lemma:almost achieving weakly periodic decomposition}.
	If $A\cap E$ is not $2$-weakly periodic for some $\Gamma$-ergodic component $(E, \mu_E)$ of $\mu$, then $A\cap E$ is $1$-periodic.
\end{lemma}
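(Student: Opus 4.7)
The plan begins by identifying the precise situation forced on us. Since $A \cap E$ is not $2$-weakly periodic, in particular it is not $2$-periodic, so Lemma \ref{lemma:almost achieving weakly periodic decomposition} gives $\mu_E(A \cap E) = 1/2$, and inspecting that lemma's proof shows we must be in its Case~1: the set $I = \set{i : \overline{f^{\Lambda_i}}_*\mu_E \text{ is the Haar measure on } \R/\Z}$ is nonempty. Writing $\varphi_i = f^{\Lambda_i}$, and absorbing the (constant) values of $f_0|_E$ and of $\varphi_j|_E$ for $j \notin I$ into a single constant $c$, the decomposition from Lemma \ref{lemma:very weakly periodic decomposition theorem} becomes on $E$
\[
f|_E = c + \sum_{i \in I}\varphi_i,
\]
with each $\varphi_i$ equidistributing uniformly on $[0,1]$ and $\sum_{i \in I}\varphi_i$ forced by $f \in \set{0,1}$ to take only the two values $-c$ and $1 - c$.

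The key step is a short invariance argument. Fix $i^* \in I$ and $g \in \Gamma \cap \bigcap_{i \in I,\, i \neq i^*}\Lambda_i$. Applying $g$ to the displayed identity and using that $g$ fixes $f_0$ (as $g \in \Gamma$) and each $\varphi_i$ for $i \neq i^*$ (as $g \in \Lambda_i$) gives
\[
g\cdot\varphi_{i^*} - \varphi_{i^*} \;=\; g\cdot f|_E - f|_E \;\in\; \set{-1, 0, 1},
\]
where the containment uses $f \in \set{0,1}$. Because $\varphi_{i^*}$ is $[0,1]$-valued and equidistributes with respect to Lebesgue measure, the events $\set{\varphi_{i^*} = 0}$ and $\set{\varphi_{i^*} = 1}$ have zero measure, so both sides vanish almost everywhere. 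Hence $f|_E$ is invariant under $\Gamma \cap \bigcap_{i \neq i^*}\Lambda_i$ for every $i^* \in I$, and thus under the subgroup $K := \sum_{i^* \in I}\bigcap_{i \in I,\, i \neq i^*}\Lambda_i$, which has the same rank as its intersection with $\Gamma$.

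A short case analysis on $|I|$ dispatches the small cases. The case $|I| = 1$ is impossible because an equidistributed $\varphi_{i^*}$ cannot be $\set{-c, 1-c}$-valued. If $|I| = 2$, condition (d) of Lemma \ref{lemma:choosing planes intersection and all that} forces $K = \Lambda_{i_1} + \Lambda_{i_2}$ to have rank $3$, which would make $f|_E$ constant on $E$ and contradict $\mu_E(A \cap E) = 1/2$; so $|I| \geq 3$. If $|I| = 3$, then $K$ is the sum of the three rank-$1$ pairwise intersections $\Lambda_i \cap \Lambda_j$ (by condition (d) again), so $K$ has rank at least $1$; rank $3$ is ruled out as above, and rank $2$ is ruled out because it would make $A \cap E$ a $2$-periodic set, contradicting the hypothesis. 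Thus $K$ has rank exactly $1$ and $A \cap E$ is $1$-periodic.

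The main obstacle is $|I| \geq 4$: in generic position the intersections $\bigcap_{i \neq i^*}\Lambda_i$ of three or more rank-$2$ subgroups may all be trivial, in which case $K$ itself is trivial and the invariance argument above is vacuous. To handle this I plan to iterate the same invariance idea applied to partial sums of the form $\varphi_i + \varphi_j$, which are $\Lambda_i \cap \Lambda_j$-invariant (rank $1$), and to combine the resulting partial invariances with the polynomial structure of $\overline{\varphi_i}$ on cosets (from Lemma \ref{lemma:unipotent two dimensions} and Corollary \ref{corollary:annihilated by a polynomial unipotent}) together with the pushed-down tiling recurrence $\sum_{g \in F} g \cdot \varphi_i = 1$ on $X/\Lambda_i$, so as to reduce to an effective $|I| \leq 3$. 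Carrying out this reduction while preserving both equidistribution and the two-value constraint is the delicate step.
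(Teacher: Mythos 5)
Your approach is genuinely different from the paper's, and it has a real gap that you yourself flag. The paper's proof does not analyze the structure of the decomposition $f = f_0 + \sum_i f^{\Lambda_i}$ at all; instead it exploits the tiling structure directly. From $\mu_E(A\cap E)=1/2$ and the partition $X = \bigsqcup_{g\in F}gA$, it deduces that some nonzero $b_1\in F$ has $(b_1A)\cap E$ also not $2$-weakly periodic, so $E = (A\cap E)\sqcup(b_1A\cap E)$ with both pieces of measure $1/2$. Translating by $-b_1$ and iterating produces, for every $n$, elements $b_1,\ldots,b_{2n}\in F\setminus\{0\}$ with $A\cap(\bar b_1+\cdots+\bar b_{2n})E = (\bar b_1+\cdots+\bar b_{2n})(A\cap E)$. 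Since there are only finitely many $\Gamma$-ergodic components, a pigeonhole collision gives a nonzero vector translating $A\cap E$ to itself (after translating $F$ so that nonempty sums of nonzero elements are nonzero). No case analysis on how many $\varphi_i$ equidistribute is needed.

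Your plan, by contrast, works with the decomposition $f|_E = c + \sum_{i\in I}\varphi_i$ and tries to extract invariance of $f|_E$ under $K = \sum_{i^*\in I}\bigcap_{i\in I, i\neq i^*}\Lambda_i$ via the two-valued constraint on $f$ and the equidistribution of the $\varphi_{i^*}$. This is sound for $|I|\le 3$ (modulo two points you glide over: (i) the claim that $\varphi_j|_E$ is constant for $j\notin I$ requires observing that if $\overline{\varphi_j}_*\mu_E=\delta_0$ then Subcase 2.1 of Lemma \ref{lemma:almost achieving weakly periodic decomposition} already makes $A\cap E$ $2$-periodic, contradicting the hypothesis, so the Dirac is at a nonzero point and $\varphi_j|_E$ is a genuine constant in $(0,1)$; (ii) in your $|I|=2$ case, the conclusion is not that $f|_E$ is constant on $E$ but that $A\cap E$ is $3$-periodic, hence $2$-periodic, which is what produces the contradiction). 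But the genuine gap is exactly what you identify: for $|I|\ge 4$ the group $K$ can be trivial, and the partial-sum iteration you sketch is not carried out. Without it the argument does not close, and there is no a priori bound forcing $|I|\le 3$. The paper's route sidesteps this entirely by never needing to bound $|I|$: it only needs the tiling equation and the finiteness of the set of $\Gamma$-ergodic components. If you want to pursue your route, you would need to either prove a bound on $|I|$ or make the reduction in your final paragraph precise; at present the proof is incomplete.
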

\begin{proof}
	(See \cite[Proof of Lemma 1.3, pg 13]{bhattacharya_tilings})
	For any vector $v\in \Z^3$, we will write $\bar v$ to denote $-v$.
	Let $E$ be a $\Gamma$-ergodic component of $X$ such that $A\cap E$ is not $2$-weakly periodic.
	Then $\mu_E(A\cap E) = 1/2$.
	Now we have
	\begin{equation}
		E\setminus (A\cap E) = \bigsqcup_{g\neq 0, g\in F} (gA)\cap E
	\end{equation}
	since $\set{gA:\ g\in F}$ is a partition of $X$.
	If each $(gA)\cap E$ is $2$-weakly periodic then so is $E\setminus (A\cap E)$, and hence, by Lemma \ref{lemma:complement of weakly periodic is weakly periodic}, $A\cap E$ is also $2$-weakly periodic, contrary to our assumption.
	Thus there is $b_1\in F\setminus \set{0}$ such that $(b_1 A)\cap E$ is not $2$-weakly periodic.
	This implies that $A\cap \bar b_1E$ is also not $2$-weakly periodic, and hence 
	$$
	1/2
	=
	\mu_{\bar b_1 E}(A\cap \bar b_1E)
	=
	\mu_E((b_1A) \cap E)
	$$
	So we have $E=(A\cap E)\sqcup ((b_1A)\cap E)$, which implies that
	$$
	\bar b_1 E = (A\cap \bar b_1E) \sqcup (\bar b_1(A\cap E))
	$$
	As noted earlier, we have $A\cap \bar b_1E$ is not $2$-weakly periodic.
	By the same argument as above applied to $A\cap \bar b_1E$, we deduce that there is $b_2\in F\setminus\set{0}$ such that $\bar b_1E = (A\cap \bar b_1 E) \sqcup ((b_2 A) \cap (\bar b_1 E))$, which shows that
	\begin{equation}
		(b_2A)\cap \bar b_1E = \bar b_1(A\cap E)
	\end{equation}
	and hence $A\cap (\bar b_1+\bar b_2)E = (\bar b_1+\bar b_2) (A\cap E)$.
	Thus, continuing this way, for each $n\geq 1$ we can find $b_1, \ldots, b_{2n}\in F\setminus \set{0}$ such that
	\begin{equation}
		A\cap (\bar b_1+ \cdots + \bar b_{2n})E = (\bar b_1+ \cdots + \bar b_{2n})(A\cap E)
	\end{equation}
	Since there are only finitely many $\Gamma$-ergodic components, we see that there must exist natural numbers $m$ and $n$ with $m<n$ such that
	\begin{equation}
		(\bar b_1+ \cdots +\bar b_{2m}) E = (\bar b_1+ \cdots +\bar b_{2n}) E
	\end{equation}
	and hence
	\begin{equation}
		(\bar b_1+ \cdots +\bar b_{2m}) (A\cap E) = (\bar b_1+ \cdots +\bar b_{2n}) (A\cap E)
	\end{equation}
	giving
	\begin{equation}
		(\bar b_{2m+1} + \cdots +\bar b_{2n}) (A\cap E) = A\cap E
	\end{equation}
	and hence, since $\bar b_{2m+1} + \cdots + \bar b_{2n}$ is nonzero, $A\cap E$ is $1$-periodic.
	This proves the lemma.
\end{proof}

\begin{corollary}
	Then $A$ is $1$-weakly periodic.
\end{corollary}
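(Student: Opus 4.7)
The plan is to combine the previous lemma with Lemma \ref{lemma:almost achieving weakly periodic decomposition} and then apply the correspondence principle (Lemma \ref{label:bhattacharya correspondence principle}).

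First, I would observe that $\Gamma$, being a finite index subgroup of $\Z^3$, has only finitely many cosets, and hence there are only finitely many $\Gamma$-ergodic components of $(X,\mu)$. Let $E_1,\ldots,E_r$ be an enumeration of them, and note that $X=\bigsqcup_{j=1}^r E_j$, so in particular $A=\bigsqcup_{j=1}^r (A\cap E_j)$.

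Next, for each fixed $j$, I would show that $A\cap E_j$ is $1$-weakly periodic. By the previous lemma, there are two cases: either $A\cap E_j$ is $2$-weakly periodic, or it is $1$-periodic. In the second case there is nothing to do, since a $1$-periodic set is trivially $1$-weakly periodic (take the whole set as the single block of the partition). In the first case, write $A\cap E_j = B_1\sqcup\cdots\sqcup B_k$ where each $B_i$ is $2$-periodic; since any rank-$2$ subgroup of $\Z^3$ contains nonzero vectors, each $B_i$ is in particular $1$-periodic, so this same partition witnesses $1$-weak periodicity of $A\cap E_j$.

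Combining the finitely many partitions of the $A\cap E_j$ (with the extra $\Gamma$-ergodic component splitting built into a single global finite partition) yields a finite partition of $A$ into $1$-periodic measurable sets, i.e. $A$ itself is $1$-weakly periodic as a measurable subset of $X$. Finally, by Lemma \ref{label:bhattacharya correspondence principle} applied with $i=1$ to the case of $1$-weak periodicity, $\mu$-almost every $x\in X$ is a $1$-weakly periodic $F$-tiling, so in particular such a tiling exists, completing the proof of Theorem \ref{theorem:bhattacharyas proof in three dimensions} and hence of Theorem \ref{theorem:prime square theorem}.

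There is no real obstacle here; the corollary is essentially a bookkeeping step which glues together the two dichotomies already established (Lemma \ref{lemma:almost achieving weakly periodic decomposition} and the preceding lemma) across the finitely many $\Gamma$-ergodic components, and then invokes the correspondence principle to pass from an almost-everywhere statement in $X$ back to an actual $F$-tiling of $\Z^3$.
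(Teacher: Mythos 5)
Your proposal is correct and matches the paper's (very terse) proof: you partition $A$ over the finitely many $\Gamma$-ergodic components, use the preceding lemma together with Lemma \ref{lemma:almost achieving weakly periodic decomposition} to see that each $A\cap E_j$ is either $2$-weakly periodic or $1$-periodic, note that both cases yield $1$-weak periodicity, and glue. The closing appeal to Lemma \ref{label:bhattacharya correspondence principle} to pass from the set $A$ to an actual $F$-tiling is also exactly how Theorem \ref{theorem:bhattacharyas proof in three dimensions} is meant to be concluded.
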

\begin{proof}
	Let $\Gamma$ be as in Lemma \ref{lemma:almost achieving weakly periodic decomposition} and apply Lemma \ref{lemma:complement of weakly periodic is weakly periodic}.
\end{proof}

\bibliographystyle{abbrv}
\bibliography{UBOP.bib}

\end{document}